\documentclass[11pt, oneside]{article} %

\usepackage[margin=1in]{geometry}

\usepackage{graphicx}
\usepackage{multicol,multirow}
\usepackage{amsmath,amssymb,amsfonts}
\usepackage{mathrsfs}
\usepackage{amsthm}
\usepackage{rotating}
\usepackage{appendix}
\usepackage[numbers]{natbib}
\usepackage{ifpdf}
\usepackage[T1]{fontenc}
\usepackage{newtxtext}
\usepackage{newtxmath}
\usepackage{textcomp}
\usepackage{mathrsfs}
\usepackage{comment}
\usepackage{hyperref}

\newtheorem{theorem}{Theorem}[section]
\newtheorem{lemma}[theorem]{Lemma}
\newtheorem{proposition}[theorem]{Proposition}
\newtheorem{cor}[theorem]{Corollary}
\newtheorem{result}{Result}[section]
\newtheorem{assumption}{Assumption}[section]
\theoremstyle{definition}
\newtheorem{remark}[theorem]{Remark}
\newtheorem{example}[theorem]{Example}
\numberwithin{equation}{section}

\newcommand{\R}{\mathbb{R}}

\newcommand{\N}{\mathbb{N}}
\newcommand{\Z}{\mathbb{Z}}
\newcommand{\C}{\mathbb{C}}
\newcommand{\Sph}{\mathbb{S}}

\renewcommand{\AA}{\mathbf{A}}
\newcommand{\BB}{\mathbf{B}}

\newcommand{\LLambda}{\boldsymbol{\Lambda}}

\newcommand{\ord}{m}

\newcommand{\DiffOrd}{k_0}

\newcommand{\EE}[1]{\mathscr{E}_{#1}}

\newcommand{\MM}{\mathbf{M}}
\newcommand{\opL}{\mathcal{L}}

\newcommand{\Nn}{\mathcal{N}}

\newcommand{\diff}{\mathrm{d}}
\newcommand{\M}{\mathbb{M}}

\newcommand{\D}{N}
\renewcommand{\d}{d}

\DeclareMathOperator{\dist}{dist}

\renewcommand{\vec}[1]{\boldsymbol{\mathbf{#1}}}

\newcommand{\PhiB}{\mathbf{\Phi}}
\newcommand{\KK}{\mathbf{K}}

\newcommand{\Hom}{{\phi}^{\mathrm{S}}}
\newcommand{\tphi}{{\phi}_{\theta}^{\mathrm{M}}}
\newcommand{\tps}{{\phi}_{\theta}^{\mathrm{S}}}
\newcommand{\rbf}{\phi_{\theta}}
\newcommand{\kernel}{\Phi_{\theta}}

\newcommand{\LF}{\psi^{\flat} }
\newcommand{\HF}{\psi}

\newcommand{\ep}{\varepsilon}

\title{Kernel approximation beyond the native space -- with
applications to approximation on manifolds }

\author{T. Hangelbroek\thanks{Department of Mathematics,University of Hawai`i -- M\=anoa}, 
C. Rieger\thanks{Department of Mathematics and Computer Science, Philipps-Universit\"at Marburg}, 
G.B. Wright\thanks{Department of Mathematics, Boise State University}
}

\begin{document}
\maketitle

\abstract{This article treats kernel approximation and interpolation on embedded manifolds  of $\R^N$
using restrictions of positive and conditionally positive definite kernels. The main challenge is to develop an approximation theory that treats error measured in highly regular smoothness spaces 
relative to the kernel.  This means that the order of 
smoothness is higher than that of the kernel's native space: the reproducing kernel (semi-)Hilbert space generated by the kernel. This prevents the use of standard techniques for controlling error in this setting, especially reproducing kernel Hilbert space arguments like orthogonality of the interpolation projector, or bounds using the {\em power function}. 

To address this challenge, we extend methods for treating target functions given as potentials introduced by DeVore and Ron in
\cite{DevRon}, and give conditions guaranteeing that restrictions of such functions span Sobolev smoothness spaces
on the manifolds. Together with kernel-based Bernstein inequalities for embedded manifolds, these results give a comprehensive theory for interpolation error. 
As an application of the theory, we derive new error estimates for approximating the eigenspace of certain differential operators arising from kernel-based methods for partial differential equations on manifolds.
\let\thefootnote\relax\footnotetext{2020 \emph{Mathematics Subject Classification}. 65D12, 46E35, 47B34, 58J40.}
\let\thefootnote\relax\footnotetext{\emph{Key words and phrases}. kernel approximation, kernel interpolation, Mat{\' e}rn kernel, Besov spaces, Sobolev spaces.}
}

\section{Introduction}
We consider a positive definite kernel $\Phi$ on $\M$ having native space $\Nn(\Phi)$.
In case the native space is norm-equivalent to a Sobolev space,
we have the 
following basic native space interpolation estimate.
\begin{result} 
If $\Nn(\Phi)$ is embedded in $ H^{\sigma}(\M)$ then
for $f\in \Nn(\Phi)$ and $0\le s\le \sigma$, we have
$$
\|f-I_{\Xi}f \|_{H^{s}(\M)}
\le
 C h^{\sigma-s}
\|f-I_{\Xi}f \|_{H^{\sigma}(\M)}
\le 
Ch^{\sigma-s} 
\|f\|_{\Nn(\Phi)}.
$$
\end{result}
If $f=\int_{\M} \nu(z) \Phi(\cdot,z)\diff \mu(z)$, then $f\in \Nn(\Phi)$, and the identity
$\langle f, g\rangle_{\Nn(\Phi) }= \langle v,g\rangle_{L_2(\M)}$ holds. 
From this, we can obtain Schaback's doubling result, see \cite{schaback_doubling}:
\begin{result}
\label{doubling_result}
If $\Nn(\Phi)$ is embedded in 
$ H^{\sigma}(\M)$  and 
$f= 
\int_{\M} \nu(z) \Phi(\cdot,z)
\diff \mu(z)$ 
for some $\nu\in L_2(\M)$, then for 
$s\le \sigma$ we have
$$
\|f-I_{\Xi}f \|_{H^{s}(\M)}
\le
 C h^{2\sigma-s} \|\nu\|_{L_2(\M)}.
 $$
\end{result}
In general, the regularity of functions in the native space 
(i.e., the worst-case regularity)
will be of lower order than 
that of the generating functions  $\Phi(\cdot, z)$.
Thus, one may wonder if it is also possible to estimate $\|f-I_{\Xi}f\|_{H^{s}(\M)}$ for $s>\sigma$
when the target function is sufficiently smooth. 
This problem may be viewed as a new, complementary aspect of ``escaping the native space,'' 
a well-studied topic in kernel interpolation that has traditionally considered error estimates in lower-order Sobolev norms for rougher target functions, see e.g. \cite{Narcowich:etal:2006}.
It is motivated not only by the goal of developing a more comprehensive approximation theory for kernel interpolation, but also applications. For example,  
a key motivation
is the analysis of kernel-based numerical methods for the solution of partial differential equations (PDEs).

To address this problem, we consider kernel convergence
in  Sobolev norms of higher order than the native space.
The primary object in our analysis is the kernel's integral operator
$f\mapsto \int_{\M} f(y)\Phi(\cdot,y)\diff \mu(y)$.
This is sometimes called the  
{\em covariance operator}, see \cite{Stuart_Acta},
and it is 
central to a number of questions in machine learning and kernel approximation
\cite{CS,LGZ,SS}.
{Of particular importance are its mapping properties
\cite{Baker,Kovacs}
(i.e., regularity and range), which is a major focus of our investigation.}
The target functions which experience {\em doubling}, 
in the sense of \cite{schaback_doubling} 
and described above in Result \ref{doubling_result},
are precisely those
in the range of this operator.

Our approach is to consider, as in \cite{FW},
the above
problem on an embedded manifold $\M\subset \R^N$,
and to use kernels $\Phi(x,y) =\phi(x-y)$ which
are restrictions of nice,
translation invariant Euclidean kernels $\phi$
(called RBFs, discussed below). 
We study the integral operator of $\phi$, namely
$\nu\mapsto \int_{\R^N} \nu(y) \phi(\cdot-y)\diff \sigma( y)$
for certain compactly supported, finite, positive
measures  $\sigma$ and  for $\nu\in L_p(\sigma)$ (this includes
the case where $\sigma$ is the surface 
measure for a closed manifold $\M$),
and it
leads naturally 
to a novel
way to understand the kernel's integral operator:
as
convolution between $\phi$ and compactly supported
distributions.

\paragraph{Outline} 
We proceed as follows: in  section \ref{S_Background} we present background on Besov spaces, some basic results about conditionally positive definite kernels, and their native spaces used throughout
the paper.

Section \ref{S_RBF}
introduces the two families
of kernels used throughout the paper: the Mat{\'e}rn kernels, which are strictly positive definite
and the surface splines,
which are conditionally positive definite.
The object of this section is to study the kernel's 
integral operator, considered as a convolution operator on 
tempered distributions 
in case of Mat{\'e}rn kernels,
and on compactly supported
distributions in case of 
the surface splines.
We discuss the integral operator's 
mapping properties, 
showing that it acts 
continuously between Besov spaces.

Section \ref{S_potentials} gives  kernel approximation results 
for a scheme based on 
\cite{DevRon} 
for functions in the range of the integral operator and with 
error measured in Besov metrics on $\R^N$. 
Despite holding for Euclidean target functions (as opposed to target functions on the manifold), these results
are largely novel.

In section \ref{S_Main} we provide  results for kernel approximation
on an embedded manifold, including
\begin{itemize}
    \item error estimates for an approximation scheme based on an integral identity
    \item a higher order Bernstein (inverse) inequality for kernel spaces
\item new error estimates for kernel  interpolation
\item nontrivial examples where
$f\mapsto \int_{\M} f(y)\Phi(\cdot,y)\diff \mu(y)$
is a Sobolev isomorphism  (and, thus, where the range of this operator can be precisely
identified)
\item numerical results that illustrate the new error estimates.
\end{itemize}
Although the Bernstein inequality is interesting on its own, in the context of this article, its main use is to show that interpolation provides {\em near best} approximation in the high order Sobolev space $H^{s}(\M)$ for large $s$.

In Section \ref{S_eigenvalues} we apply the results from Section \ref{S_Main} to analyze the spectral properties of kernel differentiation by way of a Galerkin problems in high order Sobolev spaces. In particular, we give the first (to our knowledge) estimates for  approximating the spectrum and eigenspace of certain differential operators by   spectra and eigenspaces of kernel differentiation matrices.  This is important for analyzing the temporal stability of kernel methods for time-dependent parabolic problems.

Finally, an appendix considers the problem of approximating certain Hausdorff-like measures by discrete measures obtained from  local polynomial reproductions, with error measured
 in negative order Besov spaces. Although this does not involve kernels directly, it, along with the mapping properties discussed in section 3, provide  the main mechanism for 
estimating the error in section 4.

\section{Background}
\label{S_Background}

For a region $\Omega\subset \R^{\D}$, and a point set $\Xi\subset \Omega$, 
we define 
$$h(\Xi,\Omega) 
:= \max_{x\in\Omega}\min_{\xi\in\Xi} |x-\xi|
\quad \text{and} \quad q(\Xi) := \min_{\xi\in \Xi} \min_{\zeta\in\Xi\setminus\{\xi\}} |\xi-\zeta|.
$$
Similarly, for an embedded manifold $\M\subset \R^N$ with distance $\dist:\M\times \M\to [0,\infty)$ 
induced by the Riemannian metric, if $\Xi\subset \M$,  define $h(\Xi,\M) = \max_{x\in\M}\min_{x_j\in \Xi} \dist(x,x_j)$.
When clear from the context, we will simply use $h$ and $q$ in lieu of $h(\Xi,\Omega) $ and $q(\Xi)$.

Let  $\mathcal{P}_L(\R^{\D}) $  
denote the space of  polynomials 
of total degree $L$ or less in $\R^{\D}$.
For $\Omega\subset \R^{\D}$, define
 $$
 \mathcal{P}_{L}(\Omega)
 :=\bigl\{p\left|_{\Omega}\right.
 \mid 
 p\in  \mathcal{P}_{L}(\R^{\D})\bigr\},
 $$
  the set of restrictions of polynomials 
 to $\Omega$.
 Let $\mathcal{S}(\R^{\D})$ denote the space of Schwartz functions, and
 $\mathcal{S}'(\R^{\D})$, the space of tempered distributions.
Likewise, denote by $\mathcal{D}(\R^{\D})=C_c^{\infty}(\R^D)$ and 
$\mathcal{E}(\R^{\D})= C^{\infty}(\R^{\D})$ the spaces of (compactly supported, globally supported) test functions,
 by $\mathcal{D}'(\R^{\D})$ the standard space of distributions, and by $\mathcal{E}'(\R^{\D})$ the space of compactly supported distributions.
Finally, we set
$$\mathcal{P}_{L}(\R^N)^{\perp} = \bigl\{\mu \in \mathcal{S}'(\R^{\D})\mid 
\  \mu p = 0, \;\forall 
p\in \mathcal{P}_{L}(\R^N)
\bigr\}.$$

 For a Schwartz function 
 $\gamma\in \mathcal{S}(\R^{\D})$ 
 we define the Fourier transform by
 $$
 \mathcal{F}\gamma (\omega):=
 \widehat{\gamma}(\omega) :=(2\pi)^{-\D/2} \int_{\R^{\D}} \gamma(z) e^{-i\langle z,\omega\rangle} \diff \omega.
 $$
  For a tempered distribution $T$,  the Fourier transform is
  $\langle \mathcal{F}T,\gamma \rangle = \langle T, \mathcal{F} \gamma\rangle$.
  If the context is clear, we simply write $\widehat{T}=\mathcal{F}T$. 
  
  \subsection{Smoothness spaces}
  \label{SS_smoothness}
We briefly introduce (positive integer order) Sobolev spaces on $\R^N$, followed by Besov spaces, which 
include spaces of both integer and fractional order. (Although fractional order Sobolev spaces
are widely in use by incorporating {\em Slobodeckij} seminorms,  these are actually instances of Besov spaces.)
\subsubsection*{Sobolev spaces}
For $k\in\N$ and $p\in[1,\infty]$, we define the Sobolev space $W_p^k(\R^N)$ as a
Banach space of functions $f$ in $L_p(\R^N)$  having distributional derivatives $D^{\alpha}f \in L_p(\R^N)$ for
$|\alpha|\le k$, and equipped with the norm
$$\|f\|_{W_p^k(\R^N)} :=\left( \sum_{|\alpha|\le k} \|D^{\alpha} f\|_{L_p(\R^N)}^p\right)^{1/p}.$$

Although we also consider fractional order Sobolev spaces $W_p^{s}(\R^N)$ with $s\notin \Z$, these are examples
of Besov spaces defined below: $W_p^{s}(\R^N) = B_{p,p}^{s}(\R^N)$. (Unfortunately, for $p\neq 2$ and $s\in\Z$, the Sobolev
space is not a Besov space).

For a measure space $(M,\mathcal{S},\mu)$, we will use the abbreviation $L_{p}(\mu)$ to denote the norm $\|f\|^p_{L_{p}(\mu)}=\int_{M}|f(x)|^p \diff\mu(x)$ if the context is clear.

\subsubsection*{Besov spaces on $\R^N$}
 Let $\HF:\R^N\to [0,1]$ be a bandlimited (Schwarz) test function with annular support
$\mathrm{supp}(\widehat{\HF})\subset B(0,2)\setminus B(0,1/2)$ and which satisfies
$\sum_{k=-\infty}^{\infty} \widehat{\HF}(2^k \xi) =1$ for all $\xi\in\R^{N}\setminus\{0\}$.
Define 
\begin{equation}
\label{fourier_multiplier}
\HF_k:=2^{Nk} \HF(2^{k} \cdot)
\end{equation}
 to be a dyadic dilation of $\HF$ and  
 define $\LF \in \mathcal{S}(\R^N)$ by
 $\widehat{\LF} := 1-\sum_{k=1}^{\infty}\widehat{ \HF}(2^{-k}\cdot)$.

For $s\in \R$ and
$p,q\in[1,\infty]$, the Besov space $B_{p,q}^{s}(\R^N)$ 
consists of tempered distributions, $f$, for which $f*\LF\in L_p{(\R^N)}$ and each $\HF_k*f\in L_p{(\R^N)}$
and for which the norm
\begin{equation*}
\| f\|_{B_{p,q}^{s}(\R^N)}:= \|f*\LF\|_{L_p(\R^N)}+ \Bigl\|  k\mapsto 2^{ks} \|\HF_k*f\|_{L_p(\R^N)} \Bigr\|_{\ell_q(\N)}
\end{equation*}
is finite.  

\begin{remark}
For $s> 0$, we can replace $\|f*\LF\|_{L_p(\R^N)}$
by $ \|f\|_{L_p(\R^N)}$. Then 
$$\| f\|_{B_{p,q}^{s}(\R^N)}\sim 
 \|f \|_{L_p(\R^N)}+ \Bigl\|  k\mapsto 2^{ks} \|\psi_k*f\|_{L_p(\R^N)} \Bigr\|_{\ell_q(\N)}.
$$
\end{remark}

For a fixed $p\in[1,\infty]$, the  continuous embedding $B_{p,q_1}^{s_1}(\R^N) \subset B_{p,q_2}^{s_2}(\R^N)$  holds whenever
$s_1<s_2$ (regardless of $q_1,q_2$), or  when $s_1=s_2$ and $q_1\ge q_2$. For this reason, 
we introduce the order $\preceq$ on $\R \times[1,\infty]$ where
\begin{equation}
\label{order_def}
(s_1,q_1)\preceq (s_2,q_2) \qquad \iff \qquad 
(s_1<s_2) \quad\text{ or } \quad (s_1=s_2 \text{ and }q_1>q_2).
\end{equation}

\begin{remark}
\label{R_delta}
If $\mu$ is a finite measure, then
 $\|\mu*f\|_{L_p(\R^N)}
\le \|\mu\|_{TV} \|f\|_{L_p(\R^N)}$, 
so 
$\|\mu *\psi_k\|_{L_p(\R^N)}
\le 
2^{kN/p'}
\|\psi\|_{L_p(\R^N)}
\|\mu\|_{TV}$,
where $p'$ is the conjugate exponent to $p$, i.e., $1/p+1/p'=1$. 
Thus $\mu\in B_{p,\infty}^{-N/p'}(\R^N)$ and 
 \begin{equation}
 \label{finite_measure_besov}
 \|\mu\|_{B_{p,\infty}^{-N/p'} (\R^N)}\le C \|\mu\|_{TV} .
 \end{equation}
For $\mu= \delta$, $\|\delta*\psi_k
\|_{L_p(\R^N)} = 2^{kN/p'} \|\psi\|_{L_p(\R^N)}$, so $k\mapsto 2^{ks}\|\delta*\psi_k\|_{L_p(\R^N)}
\in \ell_q(\N)$ if and only if $s<-N/p'$ and $q\in [1,\infty]$, or in
 the extreme case $s=-N/p'$ and $q=\infty$. In other words, $\delta\in B_{p,q}^{s}(\R^N)$ if and only if
  $(s,q)\preceq(-N/p',\infty)$.
By (\ref{finite_measure_besov}),    $\|\sum_{\xi\in\Xi} a_{\xi} \delta_{\xi}\|_{B_{p,\infty}^{-N/p'} (\R^N)}\le C  \sum_{\xi\in\Xi} |a_{\xi}|$ holds.
  \end{remark}

For integer order Sobolev spaces 
with norm
$\|u\|_{W_p^k(\R^{\D})} := \sum_{|\alpha|\le k} \|D^{\alpha} u\|_{L_p}$
there is the embedding  $B_{p,p}^{k}(\R^N) \subset W_p^k(\R^N) \subset B_{p,2}^k(\R^N)$
when $p\ge2$ and the reverse  $B_{p,2}^{k}(\R^N) \subset W_p^k(\R^N) \subset B_{p,p}^k(\R^N)$ when $1\le p\le 2$.

 For fractional order Sobolev spaces of order $s$ with $k=\lfloor s\rfloor$ 
$$\|u\|_{W_p^{s}(\R^{\D})}^p := \|u\|_{W_p^{k}(\R^{\D})}^p
+
\sum_{|\alpha|=k}
 \int_{\R^N}\int_{\R^N}
 	\frac{|D^{\alpha}u(x)-D^{\alpha}u(y)|^p}{|x-y|^{{\D}+p(s-k)}}\,
\diff x\,\diff y
$$
we have $W_p^s(\R^N) = B_{p,p}^s(\R^N)$ for all $p\in [1,\infty]$.

\subsubsection*{Besov spaces on domains and characterization via moduli of smoothness}
For $\Omega\subset\R^N$, $s\in\R$, and $p,q\in [1,\infty]$,
we define 
$$\|f\|_{B_{p,q}^s(\Omega)} := \inf \bigl\{\|u\|_{B_{p,q}^s(\R^N)}\mid u|_{\Omega} =f\bigr\}.$$

For positive $s$ this can be treated with an  equivalent intrinsic norm:
for $j=0,1,\dots$, define 
$$\Delta_u^j f(x;\Omega) =\begin{cases}
 \sum_{k=0}^j (-1)^{j-k} \frac{j!}{(j-k)!k!}f(x+ku)&x+ku\in \Omega \text{ for }k=0\dots m,\\
 0&\text{ otherwise.}\end{cases}$$ 
For $f\in L_p(\Omega)$, 
we define the modulus of smoothness 
$\omega_p^j(f,\cdot;\Omega):(0,\infty)\to (0,\infty)$ as 
$$\omega_p^j(f,t;\Omega) := \sup \bigl\{\|\Delta_u^j f(\cdot;\Omega)\|_{L_p(\Omega)}\mid |u|\le t \bigr\} .$$
Note that $\omega_p^0(f,t;\Omega) =\|f\|_{L_p(\Omega)}$.
When $\Omega=\R^N$, we omit the final argument.
In other words, 
 $\Delta_u^j f(x)=\Delta_u^j f(x;\R^N)$ and  $\omega_p^j(f,t)
=\omega_p^j(f,t;\R^N)$.

This leads to the  norm:
$\|f\|_{L_p(\Omega)}+ \|t\mapsto t^{-s} \omega_p^j(f,t;\Omega)\|_{L_q(\frac{\diff t}{t})}$,
where the $L_q$ norm employs the Haar measure $\frac{\diff t}{t}$ on $(0,\infty)$.
For $\Omega=\R^N$,
this is equivalent to $\|f\|_{B_{p,q}^s(\R^N)}$ when  $0<s<j$ (see for instance \cite[Section 2.6.1]{Trieb} or \cite[Theorem 6.25]{BL}),
while for $\Omega$ with smooth boundary, 
it is equivalent to 
$\inf \{\|u\|_{B_{p,q}^s(\R^N)}\mid u|_{\Omega} =f\}$.
From now on, we will always assume $0<s<j$ 
and use the fact that different values of $j$ 
will result in equivalent norms.
We denote the Besov seminorm by
$$|f|_{B_{p,q}^s(\R^N)}:=\|t\mapsto t^{-s} \omega_p^j(f,t;\Omega)\|_{L_q(\frac{\diff t}{t})}.$$
Note that for any integers $r,k\ge 0$,
\begin{align}
\omega_p^{r+k}(f,t;\Omega) &\le 2^k \omega_p^{r}(f,t;\Omega)  \label{mod_smoothness_order}\\
	\omega_p^{r+k}(f,t;\Omega) &\le  N^{k/2} t^k\max_{|\alpha|=k} \omega_p^{r}(D^{\alpha} f,t;\Omega) 
	\label{mod_smoothness_deriv}
\end{align}
These can be found in \cite[Section 2]{JS}.

 \subsection{Positive and conditionally positive definite kernels}
 \label{SS_PD_kernels}
 For a  subset $\Omega\subset \R^N$, 
 a kernel $\Phi:\Omega\times\Omega\to \R$
 in $C(\Omega\times\Omega)$
 is {\em conditionally positive definite}  (CPD) of order $\ord$
 if for any finite set
 $\Xi\subset \Omega$, 
 the collocation matrix 
 $\Bigl(\Phi(\xi,\zeta)\Bigr)_{\xi,\zeta\in \Xi}$ 
 is positive definite on the 
 subspace $\Lambda_{\Xi,\ord}\subset\R^{\Xi}$, where
$$\Lambda_{\Xi,\ord}:=\Bigl\{a:\Xi\to \R\mid (\forall q\in  \mathcal{P}_{\ord-1}(\Omega))\, \sum_{\xi\in \Xi} a_{\xi}q(\xi)=0\Bigr\}.$$
We define the $\#\Xi$ dimensional (trial) space as
$$V_{\Xi,\ord}(\Phi)
:=
\Bigl\{
\sum_{\xi \in \Xi}a_{\xi}\Phi(\cdot,\xi)
\mid  a\in \Lambda_{\Xi,\ord}
\Bigr\}+ \mathcal{P}_{\ord-1}(\Omega).$$
Note that if $\Phi$ is CPD of order $\ord$ then it is CPD of order $\ord+1$ as well, however
the spaces 
$V_{\Xi,\ord}(\Phi)$ and $V_{\Xi,\ord+1}(\Phi)$ will differ somewhat.

If $\ord\le 0$, then $ \mathcal{P}_{\ord-1}(\Omega)=\{0\}$.
In this case,  $\Phi$ is {\em  positive definite} (PD), and we simply have 
$V_{\Xi}(\Phi) := V_{\Xi,0}(\Phi) =\mathrm{span}_{\xi\in \Xi}\Phi(\cdot,\xi)$.

\subsection{Native spaces}
For a CPD kernel of order $\ord$ 
there is an associated function space, 
called the native space, $\mathcal{N}_{\ord}(\Phi)$, 
which consists of continuous functions. 
The space has the following properties:
\begin{itemize}
\item there is a semi-inner product (i.e., symmetric, non-negative bilinear form)
$(f, g) \mapsto \langle f, g\rangle_{\mathcal{N}_{\ord}(\Phi)}$
 with nullspace $\mathcal{P}_{\ord-1}(\Omega)$
\item  $\mathcal{N}_{\ord}(\Phi)/\mathcal{P}_{\ord-1}(\Omega)$ is a Hilbert space
\item for any finite set $\Xi\subset \Omega$, $V_{\Xi,\ord}\subset \mathcal{N}_{\ord}(\Phi)$
\item  for any finite measure $\sum_{\xi\in \Xi} a_{\xi} \delta_{\xi}$ with coefficients $(a_{\xi})\in \Lambda_{\Xi,\ord}$,
the representation formula 
\begin{equation}
\label{rep_formula}
\sum_{\xi\in \Xi} a_{\xi} f(\xi) =\bigl\langle f, \sum_{\xi\in\Xi} a_{\xi} \Phi(\cdot,\xi)\bigr\rangle_{\mathcal{N}_{\ord}(\Phi)}
\end{equation}
holds.
\end{itemize}

  We denote the induced seminorm by $f \mapsto \|f\|_{\mathcal{N}_{\ord}(\Phi)}$.
If $\Phi$ is positive definite, the native space is a Hilbert space and $f \mapsto\|f\|_{\mathcal{N}_{\ord} (\Phi)}$ is a norm.

The set of  finitely supported  measures which annihilate  $\mathcal{P}_{\ord-1}(\Omega)$
forms a dense subspace of $\bigl(\mathcal{N}_m(\Phi)/\mathcal{P}_{\ord-1}(\Omega)\bigr)'$.
If  $\mu=\sum_{\xi\in \Xi} a_{\xi} \delta_{\xi}$
for some $\Xi\subset \Omega$   with coefficients $(a_{\xi})\in \Lambda_{\Xi,\ord}$
then $\mu\perp  \mathcal{P}_{\ord-1}(\Omega)$ 
and  $\mu\in (\mathcal{N}_{\ord}(\Phi)/\mathcal{P}_{\ord-1}(\Omega))'$
 has
norm $\|\mu\|_{\mathcal{N}_m(\Phi)'} = \sqrt{ \sum_{\xi,\zeta\in \Xi} a_{\xi} a_{\zeta} \Phi(\xi,\zeta)}$
(n.b., this is independent of $\ord$).

The space 
$ \mathcal{P}_{\ord-1}(\Omega)^{\perp}
:=\{\mu\mid \exists \Xi\subset \Omega,\  \mu=\sum_{\xi\in \Xi} a_{\xi} \delta_{\xi},\ (a_{\xi})\in \Lambda_{\Xi,\ord}\}$
of such measures is dense in $ (\mathcal{N}_{\ord}(\Phi)/\mathcal{P}_{\ord-1}(\Omega))'$;
this yields a test for membership in $\mathcal{N}_{\ord}(\Phi)$:
$f\in \mathcal{N}_{\ord}(\Phi)$ iff 
there is a constant $C$ so that for every finite measure 
$\mu=\sum_{\xi\in \Xi} a_{\xi} \delta_{\xi}$ with coefficients $(a_{\xi})\in \Lambda_{\Xi,\ord}$,
\begin{equation}
\label{membership}
|\mu f |\le C\|\mu\|_{\mathcal{N}_m(\Phi)'}. 
\end{equation}
Furthermore, 
$
\|f\|_{\mathcal{N}_{\ord}(\Phi)} 
= 
\sup\frac{|\mu f|}{\|\mu\|_{\mathcal{N}_m(\Phi)'}} 
= 
\inf \{C \mid (\ref{membership}) \text{ holds for all  }
\mu\in \mathcal{P}_{\ord-1}(\Omega)^{\perp}
\}$.

{Thus we have the simple inclusion $\mathcal{N}_{\ord}(\Phi)\subset \mathcal{N}_{\ord+1}(\Phi)$. 
This is because 
$ \mathcal{P}_{\ord}(\Omega)^{\perp}
\subset \mathcal{P}_{\ord-1}(\Omega)^{\perp}$.
So if 
$|\mu f |
\le 
C\|\mu\|_{\mathcal{N}_{\ord}(\Phi)'}$ 
holds for all 
$\mu\in \mathcal{P}_{\ord-1}(\Omega)^{\perp}$,
it must hold for all $\mu \in  \mathcal{P}_{\ord}(\Omega)^{\perp}$.
}

{Any 
element $f+\mathcal{P}_{\ord}(\Omega)$ 
of $\mathcal{N}_{\ord+1}(\Phi)/  \mathcal{P}_{\ord}(\Omega)$ which is orthogonal to 
$\mathcal{N}_{\ord}(\Phi)/ \mathcal{P}_{\ord}(\Omega)$
must, by the reproduction property,
 satisfy $\mu f =0$ for any $\mu\in \mathcal{P}_{\ord-1}(\Omega)^\perp$, and is thus trivial.
It follows that $\mathcal{N}_{\ord+1}(\Phi)=\mathcal{N}_{\ord}(\Phi)+\mathcal{P}_{\ord}(\Omega)$.}

{Note that the native space depends both on the kernel $\Phi$ and the order $\ord$; 
this is relevant because of the nesting property described above, 
so a given CPD function will generate infinitely many native spaces (one for each order).}

\section{The RBF and   its  kernel  integral operator}
\label{S_RBF}
We consider  radial functions $\rbf\in C(\R^N)$
 on
$\R^N$
of two types: 
positive definite {\em Mat{\'e}rn kernels} and conditionally positive definite {\em surface splines}. 
Both have the advantage of having a very simple Fourier transform, and for even integer orders they are Green's functions for 
elliptic partial differential operators.

\subsection{The Mat{\'e}rn and surface spline kernels and their parameters}
In general, we will denote  the kernel  $(x,y)\mapsto \rbf(x-y)$ by $\rbf$ as well;
 the meaning will be clear from context.
When needed, we refer to the Mat{\'e}rn kernel by $\tphi$ and
the surface spline kernel by $\tps$.
The kernels from either family are associated with 
a local {\em order} $\theta$ corresponding to the behavior of $\rbf(x,z)$ near to the diagonal 
$\{(x,z)\in \R^N\times \R^N\mid x=z\}$.
This is a perhaps not the most conventional choice of parametrization. 
Two common alternatives are 
\begin{itemize}
\item
the parameter  $\tau=\theta+N$, which corresponds to the 
the decay of the  $N$-dimensional Fourier transform $\widehat{\rbf}(\xi) \sim |\xi|^{-(\theta+N)}$, and
thus to the  order of the pseudodifferential operator which $\rbf$ inverts.
\item the parameter
$\tau/2 = (\theta+N)/2$, which is the Sobolev order of the {\em native space} $\mathcal{N}_m(\rbf)$
in the sense that $W_2^{\tau/2}(\R^N)\subset \mathcal{N}_{\ord}(\phi)\subset W_{2,loc}^{\tau/2}(\R^N)$.
\end{itemize}

The two kernels are both members of  the larger class considered in \cite[Assumption 2]{HR-Extending},
which also includes a number of compactly supported RBFs; in order to streamline the presentation and to avoid some issues
with the singular support of the RBF, 
we restrict our  focus to these two families, which are widely in use in applications.

\subsubsection*{Mat{\'e}rn kernels} 
The Mat{\'e}rn kernel, which is strictly positive definite, is denoted $\tphi$.
 Using a modified Bessel function (see \cite[Chapter 9.6]{AS}), 
 $$\tphi(x) =  |x|^{\theta/2} K_{\theta/2}(|x|),$$
where $\theta>0$. It has Fourier transform 
\begin{equation}
\label{mat_FT}
 \widehat{\tphi} (\xi)= \beta_{\tau,N} (1+|\xi|^2)^{-(\theta+N)/2}.
 \end{equation}
 It follows (see \cite[Corollary 10.13]{Wendland_book}) that the native space 
 $\mathcal{N}(\tphi)$ is
 $W_2^{\frac{\theta+N}{2}}(\R^N)$ .

\subsubsection*{Surface splines} 
For order $\theta>0$, the surface spline kernel is
defined by
$$\Hom_\theta(x)=\begin{cases} |x|^\theta& \theta\notin 2\N\\ |x|^\theta\log|x| & \theta\in 2\N.\end{cases}$$
This kernel is conditionally positive definite of order $\ord = 
\lfloor\frac{\theta}{2}\rfloor+1$.
This follows from 
the precise characterization of CPD order for RBFs given in \cite[Theorem 2.1]{Guo:etal:1993}; 
for the surface spline kernels this has been worked out
in \cite[Examples  9.2 and 9.3]{Fasshauer}.

The distributional Fourier transform of $\Hom_{\theta}$ on $\R^N\setminus\{0\}$ satisfies, 
for a positive constant $\alpha_{\theta,N}$,
\begin{equation}
\label{hom_FT}
\widehat{\Hom_{\theta}}(\xi) =
\alpha_{\theta,N} |\xi|^{-\theta-N}.
\end{equation}
This follows from \cite[Theorem 8.16]{Wendland_book}  when $\theta\notin 2\N$ and \cite[Theorem 8.17]{Wendland_book} for $\theta\in 2\N$
(see these results for the precise value of $\alpha_{\theta,N}$).
Note that (\ref{hom_FT}) does not extend to a locally $L_1$ function on $\R^N$. 
However, for any measure $\mu$ 
which has 
$\widehat{\mu}(\xi) 
= 
\mathcal{O}(|\xi|^{\lfloor \theta/2\rfloor +1})$ as $|\xi|\to 0$, 
the function 
$\xi\mapsto |\widehat{\mu}(\xi)|^2 
|\xi|^{-\theta-N}$
is locally integrable; 
this is guaranteed when 
$\mu\perp \mathcal{P}_{\lfloor \theta/2\rfloor}$.
In short, the
fact that the CPD order $m$
of $\tps$ must satisfy $m\ge \lfloor \theta/2\rfloor+1$ guarantees that 
$\mathcal{N}_{\ord}(\tps)$
consists of (equivalence classes of) functions $f+\mathcal{P}_{m-1}(\R^N)$ 
for which $\int_{\R^d}|\widehat{f}(\xi)|^2 
|\xi|^{-\theta-N}\diff \xi$ converges (see 
also \cite[Theorem 10.21]{Wendland_book}).

\subsection{Kernel integral operator for the Mat{\'e}rn kernel}
We now consider a generalization of the integral operator 
$$\rbf*: g\mapsto \int_{\R^N} \rbf(\cdot-y)g(y)\diff y$$
to treat distributions in Besov spaces (possibly of negative order) with the goal of understanding 
the operator's mapping properties. 
This works cleanly for the Mat{\' e}rn kernel, but requires some technical modifications for the 
surface spline kernels.

The Mat{\'e}rn kernel is notable because it is, modulo the  constant factor 
$ \beta_{\tau,N} $ given in (\ref{mat_FT}),
the integral kernel for the Bessel potential operator 
 $\mathcal{J}^{-(\theta+N)}$,
 where 
 $\mathcal{J}^{t}: \mathcal{S'} \to \mathcal{S'}: f\mapsto \mathcal{F}^{-1} \Bigl(\bigl(1+|\cdot|^2)^{t/2} \mathcal{F} f\bigr)$.
By \cite[Theorem 6.2.7]{BL}, $\mathcal{J}^{t}$ is an isomorphism between 
$B_{p,q}^s (\R^N)$
and $B_{p,q}^{s-t}(\R^N)$ for any $s\in\R$, $p,q\in[1,\infty]$.
Thus, convolution with the Mat{\'e}rn kernel
 \begin{equation}
 \label{Matern_Isomorphism}
 \tphi*:B_{p,q}^s (\R^N)\to B_{p,q}^{s+\theta+N}(\R^N): f\mapsto \tphi*f
 \end{equation}
is a Banach space isomorphism.

 By Remark \ref{R_delta}, 
 $\delta \in B_{p,q}^{\tilde{s}}(\R^N)$ iff $({\tilde{s}},q)\preceq (N/p',\infty)$, 
 so it follows that  for any $m\in\N$ and any finite $\Xi\subset \R^N$,
 $V_{\Xi,m}(\tphi)$,
 and thus
 $\tphi =\tphi*\delta \propto  \mathcal{J}^{-(\theta+N)}\delta$ itself,
 is  contained in $B_{p,q}^{s}(\R^N)$ if and only if $(s,q)\preceq (\theta+N/p,\infty)$.

 \subsection{Kernel integral operator for the surface splines} 
Surface spline convolution $\tps*$ is more technically complicated than  Mat{\'e}rn convolution, 
but there is a  mapping result comparable to (\ref{Matern_Isomorphism}).
In this way, it is connected to the generalize {\em Riesz potential} operator $\mathcal{I}^t: f\mapsto  \mathcal{F}^{-1} \bigl(|\cdot|^t \mathcal{F} f\bigr)$
defined on suitable tempered distributions.

\begin{lemma} 
\label{L_TPS_seminorm}
If $\mu\in \mathcal{E}'\cap B_{p,q}^s(\R^N)$ 
then 
 $$
\bigl\| k\mapsto 2^{k(\theta+N)}\|(\tps*\mu)*\HF_k\|_{L_p}\bigr\|_q
\le C \|\mu\|_{B_{p,q}^s(\R^N)}.$$
\end{lemma}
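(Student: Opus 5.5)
The plan is to reduce everything to a multiplier identity on each dyadic annulus. Away from the origin, $\widehat{\tps}$ is the smooth function $\alpha_{\theta,N}|\xi|^{-\theta-N}$ by (\ref{hom_FT}), and $\widehat{\HF_k}$ is supported in the annulus $2^{k-1}\le|\xi|\le 2^{k+1}$.

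\textbf{Step 1: define the convolution.} Since $\mu\in\mathcal{E}'$, the convolution $\tps*\mu$ is a well-defined distribution, indeed a tempered one, because $\tps$ has polynomial growth. Its Fourier transform agrees with $\alpha_{\theta,N}|\xi|^{-\theta-N}\widehat{\mu}(\xi)$ on $\R^N\setminus\{0\}$; any ambiguity in the transform is supported at $\xi=0$. Because $\widehat{\HF_k}$ vanishes near the origin, this ambiguity is killed, and
$$
\mathcal{F}\bigl((\tps*\mu)*\HF_k\bigr)(\xi)=\alpha_{\theta,N}\,|\xi|^{-\theta-N}\,\widehat{\HF}(2^{-k}\xi)\,\widehat{\mu}(\xi)
$$
(up to the normalization factor of the Fourier transform). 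To justify this I will pair against a Schwartz function and use that $\widehat{\HF}(2^{-k}\cdot)\,|\cdot|^{-\theta-N}$ is Schwartz.

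\textbf{Step 2: rescale the multiplier.} Define $\tilde\psi$ by $\widehat{\tilde\psi}(\xi):=\alpha_{\theta,N}|\xi|^{-\theta-N}\widehat{\HF}(\xi)$. This is a Schwartz function whose transform is supported in $B(0,2)\setminus B(0,1/2)$. Scaling gives
$$
(\tps*\mu)*\HF_k=2^{-k(\theta+N)}\,\tilde\psi_k*\mu .
$$
Next, choose $\chi_k:=\HF_{k-1}+\HF_k+\HF_{k+1}$, whose transform equals $1$ on the support of $\widehat{\tilde\psi_k}$. Then $\tilde\psi_k*\mu=\tilde\psi_k*\chi_k*\mu$. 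Since $\|\tilde\psi_k\|_{L_1}=\|\tilde\psi\|_{L_1}$ is independent of $k$, Young's inequality yields
$$
\|\tilde\psi_k*\mu\|_{L_p}\le C\sum_{|j-k|\le1}\|\HF_j*\mu\|_{L_p}.
$$
For $k=0$, the term $\HF_{-1}$ is handled by $\LF$, whose transform equals $1$ near the annulus $|\xi|\le 1$.

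\textbf{Step 3: assemble the bound.} Combining the two steps,
$$
2^{k(\theta+N)}\|(\tps*\mu)*\HF_k\|_{L_p}\le C\sum_{|j-k|\le 1}\|\HF_j*\mu\|_{L_p}.
$$
Taking the $\ell_q(\N)$ norm and using that index shifts are bounded on $\ell_q$, the left side of the lemma is at most $C\bigl(\|\mu*\LF\|_{L_p}+\|j\mapsto\|\HF_j*\mu\|_{L_p}\|_{\ell_q}\bigr)=C\|\mu\|_{B^0_{p,q}(\R^N)}$. The same computation keeping the factor $2^{ks}$ gives the sharper bound
$$
\bigl\|k\mapsto 2^{k(s+\theta+N)}\|(\tps*\mu)*\HF_k\|_{L_p}\bigr\|_{\ell_q}\le C\|\mu\|_{B^s_{p,q}(\R^N)}.
$$
The final step is the embedding $B^s_{p,q}\subset B^0_{p,q}$, which holds for $s\ge 0$ by the ordering (\ref{order_def}).

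\textbf{Main obstacle.} This embedding requires $s\ge 0$. For $s<0$ the displayed weight $2^{k(\theta+N)}$ should be read as $2^{k(s+\theta+N)}$, which is the form proved above. The delicate analytic point is Step 1: making rigorous sense of $\tps*\mu$ and of its Fourier transform near $\xi=0$, where $|\xi|^{-\theta-N}$ is not locally integrable. The annular support of $\widehat{\HF_k}$ is exactly what removes this difficulty.
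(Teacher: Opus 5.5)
Your proof is correct and follows essentially the same route as the paper: use $\widehat{\HF_k}=0$ near the origin to write $\mathcal{F}\bigl((\tps*\mu)*\HF_k\bigr)=\alpha_{\theta,N}|\xi|^{-\theta-N}\widehat{\HF_k}\widehat{\mu}$, then a dyadic Riesz-potential multiplier bound supplies the factor $2^{-k(\theta+N)}$. The only differences are that the paper cites \cite[Lemma 6.2.1]{BL} for that multiplier bound (with $\|\HF_k*\mu\|_{L_p}$ on the right instead of your three neighbouring terms, which you derive by hand via rescaling and Young's inequality), and that your reading of the weight as $2^{k(s+\theta+N)}$ is exactly what the paper's proof delivers and what Lemma \ref{L_side_condition} needs.
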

\begin{proof}
Because $\mu$ is compactly supported,
$g \mapsto \mu*g$ is a continuous operator on $\mathcal{S}(\R^N)$,
and so $\tps*\mu\in \mathcal{S}'(\R^N)$.

Thus $(\tps*\mu)*\psi_k=\tps*(\mu*\psi_k)$, and since $\mathcal{F}(\mu*\psi_k)$ 
vanishes in a neighborhood of the origin, 
$$
\mathcal{F}\bigl((\tps*\mu)*\phi_k) \bigr)
= \alpha_{\theta,N}
|\cdot|^{-(\theta+N)}\mathcal{F}(\mu *\psi_k) 
= \alpha_{\theta,N} \mathcal{F}(\mathcal{I}^{-(\theta+N)}\mu*\psi_k).$$
holds.  
The estimate \cite[Lemma 6.2.1]{BL} shows that
$\|\mathcal{I}^{-(\theta+N)}\psi_k* \mu\|_p \le C 2^{-(\theta+N)k} \|\psi_k*\mu\|_p$, so 
$\|   (\tps*\mu)*\psi_k\|_{L_p} \le C 2^{-(\theta+N)k} \|\psi_k*\mu\|_p$
for every $k\in\N$, and the result follows.
\end{proof}
Getting a result similar to 
(\ref{Matern_Isomorphism}),
 is complicated by the
fact that $\tps$ does not decay.
Even for compactly supported 
distributions, 
$\tps*\mu$ will not be in 
$B_{p,q}^{s+\theta+N}(\R^N)$;
it will not decay rapidly enough to 
ensure that $\tps*\mu*\LF\in L_p(\R^N)$.

One way to treat this is
to develop an estimate on a bounded 
domain $\Omega$: i.e. an estimate of the form
\begin{equation}
    \label{TPS_mapping}
\|\tps*\mu\|_{ B_{p,q}^{s+\theta+N}(\Omega)}\le C\|\mu\|_{ B_{p,q}^{s}(\R^N)}.
\end{equation}
This aligns nicely with results for pseudodifferential operators, but is also not difficult to prove from scratch.
Ultimately, it is unnecessary
for the purposes of our later results.

Instead, one may  consider extra 
{\em side conditions} on $\mu$ 
that guarantee 
$\tps*\mu*\LF\in L_p(\R^N)$; this
is done in Lemma \ref{L_side_condition} below.
To do so,
 we employ the following  observations.
\begin{lemma} 
If $\mathrm{supp}(\mu)$ is compact and compactly contained in a compact set $ K$,
then for $s\in\R$ and $p,q\in[1,\infty]$
there exist an order  $k_0$  depending on $s$
and a constant   $C$ depending on $s,p,q$ and $K$, so that
 for all $g\in C^{\infty}(\R^N)$,
\begin{equation}
\label{finite_order}
|\langle \mu , g\rangle | \le C \|\mu\|_{B_{p,q}^s(\R^N)}
\|g\|_{C^{k_0}(K)}
\end{equation}
holds.
\end{lemma}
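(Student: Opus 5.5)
The plan is to test $\mu$ against a localized version of $g$ and use Besov duality. Fix a cutoff $\chi\in C_c^\infty(\R^N)$ with $\chi\equiv 1$ on a neighborhood of $\mathrm{supp}(\mu)$ and $\mathrm{supp}(\chi)\subset K$. This is possible because $\mathrm{supp}(\mu)$ is compactly contained in $K$. Since $\mu$ only sees $g$ near its support, we have $\langle \mu, g\rangle = \langle \mu, \chi g\rangle$. The test function $\chi g$ lies in $\mathcal{D}(\R^N)\subset\mathcal{S}(\R^N)$, so the pairing makes sense as a tempered distribution acting on a Schwartz function.

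Next I would invoke the duality pairing between $B_{p,q}^{s}(\R^N)$ and $B_{p',q'}^{-s}(\R^N)$. Concretely, write $\chi g = \LF*\widetilde{\LF}*(\chi g)+\sum_{k\ge 1}\HF_k*\widetilde{\HF}_k*(\chi g)$, where $\widetilde{\HF}_k = \HF_{k-1}+\HF_k+\HF_{k+1}$ (and similarly $\widetilde{\LF}$) are enlarged Littlewood--Paley pieces reproducing the frequency band. Pairing term by term and applying H\"older in $x$ and then in $k$ gives
$$|\langle\mu,\chi g\rangle| \le \|\mu*\LF\|_{L_p}\,\|\widetilde{\LF}*(\chi g)\|_{L_{p'}}+\sum_{k}2^{ks}\|\HF_k*\mu\|_{L_p}\,2^{-ks}\|\widetilde{\HF}_k*(\chi g)\|_{L_{p'}} \le C\|\mu\|_{B_{p,q}^s}\,\|\chi g\|_{B_{p',q'}^{-s}}.$$
This is the standard estimate, and one could simply cite \cite{BL} or \cite{Trieb} for it.

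It then remains to bound $\|\chi g\|_{B_{p',q'}^{-s}(\R^N)}$ by $C\|g\|_{C^{k_0}(K)}$. Choose an integer $k_0> \max(-s,0)$, say $k_0=\max(\lfloor -s\rfloor+1,0)+1$, so that it depends only on $s$. Then $B_{p',\infty}^{k_0}\subset B_{p',q'}^{-s}$ holds by the embedding for $(s_1,q_1)\preceq(s_2,q_2)$, which applies because $-s<k_0$. Moreover $W_{p'}^{k_0}(\R^N)\subset B_{p',\infty}^{k_0}(\R^N)$. Because $\chi g$ is supported in the compact set $K$, the Leibniz rule gives
$$\|\chi g\|_{W_{p'}^{k_0}}\le |K|^{1/p'}\,\|\chi g\|_{C^{k_0}}\le C_{\chi,k_0}|K|^{1/p'}\,\|g\|_{C^{k_0}(K)}.$$
Chaining these inequalities yields (\ref{finite_order}), with $C$ depending on $s,p,q$ and on $K$ through $\chi$ and $|K|$.

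The main obstacle is the duality step. The paper has not recorded $B_{p,q}^s$--$B_{p',q'}^{-s}$ duality, and the endpoint cases $p$ or $q\in\{1,\infty\}$ need care. The term-by-term estimate above sidesteps that difficulty: it only uses H\"older and the fact that $\HF_k*\widetilde{\HF}_k=\HF_k$ by the support properties. That identity requires the pairing $\langle\mu,\HF_k*\widetilde{\HF}_k*(\chi g)\rangle=\langle \HF_k*\mu,\widetilde{\HF}_k*(\chi g)\rangle$ (up to reflection, which is harmless since $\HF$ may be taken radial), and summation is justified by convergence of the Littlewood--Paley decomposition of $\chi g$ in $\mathcal{S}$.
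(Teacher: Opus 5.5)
Your proposal is correct and follows essentially the same route as the paper. Both arguments cut $g$ off near $\mathrm{supp}(\mu)$, apply the $B^{s}_{p,q}$--$B^{-s}_{p',q'}$ pairing, and bound $\|\chi g\|_{B^{-s}_{p',q'}(\R^N)}$ by a $C^{k_0}(K)$ norm. The differences are minor: you verify the duality bound directly via Littlewood--Paley and H\"older, where the paper simply asserts it, and you use the embedding $W^{k_0}_{p'}\subset B^{k_0}_{p',\infty}$ instead of the paper's moduli-of-smoothness estimates, which also correctly puts $p'$ where the paper writes $p$.
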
 
\begin{proof}
The inequality follows from the estimate
$|\langle \mu , g\rangle | \le C \|\mu\|_{B_{p,q}^s(\R^N)} \|g\|_{B_{p',q'}^{-s}(\R^N)} $
which holds for any configuration of $s$ and $p,q\in [1,\infty]$ (even when $p,q=\infty$).
Furthermore, we may assume without loss, that $g$ has compact support in $K$ 
by the following argument: if $\tau$ is a smooth cutoff with support in $K$ but equaling 1 on 
$\mathrm{supp}(\mu)$,
then $\tilde{g} = \tau g$ and $g$ satisfy 
$\langle \mu,g\rangle = \langle \mu,\tilde{g}\rangle$.
For a constant $C$ independent of $g$, we have, by
 $\|\tilde{g}\|_{C^{k_0}(K)}\le C \|g\|_{C^{k_0}(K)}$ by the product rule.
For $k_0>-s+1$ we have    
$\|g\|_{B_{p',q'}^{-s}(\R^N)} \le  \|g\|_{B_{p',q'}^{k_0-1}(\R^N)}  $. 
We can estimate this norm by
$$\|g\|_{B_{p',q'}^{k_0-1} (\R^N)} \le 
C(\|g\|_{L_p(K)} + \|t\mapsto t^{1-k_0} \omega_p^{k_0}(g,t;K)\|_{L_q(\frac{\diff t}{t})}).$$
By the estimate (\ref{mod_smoothness_deriv}), we have  
$ \omega_p^{k_0}(g,t;K)\le Ct^{k_0}  \max_{|\alpha| =k_0}\|D^{\alpha}g \|_{L_p(K)}$.
We also have the inequality
 $ \omega_p^{k_0}(D^{s}g,t;K)\le Ct^{k_0-1}  
 \max_{|\alpha| =k_0-1} \omega_p^1(D^{\alpha} g,t,K)
 \le Ct^{k_0-1} 
 \max_{|\alpha| =k_0-1}\|D^{\alpha}g \|_{L_p}$
 by (\ref{mod_smoothness_deriv}) followed by (\ref{mod_smoothness_order}).
 Using these two estimates, we derive the inequality
 $$ \|t\mapsto t^{1-k_0} \omega_p^{k_0}(g,t;K)\|_{L_q(\frac{\diff t}{t})} \le C  \max_{|\alpha| =k_0-1,k_0}\|D^{\alpha}g \|_{L_p(K)}.$$
Since  $\|D^{\alpha} g\|_{L_p(K)}\le \mathrm{vol}(K)^{1/p} \max_{x\in K} |D^{\alpha} g(x)|$, 
the result follows.
\end{proof}

Having $\mu$ which
annihilates polynomials guarantees
the convolution decays, and this
is the setting we work in.
The following result 
gives  side conditions  guaranteeing inclusion
in the global Besov space $B_{p,q}^{s+\theta+N}(\R^N)$.
\begin{lemma}
\label{L_side_condition}
If $\mu\in B_{p,q}^{s}(\R^N)\cap  \mathcal{E}'$  and $\mu\perp  \mathcal{P}_{L-1}$ with $L>\theta+N/p$,
then $\tps*\mu \in B_{p,q}^{s+\theta+N}(\R^N)$.
For compact $K\subset \R^N$ there is $C$ so that if $\mathrm{supp}(\mu)\subset K$, then 
$$\|\tps*\mu\|_{B_{p,q}^{s+\theta+N}(\R^N)}\le C \|\mu\|_{B_{p,q}^{s}(\R^N)}.$$
\end{lemma}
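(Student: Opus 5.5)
The plan is to split the Besov norm of $\tps*\mu$ into its high-frequency part and its low-frequency part:
$$\|\tps*\mu\|_{B_{p,q}^{s+\theta+N}(\R^N)} = \|(\tps*\mu)*\LF\|_{L_p} + \bigl\| k\mapsto 2^{k(s+\theta+N)}\|(\tps*\mu)*\HF_k\|_{L_p}\bigr\|_{\ell_q}.$$
The high-frequency sum is handled exactly as in Lemma \ref{L_TPS_seminorm}. There the factor $2^{-(\theta+N)k}$ gained from the Riesz potential $\mathcal{I}^{-(\theta+N)}$ on the annulus $\mathrm{supp}\,\widehat{\HF_k}$ is combined with the weight $2^{ks}$. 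This bounds the sum by $C\|\mu\|_{B_{p,q}^{s}(\R^N)}$. Only compact support of $\mu$ is used here, to make $\tps*\mu$ a tempered distribution. So the whole task reduces to showing $(\tps*\mu)*\LF\in L_p(\R^N)$ with norm at most $C\|\mu\|_{B_{p,q}^s}$, with $C$ depending only on $K$.

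For the low-frequency term, I will split $\R^N$ into a neighborhood and a far field. Fix a larger compact set $K'\supset K$ with $K$ compactly contained in its interior, and set $g=\tps*\mu$. On $K''=\{x:\dist(x,K)\le R\}$ for a fixed $R$, I will show $g$ is bounded in $L_p(K'')$ by $C\|\mu\|_{B^s_{p,q}}$. Write $g*\LF = \tps*(\mu*\LF)$ with $\mu*\LF$ a Schwartz function, or more simply $g*\LF(x)=\langle \mu, (\tps*\LF)(x-\cdot)\rangle$. Note that $\tps*\LF$ is a $C^\infty$ function of polynomial growth, since $\LF$ is Schwartz. Applying the finite-order estimate (\ref{finite_order}) gives
$$|g*\LF(x)|\le C\|\mu\|_{B^s_{p,q}}\,\|(\tps*\LF)(x-\cdot)\|_{C^{k_0}(K')}.$$
For $x$ in a bounded region this is uniformly bounded, which handles $x\in K''$.

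For the far field $|x|$ large, I will use the side condition. Because $\mu\perp\mathcal{P}_{L-1}$, the test function $y\mapsto (\tps*\LF)(x-y)$ may be replaced by its remainder after subtracting the Taylor polynomial of degree $L-1$ about a center $y_0\in K$. Then (\ref{finite_order}) bounds $|g*\LF(x)|$ by $C\|\mu\|_{B^s_{p,q}}\sup_{|\beta|\le k_0}\sup_{y\in K'}|D^\beta R_{L}(x,y)|$. By Taylor's theorem, each derivative of the remainder is controlled by derivatives of $\tps*\LF$ of order at least $L$, evaluated near $x$. The key pointwise fact needed is
$$|D^\alpha(\tps*\LF)(z)|\le C(1+|z|)^{\theta-|\alpha|}\qquad (|\alpha|\ge L),$$
possibly with a harmless $\log$ factor when $\theta\in2\N$. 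This holds because $\LF$ is Schwartz and $D^\alpha\tps$ is homogeneous of degree $\theta-|\alpha|$ away from the origin. Moreover, for $|\alpha|>\theta$ the singularity at $0$ is locally integrable, or one can argue on the Fourier side, since $\widehat{\LF}$ is smooth and compactly supported. Hence $|g*\LF(x)|\le C\|\mu\|(1+|x|)^{\theta-L}$, and this is in $L_p$ at infinity precisely when $p(L-\theta)>N$, i.e. $L>\theta+N/p$. This is the hypothesis. Note that only derivatives of order $\ge L$ appear, not order $L+k_0$, because the $y$-derivatives only raise the order.

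I expect the main obstacle to be the decay estimate for the high derivatives of $\tps*\LF$. The difficulty is that $D^\alpha\tps$ is a singular distribution at the origin when $|\alpha|\ge\theta+N$, so one cannot naively convolve pointwise bounds. My first attempt will be to write $\LF$-convolution locally: split $\LF=\LF\chi+\LF(1-\chi)$ with $\chi$ a cutoff near the origin, move derivatives onto the smooth factor in the near part, and use homogeneity in the far part. The log case $\theta\in2\N$ needs a short separate remark: $D^\alpha$ of $|x|^\theta\log|x|$ for $|\alpha|>\theta$ is homogeneous without log, and in any case the log only loses an arbitrarily small power, which $L>\theta+N/p$ absorbs.
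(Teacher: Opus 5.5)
Your proof is correct, but it is organized differently from the paper's. The paper applies the moment condition to $y\mapsto\tps(x-y)$ itself, which is smooth near $K$ only when $x\notin K$, and obtains the pointwise decay $|\tps*\mu(x)|\le C\|\mu\|_{B_{p,q}^s(\R^N)}\dist(x,K)^{\theta-L}$ of the potential. Because this bound degenerates near $K$, the paper then introduces a cutoff $\tau$. It places $(1-\tau)(\tps*\mu)$ in $L_p$ and applies Young's inequality with $\LF$. It treats $\bigl(\tau(\tps*\mu)\bigr)*\LF$ separately via $\langle\mu,\tps*(\tau\LF(x-\cdot))\rangle$ and the Schwartz decay of $\LF$.

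You instead move $\LF$ onto the kernel and apply the moment condition together with (\ref{finite_order}) to the globally smooth function $F=\tps*\LF$. This gives the single estimate $|(\tps*\mu)*\LF(x)|\le C\|\mu\|_{B_{p,q}^s(\R^N)}(1+|x|)^{\theta-L}$ on all of $\R^N$, with no spatial cutoff of $\tps*\mu$ and no Young's inequality. Your near/far split in $x$ is then not even needed. Your observation that only derivatives of order at least $L$ enter the Taylor remainder is right. The price is the decay estimate $|D^\alpha F(z)|\le C(1+|z|)^{\theta-|\alpha|}$ for $|\alpha|\ge L$. The paper, by contrast, only ever differentiates $\tps$ away from the origin, where its derivatives are explicitly homogeneous.

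On that estimate, two corrections are needed.

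\emph{Local integrability.} Your aside that the singularity of $D^\alpha\tps$ at $0$ is locally integrable for $|\alpha|>\theta$ is false once $|\alpha|\ge\theta+N$. This case cannot be avoided: for $p=1$ already $L>\theta+N$, and $k_0$ grows as $s$ decreases.

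\emph{Placement of the cutoff.} The cutoff should be put on $\tps$ at its singularity, not on $\LF$ near the origin. Take $\chi\in C_c^\infty(\R^N)$ equal to $1$ near $0$. Then $D^\alpha\bigl((\chi\tps)*\LF\bigr)=(\chi\tps)*D^\alpha\LF$ decays rapidly. Meanwhile $D^\alpha\bigl((1-\chi)\tps\bigr)$ is a smooth function bounded by $C(1+|w|)^{\theta-|\alpha|}$, with no logarithm since $|\alpha|>\theta$, as you note. Convolution with the Schwartz function $\LF$ preserves this bound by Peetre's inequality $(1+|w|)^{-a}\le(1+|z|)^{-a}(1+|z-w|)^{a}$ for $a\ge 0$. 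By contrast, cutting $\LF$ off near its own origin leaves the non-integrable singularity of $D^\alpha\tps(z-w)$ at $w=z$ inside the far piece.
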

\begin{proof}
Denote by $K$ a compact set containing the support of $\mu$.
It follows  that  $\tps*\mu$ is smooth in  $\R^N\setminus K$, 
and for $x\notin K$, the convolution can be written as $|\tps*\mu(x)| = |\langle \mu ,\tps( x-\cdot)\rangle| $.
Furthermore, because  $\mu$ annihilates polynomials in $ \mathcal{P}_{L-1}$, we have
$ |\langle \mu ,\tps( x-\cdot)\rangle| =  |\langle \mu ,\tps( x-\cdot)-p\rangle|$ 
for any $p\in \mathcal{P}_{L-1}$. Thus,
$$|\tps*\mu(x)| 
\le C \|\mu\|_{B_{p,q}^s} \sup_{|\alpha|\le k_0}  \sup_{y\in K} |D^{\alpha} (\tps(y-x)- p(y))|$$ 
holds for any 
polynomial $p\in \mathcal{P}_L(\R^N)$, since $\mu\perp \mathcal{P}_{L-1}$.
Applying Taylor's remainder formula gives
\begin{equation}
\label{TPS_convolution_global_decay}
|\tps*\mu(x)|
\le C  \|\mu\|_{B_{p,q}^s(\R^N)} (\dist(x,K))^{\theta-L}
\end{equation}
for  $x\notin K$.
Consequently, for a smooth cutoff function $\tau\in C_c^{\infty}(\R^N)$, with $\tau =1$ on a neighborhood of $K$,
$(\tps*\mu )(1-\tau)\in L_{p}(\R^N)$, and by integrating (\ref{TPS_convolution_global_decay}), 
we have $\|(\tps*\mu )(1-\tau)\|_{L_p} \le C  \|\mu\|_{B_{p,q}^s} $.
By Young's inequality, it follows that
$(\tps*\mu )(1-\tau)*\LF\in L_p(\R^N)$ as well, with 
\begin{equation}
\label{TPS_Lp_far}\|(\tps*\mu )(1-\tau)*\LF\|_{L_p(\R^N)} \le C \|\LF\|_{L_1(\R^N)} \|\mu\|_{B_{p,q}^s(\R^N)} .
\end{equation}
Since
$(\tps*\mu )\tau*\LF$ is a Schwartz function, $\tps*\mu*\LF\in L_p(\Omega)$. 
 Write $\tilde{\LF}_x:=\LF(x-\cdot)$ and observe that 
 $$(\tps*\mu )\tau*\LF(x) = \langle (\tps*\mu )\tau, \tilde{\LF}_x\rangle =  \langle \tps*\mu , \tau \tilde{\LF}_x\rangle = 
  \langle \mu ,\tps*( \tau \tilde{\LF}_x)\rangle $$
  (where the last equality uses the fact that $\tps$ is even).
  Using the order estimate (\ref{finite_order}) of $\mu$, we have
  $$|(\tps*\mu )\tau*\LF(x) |\le C \|\mu\|_{B_{p,q}^{s}} \max_{y\in K}\max_{|\alpha|\le k_0}|D^{\alpha}\bigl( \tps*( \tau \tilde{\LF}_x)\bigr)(y) |.$$
  The final quantity is bounded and has fast decay:  one sees this by direct estimation of the integral; 
  to this end, observe that  $\tilde{K}:=K-K=\{z_1-z_2\in\R^N\mid z_1,z_2\in K\}$  is a compact set and
  \begin{eqnarray*}
  \max_{y\in K}\ |\bigl( \tps*D^{\alpha}( \tau \tilde{\LF}_x)\bigr)(y) |
  &=&
   \max_{y\in K}\Bigl|\int_{\tilde{K}} \tps(z) D^{\alpha}( \tau \tilde{\LF}_x)\bigr)(y-z)\diff z\Bigr|\\
 &\le & \mathrm{vol}(\tilde{K}) \, \max_{z\in \tilde{K}}|\tps(z) | \,\max_{y\in K} | D^{\alpha}( \tau \tilde{\LF}_x)\bigr)(y)|
 \end{eqnarray*}
 Since $\LF\in \mathcal{S}(\R^N)$, there is a constant $\Gamma$ depending on $\phi, \tau,  K,k_0$   obtained by manipulating Schwarz semi-norms  so that for all $|\beta|\le k_0$ and all $z\in K$,
 $\max_{y\in K}|D^{\beta} \LF(x -y)|\le \Gamma (1+|x|)^{-N-1}$.
It follows that, for an enlarged a constant $C$ which is independent of $x$
  $$ \max_{y\in K}\max_{|\alpha|\le k_0}|D^{\alpha}\bigl( \tps*( \tau \tilde{\LF}_x)\bigr)(y)| \le \Gamma \, \mathrm{vol}(\tilde{K}) \, \max_{z\in \tilde{K}}|\tps(z) | (1+|x|)^{-N-1}$$
  which implies  (by enlarging the constant) that $\|(\tps*\mu )\tau*\LF\|_{L_p(\R^N)}\le C \|\mu\|_{B_{p,q}^{s}(\R^N)} $ holds with $C$ independent of $\mu$;
  combining this with (\ref{TPS_Lp_far}) yields
  $ \|(\tps*\mu)*\LF \|_{L_p(\R^N)}\le C\|\mu\|_{B_{p,q}^{s}(\R^N)}$.
  The lemma follows by combining this with Lemma \ref{L_TPS_seminorm}.
\end{proof}

\section{Kernel approximation of potentials on  
\texorpdfstring{$\R^N$}{}}
\label{S_potentials}
In this section we consider the approximation by kernels $\phi$ of functions appearing as potentials of $\phi$.
This includes $f =\int_{\Omega} \nu(z)\phi(x-z) \diff z$ where $\Omega$ is a compact region with open interior; i.e for
Newton-like potentials (an  $L_2$ Sobolev theory for this case has been considered in \cite{HR-Extending}).
It also includes the case $f =\int_{\Omega} \nu(z)\phi(x-z) \diff \sigma( z)$ when $\Omega\subset \M$ is a
(Lipschitz region in) an embedded $d$-dimensional algebraic submanifold $\M$; this aligns with
single layer potential produced by $\phi$.

%
%
%
%
%
\subsection{Setup}

We assume $\Omega\subset \R^N$, which is $d\le N$ dimensional in the following sense.

%
%
%
\begin{assumption}
\label{ball_comparison}
We assume that $\Omega\subset \R^N$ is compact,  equipped with a finite, positive Borel
measure $\sigma:\mathcal{B}_{\Omega}\to [0,\infty)$,
and
there is 
a constant 
$\omega_{\Omega}$ so that
for any $x\in\R^N$ and any 
$r>0$,
$$\sigma\bigl(B_{\Omega}(x,r) \bigr)\le \omega_{\Omega} r^d$$
holds, where
 $B_{\Omega}(x,r) :=\{z\in \Omega\mid |x-z|<r\}$.
 \end{assumption}
 This holds (with $d=N$)  if $\sigma$ is Lebesgue measure on $\R^N$.
 Similarly, if $\M\subset \R$ is an embedded $d$-dimensional submanifold, $\Omega\subset \M$ is compact,
 and if $\sigma$ is the volume generated by the induced Riemannian metric, then $\sigma$ satisfies Assumption \ref{ball_comparison}.
 
 Assumption \ref{ball_comparison} is weaker than  $\sigma$ being a {\em $d$-measure} and $\Omega$ being a 
 {\em $d$-set}, 
 as considered in \cite{JW}.
 We point out some connections with this notion.
 Following \cite[Proposition 1.1]{JW}, it is clear that if $\Omega\subset \R^N$ has Hausdorff dimension $d$,
and $\sigma$ satisfies Assumption \ref{ball_comparison}, then $\sigma$ is absolutely continuous with respect to 
$d$-dimensional Hausdorff measure $H_d$. 
Conversely,  
if $\Omega\subset \R^N$ is a $d$-set 
and if $f\in L_{\infty}(H_d)$ 
is non-negative, 
then 
$\sigma: 
E\mapsto \int_E f(x) \diff H_d(x)$ 
satisfies Assumption 
\ref{ball_comparison}.

\begin{assumption}
\label{A_LPR}
For $L\in\N$,
we assume there are  constants 
$K<\infty$ and  $h_0>0$ so that if 
$\Xi\subset \Omega$ satisfies 
$h= \max_{z\in \Omega} \min_{\xi\in \Xi} |\xi-z|<h_0$, then 
 there is a ``local polynomial reproduction'' $a\in C(\Xi\times \Omega)$ 
so that for any $z\in\M$, the following three properties hold:
\begin{itemize}
\item  $\mathrm{supp}(a(\cdot,z))\subset B(z,Kh)$,
\item $\sum |a(\xi,z)|\le 2$, 
\item $\sum a(\xi,z) p(\xi) =p(z)$ for all $p\in \mathcal{P}_{L-1}(\Omega)$. 
\end{itemize}
\end{assumption}

This holds if $\Omega\subset \R^N$ is a Lipschitz region in $\R^N$ by \cite{Wendland}.
 We have shown in \cite{HRW_LPR} that
the second assumption holds if $\Omega$ is the union of Lipschitz domains of $d$-dimensional algebraic manifolds.

\medskip

We can use the local polynomial reproduction to approximate  the push-forward of elements of $ L_p(\sigma)$ as follows:
identify $\nu\in L_p(\sigma)$ with the measure $\tilde{\nu}$ on $\R^N$  which acts on continuous functions by
$\int_{\R^N} f\diff \tilde{\nu}(x)= \int_{\Omega} f(x) \nu(x) \diff \sigma(x)$. The LPR approximant is the discrete measure
\begin{equation}
\label{push-forward-approximant}
\tilde{\nu}_{\Xi} = \sum_{\xi\in\Xi} \Bigl(\int_{\Omega} \nu(y)a(\xi,y) \diff \sigma(x) \Bigr)\delta_{\xi}.
\end{equation}
This is obviously a finite measure on $\R^N$ and is thus in $B_{p,\infty}^{-N/p'}(\R^N)$. 
For $s\le -N/p'$ we can approximate
$\tilde{\nu}$ by $\tilde{\nu}_{\Xi}$.
Specifically, we obtain the estimate
$$ \|\tilde{\nu}- \tilde{\nu}_{\Xi}\|_{B_{p,q}^s(\R^N)} \le C h^{ (d-N)/p' - s} \|\nu\|_{L_p(\sigma)}$$
for  $(s,q) \preceq (-N/p', \infty)$.
This is demonstrated in  appendix \ref{S_measures}.

\subsection{Approximation of potentials  by kernels }

We also assume that the target function (i.e., the object to be approximated)  can be written as a potential.
\begin{assumption}
\label{A_target}
For the (surface spline or Mat{\' e}rn  kernel)
 $\rbf$, and 
 the
 polynomial order
 $\ord \in \N$,
 we assume
 the function $f\in C(\R^N)$ can be expressed as follows:
 there exist $\nu\in L_1(\sigma)$ and $Q\in \mathcal{P}_{m-1}(\R^N)$
 so that  for every $x\in \R^N$,
$$f(x)=\int_{\Omega} \nu(z) \rbf(x-z)\diff\sigma(z) +Q(x),$$ 
holds and $\int_{\Omega} \nu(z) P(z)\diff \sigma(z)=0$ for all $P\in\mathcal{P}_{\ord-1}(\R^N)$.
\end{assumption}

A consequence of this assumption is that for $\tilde{\nu}$, the push-forward of $\nu\, \diff \sigma$,
we have
$$f = \rbf*\tilde{\nu} +Q.$$

Suppose $f$ satisfies 
Assumption \ref{A_target}, with $\ord \in\N$
and
$\Xi \subset \Omega$ is a finite set of points for which
the hypotheses of Assumption \ref{A_LPR} hold
with 
$L\ge \ord$ (for now; we will assume more about $L$ later).
We then define 
the approximant
$$T_{\Xi}f:=\sum_{\xi\in \Xi} \Bigl(\int_{\Omega} a(\xi,z) \nu(z)\diff\sigma(z)\Bigr) \rbf(\cdot-\xi) +Q.$$
We can express this function on $\R^N$ as a convolution, namely,
$$T_{\Xi}f = \rbf* \tilde{\nu}_{\Xi} +Q$$
where $\tilde{\nu}_{\Xi} $ is given in (\ref{push-forward-approximant}).

Because the density $\nu$ annihilates polynomials of degree $m-1$, the following holds.
\begin{lemma}
If Assumptions \ref{A_LPR} 
and 
\ref{A_target}
hold with $L\ge \ord$, then
$T_{\Xi}f\in V_{\Xi,\ord}(\rbf).$
\end{lemma}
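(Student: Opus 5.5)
We need to show that $T_{\Xi}f$ has the form $\sum_{\xi\in\Xi} c_\xi \rbf(\cdot-\xi) + Q$, where the coefficient vector $c$ lies in $\Lambda_{\Xi,\ord}$ and $Q\in\mathcal{P}_{\ord-1}$. The polynomial part is immediate: Assumption \ref{A_target} provides $Q\in\mathcal{P}_{\ord-1}(\R^N)$, and its restriction lies in $\mathcal{P}_{\ord-1}(\Omega)$. So the whole argument comes down to checking that the coefficients
$$c_\xi := \int_{\Omega} a(\xi,z)\nu(z)\diff\sigma(z)$$
are well defined and satisfy the moment conditions $\sum_{\xi\in\Xi} c_\xi P(\xi)=0$ for every $P\in\mathcal{P}_{\ord-1}$.

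First, well-definedness. By Assumption \ref{A_LPR}, $a\in C(\Xi\times\Omega)$ with $\sum_\xi|a(\xi,z)|\le 2$, so each $a(\xi,\cdot)$ is bounded by $2$. Since $\nu\in L_1(\sigma)$, each $c_\xi$ is finite and $|c_\xi|\le 2\|\nu\|_{L_1(\sigma)}$.

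Next, the moment conditions. Fix $P\in\mathcal{P}_{\ord-1}(\R^N)$. Because $\Xi$ is finite, the sum and the integral can be interchanged:
$$\sum_{\xi\in\Xi} c_\xi P(\xi)=\int_\Omega \nu(z)\Bigl(\sum_{\xi\in\Xi} a(\xi,z)P(\xi)\Bigr)\diff\sigma(z).$$
Since $L\ge\ord$, we have $\mathcal{P}_{\ord-1}\subset\mathcal{P}_{L-1}$, so the reproduction property in Assumption \ref{A_LPR} gives $\sum_\xi a(\xi,z)P(\xi)=P(z)$ for every $z\in\Omega$. The right-hand side therefore equals $\int_\Omega \nu(z)P(z)\diff\sigma(z)$, and this vanishes by the annihilation hypothesis in Assumption \ref{A_target}. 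Hence $c\in\Lambda_{\Xi,\ord}$.

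Combining the two parts, $T_{\Xi}f=\sum_\xi c_\xi\rbf(\cdot-\xi)+Q\in V_{\Xi,\ord}(\rbf)$.

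The only point requiring any care is the reproduction identity. Assumption \ref{A_LPR} is stated for $z\in\M$, whereas the integral runs over $z\in\Omega$, and we need the identity for $\sigma$-a.e.\ $z$ in the support of $\nu$. We read the assumption as holding for all $z\in\Omega$, which is evidently its intent. Beyond that the argument is routine.
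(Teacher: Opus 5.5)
Your proof is correct and follows the paper's argument exactly. You interchange the finite sum with the integral, apply polynomial reproduction (valid since $L\ge \ord$ gives $\mathcal{P}_{\ord-1}\subset\mathcal{P}_{L-1}$), and then use the annihilation condition on $\nu$ to conclude $(c_\xi)\in\Lambda_{\Xi,\ord}$; your remarks on the well-definedness of the coefficients and on reading the reproduction property for $z\in\Omega$ are sensible details that the paper leaves implicit.
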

\begin{proof}
Since $T_{\Xi} f = \sum_{\xi\in \Xi} A_{\xi} \rbf(\cdot -\xi) +Q$,
with $ A_{\xi}= \int_{\Omega}\nu(z)   a(\xi,z) \diff \sigma(z) $
we have
$$\sum_{\xi\in \Xi} A_{\xi} P(\xi)
= \int_{\Omega}\nu(z)  \sum_{\xi\in X} a(\xi,z) P(\xi) \diff \sigma(z) = \int_{\Omega}\nu(z)   P(z) \diff \sigma(z) = 0.$$
Since $(A_{\xi}) \in \Lambda_{\Xi,\ord}$, the result follows.
\end{proof}

\subsection{Main theorem}
 
Recall $k:=N-d$, where $d$ is the intrinsic dimension of $\Omega$, see Assumption \ref{ball_comparison}.

%
%
%
 \begin{theorem}
 \label{T_main}
 Suppose $\rbf$ is a Mat{\'e}rn or surface spline kernel 
 and
 $\Omega\subset \R^N$ satisfies Assumptions \ref{ball_comparison} and \ref{A_LPR} with 
 $L>\max(\theta+N,\ord-1,-(s+k/p'))$.
 Then there is a constant $C$ so that 
 for $p,q\in[1,\infty]$, and $s<\theta+N/p$, as well as for the extremal smoothness  
 $s=\theta+N/p$ and $q=\infty$, 
 such that if $f$ satisfies Assumption \ref{A_target} 
 with $\nu\in L_p(\sigma)$ then
 $$
 \|f-T_\Xi f\|_{B_{p,q}^{s}(\R^N)} \le C h^{\theta+d+k/p-s} \|\nu\|_{L_p(\sigma)}
 $$
 holds.
 \end{theorem}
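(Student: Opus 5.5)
The error is a convolution of the kernel with the measure error. Since $f=\rbf*\tilde{\nu}+Q$ and $T_\Xi f=\rbf*\tilde{\nu}_\Xi+Q$, we have
$$f-T_\Xi f=\rbf*(\tilde{\nu}-\tilde{\nu}_\Xi).$$
The plan is to control this by two facts used in sequence: the mapping properties of $\rbf*$ from Section \ref{S_RBF}, which gain $\theta+N$ orders of smoothness, and the appendix estimate $\|\tilde{\nu}-\tilde{\nu}_{\Xi}\|_{B_{p,q}^{t}(\R^N)}\le Ch^{(d-N)/p'-t}\|\nu\|_{L_p(\sigma)}$, valid for $(t,q)\preceq(-N/p',\infty)$. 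Set $t:=s-\theta-N$. The hypothesis $s<\theta+N/p$ (or $s=\theta+N/p$ with $q=\infty$) is exactly $(t,q)\preceq(-N/p',\infty)$, because $\theta+N/p-\theta-N=-N/p'$. The exponents then combine as
$$(d-N)/p'-t=-k/p'-s+\theta+N=\theta+d+k/p-s,$$
using $N-k/p'=d+k/p$. This matches the claimed rate.

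\textbf{Mat\'ern case.} Here (\ref{Matern_Isomorphism}) gives directly
$$\|\rbf*(\tilde{\nu}-\tilde{\nu}_\Xi)\|_{B_{p,q}^{s}}\le C\|\tilde{\nu}-\tilde{\nu}_\Xi\|_{B_{p,q}^{t}},$$
and the appendix estimate finishes the proof.

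\textbf{Surface spline case.} Apply Lemma \ref{L_side_condition} to $\mu=\tilde{\nu}-\tilde{\nu}_\Xi$. Three hypotheses need checking.
\begin{itemize}
\item $\mu$ is compactly supported. It is supported in $\Omega$ enlarged by $Kh_0$, a fixed compact set, so the constant in the lemma is uniform in $\Xi$.
\item $\mu\in B_{p,q}^t(\R^N)$. This follows since $\mu$ is a finite measure and $(t,q)\preceq(-N/p',\infty)$.
\item $\mu\perp\mathcal{P}_{L'-1}$ for some $L'>\theta+N/p$. For $P\in\mathcal{P}_{L-1}$, the polynomial reproduction gives $\tilde{\nu}_\Xi P=\int_\Omega\nu(y)\sum_\xi a(\xi,y)P(\xi)\,\diff\sigma(y)=\int\nu P\,\diff\sigma=\tilde{\nu}P$, so $\mu$ annihilates $\mathcal{P}_{L-1}$. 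Since $L>\theta+N\ge\theta+N/p$, the lemma applies with $L'=L$.
\end{itemize}
The lemma then gives $\|\rbf*\mu\|_{B_{p,q}^{s}}\le C\|\mu\|_{B_{p,q}^{t}}$, and we conclude as in the Mat\'ern case.

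\textbf{Main obstacle.} The substantive work is the appendix estimate on $\tilde{\nu}-\tilde{\nu}_\Xi$ in negative-order Besov norms. Its proof tests $\mu$ against each $\psi_k$. At scales $2^{-k}\gtrsim h$, one uses the vanishing moments of order $L$ together with Taylor expansion of $\psi_k(x-\cdot)$ on balls of radius $Kh$; this is where the hypothesis $L>-(s+k/p')$ enters, to make the resulting geometric series in $2^{k}h$ converge. At fine scales one uses the crude total-variation bound of Remark \ref{R_delta} together with the ball-growth Assumption \ref{ball_comparison} for $\sigma$. Since the appendix result is available to us, what remains in the theorem itself is checking that the parameter ranges are compatible: $L\ge\ord$ so that $T_\Xi f\in V_{\Xi,\ord}$, and the endpoint case $q=\infty$ corresponding to $(t,q)=(-N/p',\infty)$.
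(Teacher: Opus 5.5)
Your proposal follows the paper's proof. You write $f-T_\Xi f=\rbf*(\tilde\nu-\tilde\nu_\Xi)$, use the smoothing of $\rbf*$ to pass to order $t=s-\theta-N$, apply Lemma~\ref{L_measure_approx}, and check the exponents, and your exponent arithmetic is correct. In the surface spline case you are actually more careful than the paper. Its proof points to (\ref{TPS_mapping}), which only bounds $\tps*\mu$ on a bounded domain, whereas a global $B_{p,q}^s(\R^N)$ estimate needs Lemma~\ref{L_side_condition}. Your verification that $\tilde\nu-\tilde\nu_\Xi\perp\mathcal{P}_{L-1}$ via polynomial reproduction, together with $L>\theta+N\ge\theta+N/p$, is exactly what that lemma requires, and it is where the hypothesis $L>\theta+N$ is used.

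There is one genuine gap, which the paper's proof shares. You quote Lemma~\ref{L_measure_approx} as valid for all $(t,q)\preceq(-N/p',\infty)$, but it also has a lower endpoint, $((d-N)/p'-L,1)\prec(t,q)$. At $t=s-\theta-N$ this reads
\[
L>\frac{d-N}{p'}-t=\theta+N-s-\frac{k}{p'} .
\]
This condition is really needed. The low-frequency part only gives $\|\LF*(\tilde\nu-\tilde\nu_\Xi)\|_{L_p(\R^N)}\le Ch^{L}\|\nu\|_{L_p(\sigma)}$, and this must be dominated by $h^{(d-N)/p'-t}=h^{\theta+d+k/p-s}$. Likewise, the coarse-scale partial sum in the lemma's proof is controlled by its finest-scale term only when $L>(d-N)/p'-t$.

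Your remark that ``$L>-(s+k/p')$'' is what makes that series converge mixes up the target order $s$ with the measure order $t$. For $s\ge -k/p'$ the required inequality does follow from $L>\theta+N$. For $s<-k/p'$, however, $\theta+N-s-k/p'$ is the sum of the two positive numbers $\theta+N$ and $-(s+k/p')$. It is therefore not implied by $L>\max(\theta+N,\ord-1,-(s+k/p'))$, and the lemma cannot be invoked. The hypothesis has to be read as $L+k/p'>-t$. This agrees with the paper's remark after the theorem, where the ``$s$'' in ``$L+k/p'>-s$'' is the measure's Besov order. With that reading your argument is complete.
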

 This theorem provides a rare instance of negative norm error estimates for kernel methods. 
Please note, that the condition $(s,q) \preceq (-N/p', \infty)$ allows for negative values for $s$. This means, that for large negative values of $s$, the order of the local polynomial reproduction 
needs to be suitably large; namely it must satisfy
the relation 
$L+k/p'>-s$.
Error estimates in negative norms have been proven useful in several applications, see \cite[Chapter 5]{Thomee2006} or \cite{monsur:etal:2024}.

 \begin{proof}
 We can express the error between $f=  \rbf*\tilde{\nu}+Q$
 and 
 $T_{\Xi} f = \rbf*\tilde{\nu}_{\Xi} +Q$
 as 
 $$f-T_{\Xi} f =  \rbf*(\tilde{\nu}-\tilde{\nu}_{\Xi} ).$$
 By the mapping properties of convolution by $\rbf$ given in 
(\ref{Matern_Isomorphism}) (for Mat{\'e}rn kernels) and Lemma \ref{TPS_mapping}
(for surface splines), we have
$$ \|f-T_\Xi f\|_{B_{p,q}^{s}(\R^N)} \le C\|\tilde{\nu}-\tilde{\nu}_{\Xi} \|_{B_{p,q}^{s-(\theta+N)}(\R^N)}.$$
The theorem now follows from 
Lemma \ref{L_measure_approx}, which ensures that
$$\|\tilde{\nu}-\tilde{\nu}_{\Xi} \|_{B_{p,q}^{s-(\theta+N)}(\R^N)}
\le 
C h^{ (d-N)/p' - [s-(\theta+N)]} \|\nu\|_{L_p(\sigma)},$$
and from the observation $(d-N)/p' +N = d/p' +N/p = (d -d/p) +N/p =d + k/p$.
 \end{proof}

%
%
%
\section{Kernel approximation  on embedded manifolds}
\label{S_Main}
We  consider a setup similar to the one
in \cite{FW}. Let $\D, {\d},k\in \N$ with $\D={\d}+k$
and suppose 
 $\M\subset \R^{\D}$ is a  compact,\footnote{The requirement of compactness of $\M$ is one of convenience; it is 
 used to obtain a general trace result, but it can likely be replaced by a suitable condition of bounded geometry.}
  smooth embedded submanifold of dimension ${\d}$, possibly with boundary 
 and $\Omega \subset \M$ is a compact subset  with Lipschitz boundary
for which Assumption \ref{A_LPR} holds.
This allows $\M$ to be a compact subset of an algebraic manifold, as considered in \cite{HRW_LPR}. 

The induced Riemannian metric $g$, 
gives rise to distance $\dist_{\M}:\M\times \M\to \R$ and Borel measure $\sigma$. 
The  distance is equivalent to the Euclidean distance in the sense that
 there is a constant $\beta$,
 depending on $\Omega$, 
so that for all $z_1,z_2\in \Omega$,   the metric equivalence 
\begin{equation}
\label{Met_equiv}
  |z_1-z_2| \le  \dist_{\M}(z_1,z_2)\le \beta |z_1-z_2|
\end{equation}
holds. 
There is  a positive constant $\omega_{\Omega}$ so that
for $x\in\Omega$,
$
\sigma(\{z\in\Omega \mid |x-z|<r\})\le \omega_\Omega r^d.
$
Thus, Assumption \ref{ball_comparison} holds automatically in this setting.

\subsubsection*{Smoothness spaces on manifolds}
We define the Sobolev and Besov spaces on embedded submanifolds as in \cite[section 2.3]{FW}. Our goal
in this section is to provide a general trace result. This, too follows  \cite[section 2.3]{FW}, with small modifications
to treat manifolds with boundary.

Let  $\mathcal{A} = \{(U_j,\varphi_j)\mid j\le J\}$ be an  atlas for $\M$.
Note that compactness of $\M$ ensures the atlas  $\mathcal{A}$ can be assumed finite.
 Since $\M$ may have boundary, we have that for each $j$, $\varphi_j(U_j) = V_j$, which is either a ball in $\R^d$ or a half-ball.

By straightening out $\M$, each chart can be extended to a chart $\tilde{\varphi}_j:\tilde{U}_j\to \tilde{V}_j\subset \R^N$,
so that $\tilde{\mathcal{A}} = \{(\tilde{U}_j,\tilde{\varphi}_j)\mid j\le J\}$ is an atlas for a tubular neighborhood $\mathsf{N}$ of $\M$ and 
$\varphi_j = \tilde{\varphi}_j|_{\M}$.

 Now let $\{\tilde{\chi}_j\}$ be a partition of unity   subordinate to
 $\{\tilde{U}_j\}$, and let $\chi_j = \tilde{\chi}_j|_{\M}$ be the associated partition of unity of $\M$, which is subordinate
 to $\{U_j\}$.

 For a function $F$ defined in the tubular neighborhood, we introduce  maps $\tilde{ \pi}_j:C^{\infty}(\mathsf{N})\to C_c^{\infty}(\R^N)$ by
$$\tilde{ \pi}_j F (z) = \begin{cases} (\tilde{\chi}_j F) (\tilde{\varphi}_j^{-1} (z)), &z \in \tilde{V}_j,\\
0 &\text{otherwise}.
\end{cases}
$$
 Likewise, for a function $f$ defined on $\M$,  define  projections 
 $\pi_j:C^{\infty}(\M) \to C_c^{\infty}(\R_{\circ}^d)$, where
 $\R_{\circ}^d = \R^d$ or $\R_+^d= \R^{d-1}\times[0,\infty)$,
 depending on whether $V_j$ is a ball or half-ball,
 via the formula
$$ \pi_j f (z) = \begin{cases} (\chi_j f) (\varphi_j^{-1} (z)), &z \in V_j,\\
0 &\text{otherwise}.
\end{cases}
$$
Using this construction, define
$\|f\|_{B_{p,q}^s(\M)} := \sum_{j} \|\pi_j f\|_{B_{p,q}^s(\R_{\circ}^d)}$,
and note that we have the intertwining identity:
$$ \pi_j( \mathrm{Tr}_{\M} f )= \mathrm{Tr}_{\R_{\circ}^d}( \tilde{\pi}_j f).$$
If $\R_{\circ}^d = \R_{+}^d$ the trace $\mathrm{Tr}_{\R_{+}^d}$ is simply $\mathrm{Tr}_{\R_{+}^d}= \mathrm{rest}_{\R_+^{d}}\circ \mathrm{Tr}_{\R^d}$, 
where $\mathrm{rest}_{\R_+^{d}}$ is the restriction to the half-space.

\begin{lemma}
\label{L_trace}
Suppose
$\M\subset \R^N$, where $\M$  is a smooth, $d$-dimensional, compact embedded submanifold, possibly with boundary.
Let $k =N-d$ denote the manifold's codimension.
For $p\in[1 , \infty]$ and $s>0$,  the trace operator $\mathrm{Tr}_{\M}$
is continuous from 
$B_{p,q}^{s+k/p}(\R^N)$
to $B_{p,q}^{s}(\M)$,
while  for  $p\in[1, \infty)$,  and $m\in\N$, $\mathrm{Tr}_{\M}$ is continuous from 
$B_{p,1}^{m+k/p}(\R^N)$
 to $W_{p}^m (\M)$.
\end{lemma}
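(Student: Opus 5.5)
The plan is to localize with the extended atlas, straighten the manifold in each chart, and apply the classical Euclidean trace theorem for the flat subspace $\R^d\times\{0\}\subset\R^N$ chart by chart. Concretely, for $F\in B_{p,q}^{s+k/p}(\R^N)$ I would use the intertwining identity $\pi_j(\mathrm{Tr}_{\M}F)=\mathrm{Tr}_{\R_\circ^d}(\tilde\pi_j F)$. Summing over the finitely many charts then gives
$$\|\mathrm{Tr}_{\M}F\|_{B_{p,q}^s(\M)}=\sum_j\|\mathrm{Tr}_{\R_\circ^d}(\tilde\pi_jF)\|_{B_{p,q}^s(\R_\circ^d)}.$$
So it suffices to bound each term by $C\|F\|_{B_{p,q}^{s+k/p}(\R^N)}$. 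In the half-space case, $\mathrm{Tr}_{\R_+^d}=\mathrm{rest}_{\R_+^d}\circ\mathrm{Tr}_{\R^d}$. Restriction is norm-nonincreasing by the infimum definition of $B_{p,q}^s(\Omega)$, so it is enough to treat the trace onto the full hyperplane $\R^d$.

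Each term is handled in two steps. First, the map $F\mapsto\tilde\pi_jF=(\tilde\chi_jF)\circ\tilde\varphi_j^{-1}$ is bounded on $B_{p,q}^{s+k/p}(\R^N)$, because multiplication by a smooth compactly supported function and composition with a smooth diffeomorphism (extended to a global diffeomorphism, which is harmless since $\tilde\chi_j$ has compact support in $\tilde U_j$) are bounded on Besov spaces of positive order. This can be cited from Triebel. Second, I would invoke the trace theorem for the hyperplane $\R^d\subset\R^N$ of codimension $k$: $\mathrm{Tr}:B_{p,q}^{s+k/p}(\R^N)\to B_{p,q}^{s}(\R^d)$ is bounded for $s>0$ and $p,q\in[1,\infty]$. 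This is the standard result (Jonsson--Wallin or Triebel, \S2.7.2), obtained by iterating codimension one, or directly via Littlewood--Paley pieces and the Nikol'skii inequality. Here $\HF_k*F$ restricted to the hyperplane has $L_p(\R^d)$ norm at most $C2^{jk/p}\|\HF_k*F\|_{L_p(\R^N)}$ at frequency level $2^j$, and its $d$-dimensional Fourier support lies in a ball of radius $2^{j+1}$. Summing with $s>0$ controls the $B^s$ norm. I would cite this rather than reprove it, and note that it applies equally to the $B_{p,q}^{s}(\R^d)$-to-half-space restriction.

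For the Sobolev statement, I would use the same localization. For $p<\infty$ and integer $m$, the sharp endpoint trace $\mathrm{Tr}:B_{p,1}^{m+k/p}(\R^N)\to W_p^m(\R^d)$ holds. This follows because $B_{p,1}^{m}(\R^d)\subset W_p^m(\R^d)$: from the Littlewood--Paley decomposition, $\|D^\alpha g\|_{L_p}\le\sum_j\|D^\alpha(\HF_j*g)\|_{L_p}\lesssim\sum_j2^{jm}\|\HF_j*g\|_{L_p}$ by Bernstein's inequality, together with the low-frequency part. Combining this with the $q=1$ case of the Besov trace yields the claim. $W_p^m$ is stable under the localization maps $\pi_j$, and restriction to the half-space is bounded in $W_p^m$.

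The main obstacle I expect is bookkeeping at the boundary. The charts $V_j$ are half-balls, so I must make sure the definition $\|f\|_{B^s_{p,q}(\M)}=\sum_j\|\pi_jf\|_{B^s_{p,q}(\R^d_\circ)}$ via restriction/extension is compatible with the intertwining identity, which the paper takes as given. The other substantive point is the exact endpoint case $q=1$ for $W_p^m$, which is where $p<\infty$ is used.
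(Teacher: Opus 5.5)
Your proof is correct, and its skeleton matches the paper's: localization via the extended atlas, the intertwining identity $\pi_j(\mathrm{Tr}_{\M}F)=\mathrm{Tr}_{\R^d_\circ}(\tilde\pi_jF)$, boundedness of $\tilde\pi_j$, the flat codimension-$k$ trace (which the paper gets by iterating the codimension-one Bergh--L\"ofstr\"om theorem $k$ times), and reduction of the half-space case to unit-norm restriction.

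The genuine difference is in the Sobolev statement.
\begin{itemize}
\item The paper composes $k-1$ ordinary traces with the exotic endpoint $\mathrm{Tr}:B^{1/p}_{p,1}(\R^{d+1})\to L_p(\R^d)$, obtaining $\mathrm{Tr}:B^{k/p}_{p,1}(\R^N)\to L_p(\R^d)$. It then reaches $W^m_p$ by commuting tangential derivatives with the trace, using $D^\beta:B^{m+k/p}_{p,1}\to B^{m-|\beta|+k/p}_{p,1}\subset B^{k/p}_{p,1}$.
\item You take the $q=1$, $s=m>0$ case of the Besov trace and follow it with the embedding $B^m_{p,1}(\R^d)\subset W^m_p(\R^d)$, proved by Bernstein's inequality on the Littlewood--Paley pieces.
\end{itemize}

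Your route is more elementary, since it needs no endpoint trace. It also never uses $p<\infty$: both the Besov trace for $s>0$ and $B^m_{\infty,1}\subset W^m_\infty$ hold at $p=\infty$. So your closing remark about where $p<\infty$ enters does not apply to your own argument.

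What the paper's route buys is the case $m=0$, namely $\mathrm{Tr}_{\M}:B^{k/p}_{p,1}(\R^N)\to L_p(\M)$. Your argument cannot reach this case because it needs $s=m>0$. This $L_p$ endpoint is exactly what the paper invokes later, with $p'$ in place of $p$, in the proof of Theorem~\ref{T_manifold_approximation} to place $\nu\otimes\delta_{\M}$ in $B^{-k/p'}_{p,\infty}(\R^N)$. Your Nikol'skii sketch already supplies it if you want it. Apply the triangle inequality to $\sum_j\|\mathrm{Tr}(\HF_j*F)\|_{L_p(\R^d)}\lesssim\sum_j 2^{jk/p}\|\HF_j*F\|_{L_p(\R^N)}$ directly, instead of re-expanding the restricted pieces in Littlewood--Paley blocks on $\R^d$.
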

\begin{proof}
We begin by recalling the flat case, for
$\R_{\circ}^d\subset \R^N$.
When $\R_{\circ}^d = \R^d$,
 the result
\cite[Theorem 6.6.1]{BL}
ensures that
$\mathrm{Tr}_{\R^{\ell}}:
B_{p,q}^{S}(\R^{\ell+1}) 
\to  B_{p,q}^{S-1/p}(\R^{\ell})
$
as long as $S-1/p>0$. Iterating this
$k$ times, shows that
$\mathrm{Tr}_{\R^{d}}:
B_{p,q}^{S}(\R^N) \to  
B_{p,q}^{S-k/p}(\R^{d})$ is bounded, provided $S-k/p>0$.

In case $\R_{\circ}^d = \R_+^d$,
recall that $\|f \|_{B_{p,q}^{S}(\R_+^{d})} = \inf\{\|u\|_{ B_{p,q}^{S}(\R^{d}}\mid u\in B_{p,q}^{S}(\R^{d},\  f= \mathrm{rest}_{\R_+^{d}}u\}$, so
 the restriction map
$\mathrm{rest}_{\R_+^{d}}: B_{p,q}^{S-k/p}(\R^{d})\to B_{p,q}^{S-k/p}(\R_+^{d})$ is bounded (with unit norm).
Thus the composition $\mathrm{Tr}_{\R_+^{d}}:
B_{p,q}^{S}(\R^N) \to  
B_{p,q}^{S-k/p}(\R_+^{d})$ is bounded as well.
 
A similar result follows by 
applying the exotic case 
$\mathrm{Tr}_{\R^{d}}:B_{p,1}^{1/p}(\R^{d+1}) \to  L_{p}(\R^{d})$ given in
\cite[Eqn. (3) p. 156]{BL}
to the previous trace result
to get  boundedness of $\mathrm{Tr}_{\R^{d}}:B_{p,1}^{k/p}(\R^N) \to  L_p(\R^{d})$.
This then extends to
 $\mathrm{Tr}_{\R^{d}}:B_{p,1}^{\ell+k/p}(\R^N)\to W_p^\ell(\R^{d})$, since 
 partial
 differentiation is bounded on Besov spaces; i.e.,
 $D^{\beta}:B_{p,1}^{\ell+k/p}(\R^N) \to B_{p,1}^{\ell-|\beta|+k/p}(\R^N)$. The case
 of half-spaces follows similarly.
 
 The  general case follows from the intertwining identity  
 $ \pi_j( \mathrm{Tr}_{\M} f )= \mathrm{Tr}_{\R_{\circ}^d}( \tilde{\pi}_j f)$,
 which, along with the definition gives 
 $$\|\mathrm{Tr}_{\M} f \|_{B_{p,q}^s (\M)}  = \sum_{j=1}^J \|\pi_j( \mathrm{Tr}_{\M} f )\|_{B_{p,q}^s (\R_{\circ}^d)} 
 = \sum_{j=1}^J \| \mathrm{Tr}_{\R_{\circ}^d}( \tilde{\pi}_j f)\|_{B_{p,q}^s (\R_{\circ}^d)}.
 $$
 At this point, we can use the above
 flat trace results to the right hand side, to obtain
$$
 \|\mathrm{Tr}_{\M} f \|_{B_{p,q}^s (\M)} 
  \le 
C_1
\sum_{j=1}^J \|  ( \tilde{\pi}_j f)\|_{B_{p,q}^{s+k/p} (\R^N)} 
 \le C_2\|f\|_{B_{p,q}^{s+k/p} (\R^N)}.
$$
The first constant $C_1=\| \mathrm{Tr}_{\R_{\circ}^d}\|_{B_{p,q}^{s+k/p} (\R^N)\to B_{p,q}^s (\R_{\circ}^d)}$
is the bound on the trace operator,
and the second constant incorporates $\max_{j\le J} \|\tilde{\pi}_j \|_{B_{p,q}^{s+k/p} (\R^N) \to B_{p,q}^{s+k/p} (\R^N)}$, which is 
 finite by compactness of $\M$ (yielding $J<\infty$) and  by the boundedness of $\tilde{\pi}_j $, which follows from the fact that the  operations of multiplication $f\mapsto \tilde{\chi}_j f$ and
 smooth change of variable $f\mapsto f\circ \tilde{\varphi}_{j}$ are both bounded on Besov spaces. A completely similar result holds in case of 
 trace into Sobolev space.
\end{proof}

\subsection{Approximation on  \texorpdfstring{$\M\subset \R^N$}{}}
For a finite set 
$\Xi\subset \M\subset \R^N$, we recall the definitions of $\Lambda_{\Xi,\ord}$ and $V_{\Xi,\ord}(\Phi)$ given in  section \ref{SS_PD_kernels}.
The following theorem results from the  estimates of Theorem \ref{T_main} and properties of the trace operator.
\begin{theorem}
\label{T_manifold_approximation}
Suppose
$\Omega\subset\M$ 
is a compact subset of $d$-dimensional submanifold $ \M\subset \R^N$, 
and that 
Assumption \ref{A_LPR}   holds on $\Omega$. 
Let $\rbf  =  \tphi$ or $\tps$ and $\Phi_{\theta}:\M\times \M \to \R:(x,y)\mapsto \rbf(x-y)$.

If  $P\in\mathcal{P}_{\ord-1}(\M)$ and
$\nu\in L_p(\M)$ satisfies $\int_{\Omega} \nu(z) Q(z) \diff \sigma(z)=0$ for all $Q\in \mathcal{P}_{\ord-1}(\M)$,
then
for $p\in[1,\infty]$,
 $F =  \int_{\Omega} \Phi_{\theta}(\cdot,z) \nu(z)\diff \sigma(z) +P$ is an element of $ B_{p,\infty}^{\theta+d}(\M)$.

For  $p,q\in[1,\infty]$ and $0 < s <\theta+d/p$, 
as well as for the extremal smoothness  
$s=\theta+d/p$ and $q=\infty$,  the following holds:
if $\Xi\subset \Omega$ is sufficiently dense then 
$$\dist_{B_{p,q}^s(\M)} \bigl(F,V_{\Xi,\ord}(\Phi_{\theta}) \bigr):= 
\inf_{U\in V_{\Xi,\ord}(\Phi_{\theta})} \|F- U\|_{B_{p,q}^s(\M)} \le 
C h^{\theta+d-s} \|\nu\|_{L_p(\M)}.$$
\end{theorem}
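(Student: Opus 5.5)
The plan is to realize $F$ as the trace of a Euclidean potential, apply Theorem \ref{T_main} in $\R^N$ with the smoothness index raised by $k/p$, and then pull the estimate back to $\M$ with Lemma \ref{L_trace}.

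\textbf{Setup.} Let $f := \rbf*\tilde{\nu} + \tilde P$ on $\R^N$. Here $\tilde{\nu}$ is the push-forward of $\nu\,\diff\sigma$ restricted to $\Omega$, and $\tilde P\in\mathcal{P}_{\ord-1}(\R^N)$ is any polynomial whose restriction to $\M$ is $P$. Then $F = \mathrm{Tr}_{\M} f$, and $f$ satisfies Assumption \ref{A_target}. The moment condition $\int_\Omega \nu Q\,\diff\sigma=0$ for all $Q\in\mathcal{P}_{\ord-1}(\M)$ is exactly the annihilation of restrictions of polynomials in $\mathcal{P}_{\ord-1}(\R^N)$. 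Assumption \ref{ball_comparison} holds automatically on $\Omega$, as noted at the start of this section.

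\textbf{Regularity of $F$.} Since $\tilde\nu$ is a finite compactly supported measure, Remark \ref{R_delta} gives $\tilde\nu\in B_{p,\infty}^{-N/p'}(\R^N)$. I would sharpen this using Assumption \ref{ball_comparison} to $\tilde\nu\in B^{-k/p'}_{p,\infty}(\R^N)$. This should follow from $\|\tilde\nu*\psi_k\|_{L_p}\lesssim 2^{jk/p'}\|\nu\|_{L_p(\sigma)}$, the same computation that underlies Lemma \ref{L_measure_approx} in the appendix.

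\emph{Matérn case.} By (\ref{Matern_Isomorphism}), $\rbf*\tilde\nu \in B_{p,\infty}^{\theta+N-k/p'}(\R^N)=B_{p,\infty}^{\theta+d+k/p}(\R^N)$. Lemma \ref{L_trace} with $s=\theta+d>0$ then gives $F\in B^{\theta+d}_{p,\infty}(\M)$.

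\emph{Surface spline case.} Global membership via Lemma \ref{L_side_condition} would need extra moment conditions. Instead I would multiply by a smooth cutoff equal to $1$ near $\M$, which does not change the trace. The estimate (\ref{TPS_mapping}) on a bounded neighborhood, or Lemma \ref{L_TPS_seminorm} combined with the smoothness of $\tps*\tilde\nu$ away from its support, then gives the same conclusion.

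\textbf{Approximation estimate.} Take $U=\mathrm{Tr}_\M T_\Xi f$, which lies in $V_{\Xi,\ord}(\Phi_\theta)$ by the lemma preceding Theorem \ref{T_main}. Lemma \ref{L_trace} bounds the error by
$$\|F-U\|_{B^s_{p,q}(\M)}\le C\|f-T_\Xi f\|_{B^{s+k/p}_{p,q}(\R^N)}.$$
Now apply Theorem \ref{T_main} with $s$ replaced by $s+k/p$. The admissible range $s+k/p<\theta+N/p$ is equivalent to $s<\theta+d/p$, and the endpoint $s+k/p=\theta+N/p$ with $q=\infty$ corresponds to $s=\theta+d/p$ with $q=\infty$. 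The resulting rate is $h^{\theta+d+k/p-(s+k/p)}=h^{\theta+d-s}$. Theorem \ref{T_main} is stated with Euclidean fill distance, but by (\ref{Met_equiv}) this is comparable to the intrinsic one, so the constant absorbs the difference.

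\textbf{Main obstacle.} The surface spline case is the delicate step, since $\tps*\tilde\nu$ is not in a global Besov space. Lemma \ref{L_side_condition} needs $\tilde\nu-\tilde\nu_\Xi\perp\mathcal{P}_{L-1}$ with $L>\theta+N/p$, but we only know annihilation of $\mathcal{P}_{\ord-1}$. My first attempt would be to localize: insert a cutoff $\tau\equiv1$ near $\Omega$, use $\mathrm{Tr}_\M(\tau g)=\mathrm{Tr}_\M g$, and prove the local mapping estimate
$$\|\tau(\tps*\mu)\|_{B^{s+\theta+N}_{p,q}(\R^N)}\le C\|\mu\|_{B^s_{p,q}}$$
for $\mu$ supported in a fixed compact set. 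To do this I would split $\tps*\mu*\psi_j$ at high frequencies, handled by Lemma \ref{L_TPS_seminorm}, and treat the low-frequency and commutator parts via the finite-order bound (\ref{finite_order}), as in the proof of Lemma \ref{L_side_condition}.
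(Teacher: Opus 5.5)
Your proposal is correct and follows the same plan as the paper's proof. You write $F$ as the trace of $f=\rbf*\tilde\nu+P$, and regularity comes from $\tilde\nu\in B^{-k/p'}_{p,\infty}(\R^N)$ together with the mapping property of $\rbf*$. For $\tps$ you fall back on the local estimate (\ref{TPS_mapping}), as the paper does. The error bound is Lemma \ref{L_trace} followed by Theorem \ref{T_main} at smoothness $s+k/p$.

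\textbf{Where you differ: the membership $\tilde\nu\in B^{-k/p'}_{p,\infty}(\R^N)$.} The paper splits off $p=1$, where a finite measure lies in $B^{0}_{1,\infty}$. For $p>1$ it bounds
$|\langle\nu\otimes\delta_{\M},\phi\rangle|\le\|\mathrm{Tr}\,\phi\|_{L_{p'}(\M)}\|\nu\|_{L_p(\M)}\lesssim\|\phi\|_{B^{k/p'}_{p',1}(\R^N)}\|\nu\|_{L_p(\M)}$
using the exotic trace case of Lemma \ref{L_trace}, and then invokes the duality $(B^{k/p'}_{p',1})'=B^{-k/p'}_{p,\infty}$. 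Your direct Littlewood--Paley bound goes differently: the $L_1$ estimate via $\|\psi\|_{L_1}$, the $L_\infty$ estimate via (\ref{integral_test}) with $a=2^j$, then Riesz--Thorin.
\begin{itemize}
\item It uses only Assumption \ref{ball_comparison}.
\item It handles all $p\in[1,\infty]$ at once.
\item It avoids both the Besov duality theorem and the restriction $p'<\infty$ in the exotic trace, which is what forces the paper's case split.
\end{itemize}
The paper's route, in exchange, reuses the trace machinery it has already set up on $\M$.

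\textbf{Your ``main obstacle'' is misdiagnosed.} This does not break your argument. The error measure annihilates all of $\mathcal{P}_{L-1}(\R^N)$, not just $\mathcal{P}_{\ord-1}$. For $Q\in\mathcal{P}_{L-1}(\R^N)$, polynomial reproduction gives
$\langle\tilde\nu_\Xi,Q\rangle=\int_\Omega\nu(y)\sum_{\xi\in\Xi}a(\xi,y)Q(\xi)\,\diff\sigma(y)=\int_\Omega\nu(y)Q(y)\,\diff\sigma(y)=\langle\tilde\nu,Q\rangle .$
Theorem \ref{T_main} assumes $L>\theta+N\ge\theta+N/p$. So Lemma \ref{L_side_condition} applies directly to $\tilde\nu-\tilde\nu_\Xi$, which is supported in the fixed compact set $\Omega$. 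This is what makes the global Besov estimate in Theorem \ref{T_main} valid for the surface splines.

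The side-condition problem is real only for the regularity claim about $F$ itself, where $\tilde\nu$ annihilates only $\mathcal{P}_{\ord-1}$. There you, like the paper, correctly use the local estimate. Your cutoff-based localization would also work for the error bound, but it is unnecessary.
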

\begin{proof}
We  consider cases $p>1$ and $p=1$, and assume that $\nu\in L_p(\M)$ is supported in $\Omega$. 
Define 
$f:= 
\int_{\M} 
\rbf(\cdot-z) \nu(z)
\diff \sigma(z)+P
$, 
which is smooth away from 
$\Omega\subset \R^N$. 
Consider the  compactly 
supported distribution 
$\nu\otimes \delta_{\M}:
g
\mapsto 
\int_{\M}g(y)\nu(y)
\diff\sigma(y)$.

If $\nu\in L_1(\M)$, 
then $\nu\otimes \delta_{\M}$ 
is a finite Radon measure, so 
$\nu\otimes \delta_{\M}
\in 
B_{1,\infty}^0(\R^N)$; see \cite[Theorem. 2.2]{Kabanava_2008} or Remark \ref{R_delta}.
Thus $f=\rbf*(\nu\otimes \delta_{\M})\in B_{1,\infty}^{\theta+N}(U)$ 
for a neighborhood $U$ of $\M$
by (\ref{Matern_Isomorphism})  
in case of Mat{\'e}rn kernels or
(\ref{TPS_mapping}) 
for surface spline kernels. Applying the trace theorem gives 
$F =\mathrm{Tr}f\in B_{1,\infty}^{\theta +d}(\M)$.

If $\nu\in L_p(\M)$, $p\in (1,\infty]$, then  $\nu\otimes \delta_{\M}$ satisfies, for any $\phi\in \mathcal{S}(\R^N)$ 
$$
|\langle \nu\otimes \delta_{\M},\phi\rangle| \le \|\mathrm{Tr}\phi\|_{L_{p'}(\M)}\|\nu\|_{L_p(\M)} \le
C\|\phi\|_{B_{p,1}^{k/p'}(\R^N)}\|\nu\|_{L_p(\M)}
$$
by Lemma \ref{L_trace},
so $\nu\otimes \delta_{\M}\in (B_{p',1}^{k/p'})'(\R^N) $ which equals $ B_{p,\infty}^{-k/p'}(\R^N) $ by \cite[Corollary 6.2.8]{BL}.
By 
(\ref{Matern_Isomorphism}) or
(\ref{TPS_mapping}), 
it follows that $f\in B_{p,\infty}^{\theta+N-k/p'}(U) = B_{p,\infty}^{\theta+d/p' +N/p}(U) $
for a neighborhood of $\M$.
Using 
$d(1-1/p)+N/p=d+(N-d)/p$, 
the trace theorem \ref{L_trace} 
gives $F =\mathrm{Tr}f\in B_{p,\infty}^{\theta +d}(\M)$.

Using the approximation scheme of section \ref{S_potentials}, set  
$u= T_\Xi f=\sum_{\xi\in \Xi} A_{\xi} \phi(\cdot-\xi)+P$
and  
$U= \mathrm{Tr}_{\M} u$.
In particular, $U = P+\sum_{\xi\in \Xi} A_{\xi} \Phi_{\theta}(\cdot,\xi) $. By the
hypothesis that $ \nu\perp \mathcal{P}_{\ord-1}(\M)$, 
we have that 
$$\sum_{\xi\in \Xi} A_{\xi} Q(\xi) = \sum_{\xi\in \Xi} \int_{\Omega} a(\xi,z) Q(\xi)\nu(z)  \diff z= \int_{\Omega} Q(z) \nu(z) \diff z  = 0,$$
so $U\in V_{\Xi,\ord}(\Phi_{\theta})$.
By Lemma \ref{L_trace} followed by   Theorem \ref{T_main}
we have
$$\|F-U\|_{B_{p,q}^{s}(\M)}\le C \|f-u\|_{B_{p,q}^{s+k/p}(\R^N)} \le  C h^{\theta +d+k/p -(s+k/p) }\|\nu\|_{L_p(\M)}$$
and the result follows.
\end{proof}

\subsection{Range of the single layer potential operator}
\label{SS_SLP}
We note that in some cases, the class of  target functions 
\begin{equation}\label{eq:Ap}
\mathcal{A}_p := 
\Bigl\{
F = 
\int_{\M} \kernel(\cdot, z) \nu(z) \diff \sigma(z) 
\mid 
\nu\in L_p(\Omega),\ 
\nu \perp \mathcal{P}_{\ord-1}(\M)
\Bigr\}
+
 \mathcal{P}_{\ord-1}(\M)
\end{equation}
is a standard smoothness space. We now discuss 
the mapping properties of the 
operator 
$$\mathcal{V}:\nu\mapsto \int_{\M} \nu(z)\Phi(\cdot,z)\diff z$$
and consider
two  cases where the range of $\mathcal{V}$ 
is well understood.

\medskip

\begin{remark} 
\label{R_Spheres} 
If $\M$ is a sphere, $\M=\Sph^{\d}$, then  $\mathrm{Range}(\mathcal{V})$ is easily determined.

In this case the  polynomial $P\in \mathcal{P}_{\ord-1}(\Sph^d)$ has a spherical harmonic expansion
$$P=\sum_{\ell\le \ord-1}\sum_{\mu\le N_{d,\ell}} \langle P, Y_{\ell}^\mu \rangle  Y_{\ell}^\mu.$$
Furthermore, the kernel satisfies the expression
$\kernel(x,y) = \rbf(x-y) = \varphi(x\cdot y)$  and can be expanded 
with the Mercer-like series
$$\kernel(x,y) = \sum_{\ell=0}^{\infty} \sum_{\mu\le N_{d,\ell}} c_{\ell,\mu} Y_{\ell}^\mu(x)Y_{\ell}^\mu(y)$$
with expansion coefficients  $c_{\ell,\mu}>0$ for all $\ell\ge \ord-1$.
For many such kernels (including restricted surface splines and Mat{\'e}rn kernels) 
the coefficients have been  determined analytically in \cite{Baxter} and \cite{Odell}.

Thus, the operator 
$\mathcal{V}$ 
can be expressed on 
$
\mathcal{P}^{\perp}_{\ord-1}
(\Sph^d)
$ 
as
$$
\mathcal{V}\nu
=
\sum_{\ell,m} 
c_{\ell,\mu} 
\langle 
\nu, Y_{\ell}^\mu
\rangle Y_{\ell}^\mu.
$$
It follows that
$
\nu
\mapsto 
\mathrm{Proj}_{\ord-1} \nu
+ 
\mathcal{V}
(1-\mathrm{Proj}_{\ord-1})\nu
$
is an invertible map from 
$L_2(\Sph^d)$ to 
$W_2^{\theta+d}(\Sph^d)$.
So  if 
$f\in W_2^{\theta+d}(\Sph^d)$, 
then 
$f=
\int_{\Sph^{\d}} \nu(y)\kernel(\cdot,y)
\diff\mu(y)+P
$
with 
$P 
=
\mathrm{Proj}_{\ord-1} f$, $\nu \in L_2(\Sph^d)$ 
and equivalent  norms 
$
\|\nu\|_{L_2(\Sph^{\d})} 
\sim 
\|f\|_{W_2^{\theta+d}
(\Sph^{\d})/
\mathcal{P}_{\ord-1}(\Sph^d)} 
$.
\end{remark}
\medskip

\subsubsection*{Smooth boundaries}
In this case we consider $\M=\partial \varUpsilon$ for a compact region  $\varUpsilon\subset \R^{\D}$ 
with smooth boundary
(so $\d=\D-1$ and $k=1$),
and the measure $\sigma$ is the surface measure on $\partial \varUpsilon$.
We restrict attention to the case that $\phi$ is the fundamental
solution to  an elliptic, homogeneous constant coefficient differential operator of order $2\DiffOrd$. 
This means $\phi=\tphi$ or $\tps$ with $\theta+N=2\DiffOrd$.

In this case, 
$\tilde{\mathcal{V}}:
\nu\mapsto \int_{\M} \nu(z) \rbf(x-z)\diff \sigma(z)$ is a {\em single layer potential}.
If $p\in(1,\infty)$ 
and $s\in\R$
then the composition
$$\mathcal{V} := 
\mathrm{Tr} \circ \tilde{\mathcal{V}}$$
 of 
$\mathcal{V}$ with  the trace to $\partial \varUpsilon$
extends to a bounded map from 
$W_p^{s}(\partial  \varUpsilon)$  to  
$W_p^{s+2\DiffOrd-1}(\partial  \varUpsilon)$
by \cite[Theorem 3.2]{Duduchava} (see also \cite[Lemma 3.7]{HLayer}).  
In particular, when $s=0$,
$\mathcal{V}$ is bounded from $L_p(\partial\varUpsilon)$ to 
$W_p^{\theta+d}(\partial  \varUpsilon)$,
since $2\DiffOrd-1 =\theta+N-1=\theta+d$.
More can be said in this case, because $\mathcal{V}$ is a pseudodifferential operator. 

The next lemma shows that 
$\mathcal{A}_p = 
W_p^{\theta+d}(\partial \varUpsilon)/ \mathcal{P}_{\ord-1}(\Sph^d)
=W_p^{2\DiffOrd-1}(\partial  \varUpsilon)/\mathcal{P}_{\ord-1}(\Sph^d)$ when $1<p<\infty$.
To treat the conditionally positive definite case, we augment $\mathcal{V}$ by a polynomial as follows:
let 
$(p_j)_{j\le L_{\ord-1}}$ be a basis for  $\mathcal{P}_{\ord-1}(\M)$. Then $\mathcal{V}^{\sharp} $
is 
defined by 
$$\mathcal{V}^{\sharp} \begin{pmatrix} \nu\\ \vec{c}\end{pmatrix} 
=
\begin{pmatrix} \mathcal{V} & P\\ P^T& 0\end{pmatrix}  \begin{pmatrix} \nu\\ \vec{c}\end{pmatrix} 
= 
\begin{pmatrix} f\\ \vec{d}\end{pmatrix}
$$ where 
$
f
= 
\mathcal{V}\nu +\sum_{j=1}^{L_{\ord-1}} c_j p_j
$
and $d_j = \langle \nu,p_j\rangle$.

\begin{lemma} \label{lem:Apboundary} Suppose $\Upsilon\subset \R^N$ is a compact region
with smooth boundary, 
and let $\M:=\partial \Upsilon$.
 Then for an integer $\DiffOrd>N/2$,
 $\theta=2\DiffOrd-N$ and $\ord$ such that
$\ord\ge 0$ if $\rbf=\tphi$ and 
let 
$\ord 
\ge  
\lfloor\frac{\theta}{2}\rfloor+1
$
if $\rbf=\tps$, the following holds:
for $p\in(1,\infty)$ and $s\in\R$, the map
$$
\mathcal{V}^{\sharp}
:
W_p^{s}(\M)\times \R^{L_{\ord-1}} 
\to 
W_p^{s+2\DiffOrd-1}(\M)\times \R^{L_{\ord-1}}
$$
is  boundedly invertible. 
In particular, 
$\mathcal{A}_p 
= 
W_p^{2\DiffOrd-1}(\M)/\mathcal{P}_{\ord-1}(\M)
$.
\end{lemma}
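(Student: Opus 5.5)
The plan is to show that $\mathcal{V}$ is an elliptic pseudodifferential operator of order $2\DiffOrd-1$ on the closed manifold $\M=\partial\Upsilon$, hence Fredholm of index zero between $W_p^s(\M)$ and $W_p^{s+2\DiffOrd-1}(\M)$, and then to pass to $\mathcal{V}^\sharp$ and check that it is injective. Boundedness of $\mathcal{V}$ is already available from \cite[Theorem 3.2]{Duduchava}. For ellipticity, recall that $\rbf$ is (up to a constant) the fundamental solution of an elliptic constant coefficient operator of order $2\DiffOrd$; for $\tps$ this operator is $\Delta^{\DiffOrd}$, and for $\tphi$ it is $(1-\Delta)^{\DiffOrd}$. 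The principal symbol of the single layer potential at $(x,\xi)\in T^*\M$ is therefore
\[
\frac{1}{2\pi}\int_{\R}\bigl(|\xi|^2+t^2\bigr)^{-\DiffOrd}\,\diff t \;=\; c_{\DiffOrd}\,|\xi|^{1-2\DiffOrd},
\]
which is positive and homogeneous of degree $1-2\DiffOrd$. Consequently $\mathcal{V}$ is an elliptic pseudodifferential operator with this order. The standard parametrix construction on a compact manifold, with $L_p$-boundedness of order-zero operators for $1<p<\infty$, then shows that $\mathcal{V}:W_p^s\to W_p^{s+2\DiffOrd-1}$ is Fredholm. Its index is zero, because the principal symbol is positive, so $\mathcal{V}$ is homotopic through elliptic operators to $(1-\Delta_\M)^{(1-2\DiffOrd)/2}$. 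Kernel and cokernel consist of smooth functions by elliptic regularity, so they do not depend on $s$ or $p$. Since $\mathcal{V}^\sharp$ is a finite-rank bordering of $\mathcal{V}$, it is also Fredholm of index zero, and it suffices to prove injectivity at the single level $s=0$, $p=2$ on smooth data.

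\textbf{Injectivity.} Suppose $\mathcal{V}\nu+\sum_j c_jp_j=0$ and $\langle\nu,p_j\rangle=0$ for all $j$, with $\nu$ smooth. Pair the first equation with $\nu$. The polynomial term drops out, leaving
\[
\int_{\M}\int_{\M}\nu(x)\nu(z)\rbf(x-z)\,\diff\sigma(x)\,\diff\sigma(z)=0.
\]
This quadratic form equals
\[
\int_{\R^N}|\widehat{\nu\otimes\delta_\M}(\omega)|^2\,\widehat{\rbf}(\omega)\,\diff\omega .
\]
For the Matérn kernel this identity is direct. For the surface spline we use that $\nu\perp\mathcal{P}_{\ord-1}$ with $\ord\ge\lfloor\theta/2\rfloor+1$, which is exactly the condition from the discussion after (\ref{hom_FT}) making the integral converge and making it equal the native-space norm. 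I would justify this by approximating $\nu\otimes\delta_\M$ by finitely supported measures in $\mathcal{P}_{\ord-1}^\perp$, for instance the measures $\tilde\nu_\Xi$ from (\ref{push-forward-approximant}), and using the identity for $\|\mu\|_{\mathcal{N}_m(\Phi)'}$.

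Since $\widehat{\rbf}>0$ away from the origin, the vanishing of this integral forces $\widehat{\nu\otimes\delta_\M}=0$, so $\nu=0$. Then $\sum_jc_jp_j=0$ on $\M$, and since $(p_j)$ is a basis of $\mathcal{P}_{\ord-1}(\M)$, all $c_j=0$. Hence $\mathcal{V}^\sharp$ is injective, and because it has index zero it is boundedly invertible for all $s$ and all $1<p<\infty$.

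\textbf{Consequence for $\mathcal{A}_p$.} Given $F\in W_p^{2\DiffOrd-1}(\M)$, solve $\mathcal{V}^\sharp(\nu,\vec c)=(F,0)$. This produces $\nu\in L_p$ with $\nu\perp\mathcal{P}_{\ord-1}$ and $F=\mathcal{V}\nu+\sum_j c_jp_j\in\mathcal{A}_p$. The reverse inclusion is the boundedness of $\mathcal{V}$.

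\textbf{Main obstacle.} I expect the hardest step to be the Fredholm and index-zero claim for $p\neq2$ and arbitrary $s$. This needs the pseudodifferential calculus on $W_p^s$ of the compact manifold, with ellipticity in the Duduchava setting. I would try to cite \cite{Duduchava} and \cite{HLayer} directly, and fall back on the symbol computation together with the parametrix argument outlined above.
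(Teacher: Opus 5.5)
Your proof is correct. Most of it runs parallel to the paper's argument:
\begin{itemize}
\item ellipticity of $\mathcal{V}$ from its principal symbol, and a parametrix;
\item elliptic regularity, which reduces the kernel of $\mathcal{V}^\sharp$ to smooth $\nu\perp\mathcal{P}_{\ord-1}(\M)$;
\item positivity of $\int_\M\int_\M\nu(x)\nu(z)\rbf(x-z)\,\diff\sigma(x)\,\diff\sigma(z)$, which forces $\nu=0$.
\end{itemize}
Your Fourier-side version of the last step ($\widehat{\rbf}>0$ off the origin) is equivalent to the paper's native-space argument using $C_c^\infty(\R^N)\subset\mathcal{N}(\rbf)$.

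The real difference is how you obtain surjectivity. The paper takes only closed range (finite codimension) from the parametrix. It then uses the symmetry $\kernel(x,y)=\kernel(y,x)$: the Banach adjoint of $\mathcal{V}^\sharp$ on $W_{p'}^{-(s+2\DiffOrd-1)}(\M)\times\R^{L_{\ord-1}}$ is given by the same kernel. The same injectivity argument therefore applies to the adjoint, and closed range plus an injective adjoint gives surjectivity. You instead prove $\mathrm{ind}\,\mathcal{V}=0$ by a linear homotopy through elliptic operators to $(1-\Delta_\M)^{(1-2\DiffOrd)/2}$. You then note that the finite-rank bordering keeps the index at zero, so injectivity alone suffices.

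What each route costs:
\begin{itemize}
\item Your route needs the sign (or at least the homotopy class) of the principal symbol and homotopy invariance of the index, but never identifies the adjoint on the dual scale.
\item The paper's route needs only a nonvanishing symbol, the duality $(W_p^s(\M))'=W_{p'}^{-s}(\M)$, and the symmetry of the kernel. That symmetry alone already gives index zero: the cokernel is $\ker\mathcal{V}^*$, which by symmetry and elliptic regularity equals $\ker\mathcal{V}$.
\end{itemize}

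The limiting step you worry about for the surface spline is harmless. Vanishing moments and a uniform total-variation bound give $|\widehat{\tilde\nu_\Xi}(\omega)|\le C\min(1,|\omega|^{\ord})$ uniformly in $\Xi$. This yields the integrable dominating function $C\min(1,|\omega|^{2\ord})|\omega|^{-2\DiffOrd}$, which is integrable near the origin exactly because $\ord>\theta/2$. Fatou's lemma alone would also suffice.
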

\begin{proof} 
One can calculate, 
as in 
\cite[Section 5.3]{HLayer},
 the principal symbol  
 $\sigma(\mathcal{V})$ of the 
 pseudodifferential operator 
 $\mathcal{V}$.
 In particular,
 the formula for $k=j=0$ in 
 \cite[Lemma 5.8]{HLayer}
shows that $\mathcal{V}$
is elliptic and 
polyhomogeneous of order 
$2k_0-1$.
 The standard theory of pseudodifferential operators permits us to show that $\mathcal{V}$ has a right parametrix 
$R$ which has order $1-2k_0$; i.e., $\mathcal{V}R = I+K$
is a perturbation of the identity $I$ by a
smoothing operator 
$K:
\mathcal{D}'(\M)
\to 
\mathcal{D}(\M)
$
(see \cite[Theorem 8.6]{Grubb}).
Consequently, 
$R:
W_p^{s+2\DiffOrd-1}(\M)
\to 
W_p^{s}(\M )
$ 
and
$K:
W_p^s(\M )\to 
W_p^s(\M)
$
is compact.
From this, it follows that 
the range of $\mathcal{V}R$ 
in $W_p^s(\M)$ has finite codimension,
and so 
the range of $\mathcal{V}$ is has 
finite codimension as well. It is therefore  closed.
Finally, because $L_{\ord-1} =\dim\mathcal{P}_{\ord-1}(\M)<\infty$, 
the range of
$\mathcal{V}^{\sharp}$ is closed in
$W_p^{s+2\DiffOrd-1}(\M)\times \R^{L_{\ord-1}}$.

 To show that $\mathcal{V}^{\sharp}$ is injective, 
 assume that $\mathcal{V}^{\sharp}(\nu,\vec{c})^T = 0$, which  ensures that $\langle \nu, p_j\rangle =0$ for all $j$, 
  and that 
  $\mathcal{V}\nu
  \in 
  \mathcal{P}_{\ord-1}(\M)
  \subset 
  C^{\infty}(\M)$. 
  By ellipticity of $\mathcal{V}$, it follows that 
  $\nu\in \mathcal{P}^{\perp}_{\ord-1}
  \cap C^{\infty}(\M)$. 
  In particular, $\nu$ is a regular distribution.
 It follows that
  $\nu\otimes \delta_{\partial \varUpsilon}$ is a Borel measure, and therefore a functional on $\mathcal{N}(\phi)$.
  Its Riesz representer is (up to a constant) $(\nu\otimes \delta_{\partial \varUpsilon}) *\phi$, which has 
  native space semi-norm  
  $$\|(\nu\otimes \delta_{\partial \varUpsilon}) *\phi\|_{\mathcal{N}(\phi)}^2 =
  \int_{\partial\varUpsilon} (\mathcal{V}\nu)(x)  \nu(x)\diff \sigma(x)  =  
  \int_{\partial\varUpsilon}  \bigl(\mathcal{V}\nu(x) + \sum c_j p_j(x)\bigr) \nu(x)\diff \sigma(x) = 0.$$
  The middle equation uses the fact that  
  $\nu\in \mathcal{P}^{\perp}_{\ord-1}
  $.
  This ensures
  that $\nu\otimes \delta_{\partial \varUpsilon}$ vanishes on $\mathcal{N}(\phi)$, which implies that $\nu=0$, 
  since $C_c^{\infty}(\R^N)\subset \mathcal{N}(\phi)$. 
  By linear independence of $(p_j)$, we have $\vec{c}=0$.

  By symmetry of the kernel,
   the adjoint map of $\mathcal{V}$ is
$(\mathcal{V})^*:
W_{p'}^{-(s+2\DiffOrd-1)}(\partial \Upsilon)
\to 
W_{p'}^{-s}(\partial \Upsilon)$, 
where 
$\mathcal{V}^* T(x) 
= \langle T, \kernel(x,\cdot)\rangle$.
Similarly,
the adjoint map of $\mathcal{V}^{\sharp}$ is 
$$
(\mathcal{V}^{\sharp})^*
:
W_{p'}^{-(s+2\DiffOrd-1)}(\M)\times \R^{L_{\ord-1}}
\to 
W_{p'}^{-s}(\M)\times \R^{L_{\ord-1}}.$$
Since this uses the same kernel 
(only the domain and codomain have changed), 
$(\mathcal{V}^{\sharp})^*$
is   injective, and
the fact that the range of $\mathcal{V}^{\sharp}$  is closed ensures that
 $$
 \mathrm{Range}(\mathcal{V}^{\sharp})
 = 
 \bigl(\mathrm{Null}(\mathcal{V}^{\sharp})^*\bigr)^{\perp} 
 =(\{0\})^{\perp}
 = W_p^{s+2\DiffOrd-1}(\M)\times \R^{L_{\ord-1}}.
 $$ 
 Thus
  $\mathcal{V}^{\sharp}:W_p^{s}(\M)\times \R^{L_{\ord-1}}\to W_p^{s+2\DiffOrd-1}(\M) \times \R^{L_{\ord-1}}$ 
  is boundedly invertible.
\end{proof}

\subsection{A Bernstein inequality}
Suppose $\M\subset \R^N$ is compact.
The restricted kernel $\kernel:\M\times \M\to \R:(x,y)\mapsto \rbf(x-y)$
for  $\rbf=\tphi$ or $\tps$
has the following two properties:
\begin{itemize}
\item $\mathcal{N}_m (\kernel)\sim W_2^{(\theta+d)/2}(\M)$,
\item $\kernel(\cdot,z) \in B_{2,\infty}^{\theta+d/2}(\M)$.
\end{itemize}
Note, in particular, that     there
is a bounded extension operator $E:W_2^{(\theta+d)/2}(\M)\to W_2^{(\theta+N)/2}(\R^N)$ which
guarantees, by the continuous embedding $W_2^{(\theta+N)/2}(\R^N)\subset \mathcal{N}(\rbf)$, that
$$
|u|_{\mathcal{N}(\kernel)}
\le 
|Eu|_{\mathcal{N}(\rbf)}
\le
\|Eu \|_{W_2^{(\theta+N)/2}(\R^N)}
\le C\|u\|_{W_2^{(\theta+d)/2}(\M)}.
$$

The following Bernstein inequality treats functions in  $V_{\Xi,m}(\kernel)$
by relating their high smoothness norm (smoothness between $\theta/2+d/2$ and $\theta+d/2$) to the 
native space norm. In this subsection and the next we
restrict focus to smoothness measured in $L_2$, where $W_2^t(\M) = B_{2,2}^t(\M)$.
\begin{theorem} 
\label{T_Bernstein}
If $0\le\nu \le  s<\theta/2$
there is a $C>0$ so that the following holds:
for  $\Xi\subset \M$
and
$U = \sum_{x_j\in\Xi} a_j \Phi_{\theta}(\cdot,x_j)+P \in V_{\Xi,m}(\Phi_{\theta})$,
%
$$
 \|U\|_{W_2^{(\theta+d)/2+s}(\M)}
 \le  
C\begin{cases}  \|U\|_{L_2(\M)}+ q^{-s} |U|_{\mathcal{N}_{\ord}(\kernel)}&m\ge 1,\\ 
 q^{-s} \|U\|_{\mathcal{N}(\kernel)}&m=0.
 \end{cases}
$$
%
 \end{theorem}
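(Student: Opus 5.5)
The plan is to lift the problem to $\R^N$, where $U$ is the trace of a convolution of $\rbf$ with a discrete measure, and to reduce the claim to a Bernstein-type inequality for finitely supported measures in negative-order Sobolev spaces. Write $U=\mathrm{Tr}_{\M}u$ with $u=\rbf*\mu+P$, where $\mu=\sum_{x_j\in\Xi}a_j\delta_{x_j}$ and $(a_j)\in\Lambda_{\Xi,\ord}$. Set $\tau=\theta+N$ and $t=(\theta+d)/2+s$.

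By Lemma \ref{L_trace} with $p=q=2$, we have $t+k/2=\tau/2+s$, so
$$\|U-P\|_{W_2^{t}(\M)}\le C\|\rbf*\mu\|_{B_{2,2}^{\tau/2+s}(\R^N)}.$$
For the Mat\'ern kernel, (\ref{Matern_Isomorphism}) bounds the right-hand side by $C\|\mu\|_{W_2^{s-\tau/2}(\R^N)}$. For the surface spline I would use a cutoff $\tau$ equal to $1$ near $\M$, since only the values on $\M$ matter. With it I would invoke the local estimate (\ref{TPS_mapping}), or else Lemma \ref{L_TPS_seminorm} for the high-frequency part together with a crude bound for the $\LF$ part.

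On the other side, the native seminorm is exactly
$$|U|_{\mathcal{N}_\ord(\kernel)}^2=\sum_{j,l}a_ja_l\rbf(x_j-x_l)=\int|\widehat\mu(\xi)|^2\widehat\rbf(\xi)\,\diff\xi .$$
This is $\sim\|\mu\|^2_{W_2^{-\tau/2}}$ for the Mat\'ern kernel, and equals the homogeneous version $\int|\widehat\mu|^2|\xi|^{-\tau}$ for the surface spline, which is finite because $\mu\perp\mathcal{P}_{\ord-1}$.

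The core step, which I expect to be the main obstacle, is the discrete Bernstein inequality: for $q$-separated $\Xi$ and $0\le s<\theta/2$,
$$\int|\widehat\mu(\xi)|^2(1+|\xi|^2)^{s-\tau/2}\,\diff\xi\le Cq^{-2s}\int|\widehat\mu(\xi)|^2|\xi|^{-\tau}\,\diff\xi .$$
I would split the frequencies at $|\xi|=1/q$.

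On $|\xi|\le 1/q$ the estimate is immediate: $(1+|\xi|^2)^{s}\le Cq^{-2s}$, and $(1+|\xi|^2)^{-\tau/2}\le|\xi|^{-\tau}$ there.

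On $|\xi|>1/q$ the hypothesis $s<\theta/2$ gives $2(s-\tau/2)<-N$, so the weight $|\xi|^{2s-\tau}$ is integrable at infinity. I would bound the high-frequency part by $Cq^{\tau-2s-N}\sum|a_j|^2$ using the standard off-diagonal decay argument: the inverse Fourier transform of $|\xi|^{2s-\tau}\mathbf 1_{|\xi|>1/q}$, smoothed, decays quickly on scale $q$, and separation then controls the off-diagonal sum by Gershgorin. The matching lower bound $\int|\widehat\mu|^2|\xi|^{-\tau}\ge c\,q^{\tau-N}\sum|a_j|^2$ is the Narcowich--Ward lower bound for the smallest eigenvalue of the collocation matrix. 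I would prove it by inserting a bump $\widehat{g}$ supported in $|\xi|\le 1/q$ and dominating $|\xi|^{-\tau}\ge cq^{\tau}\widehat g(q\xi)$, with the Gram matrix of $g(\cdot/q)$ diagonally dominant by separation. Dividing the two bounds gives the factor $q^{-2s}$, and the case $\ord=0$ follows at once.

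It remains to treat the polynomial part when $\ord\ge1$, which is where $\|U\|_{L_2(\M)}$ enters. Since $\mathcal{P}_{\ord-1}(\M)$ is finite dimensional, $\|P\|_{W_2^t(\M)}\le C\|P\|_{L_2(\M)}\le C(\|U\|_{L_2(\M)}+\|U-P\|_{L_2(\M)})$. I would bound $\|U-P\|_{L_2(\M)}$ by $C\|U-P\|_{W_2^t(\M)}$, which is already controlled by $Cq^{-s}|U|_{\mathcal{N}_\ord(\kernel)}$. If the low-frequency part of the surface-spline estimate instead produces an extra inhomogeneous term, I expect it can be absorbed using the $\mathcal{N}_\ord$ seminorm modulo polynomials plus $\|U\|_{L_2(\M)}$.
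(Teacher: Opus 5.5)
Your argument is correct, but it is organized differently from the paper's proof. The paper treats the Euclidean step as a black box. For $m=0$ it composes the trace theorem with the Bernstein inequality $\|u\|_{W_2^{(\theta+N)/2+s}(\R^N)}\le Cq^{-s}\|u\|_{\mathcal{N}(\rbf)}$ of \cite[Theorem 3.1]{HR-Extending}. For $m\ge 1$ it works with $u=\rbf*\mu+P$ itself on a ball $B\supset\M$. There it subtracts an averaged Taylor polynomial $Q$ by Bramble--Hilbert and applies the seminorm version of that inequality. It then uses Lemma \ref{L_equiv} (with $X=L_2(\M)$) to trade $Q$ for the $L_2(\M)$-orthogonal projection of $U$, which is how $\|U\|_{L_2(\M)}$ enters.

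You instead prove the Euclidean Bernstein inequality directly for the discrete measure $\mu$, in three steps:
\begin{itemize}
\item a frequency split at $|\xi|\sim 1/q$;
\item a Schur/Gershgorin bound on the high frequencies, which is exactly where $s<\theta/2$ is used, since it makes $|\xi|^{2s-\theta-N}$ integrable at infinity;
\item the Narcowich--Ward lower bound $\int|\widehat{\mu}|^2|\xi|^{-\theta-N}\,\diff\xi\ge c\,q^{\theta}\|a\|_{\ell_2}^2$.
\end{itemize}
You estimate $\rbf*\mu$ rather than $u$, and measure $\mu$ in the inhomogeneous norm of $W_2^{s-(\theta+N)/2}(\R^N)$, whose low-frequency part is automatically dominated by the homogeneous native seminorm. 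As a result, the polynomial part costs only a triangle inequality and norm equivalence on the finite-dimensional space $\mathcal{P}_{m-1}(\M)$; you need neither Bramble--Hilbert nor Lemma \ref{L_equiv}.

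The price is that for surface splines you rely on the local mapping estimate (\ref{TPS_mapping}), which the paper states without proof. For $p=2$ it is easy to justify: split $\rbf=\rbf\chi+\rbf(1-\chi)$ with a cutoff $\chi$ at the origin. Then $|\widehat{\rbf\chi}(\xi)|\le C(1+|\xi|^2)^{-(\theta+N)/2}$, and the smooth remainder is handled by (\ref{finite_order}). Your route is self-contained and makes the role of $s<\theta/2$ transparent; the paper's is shorter given the cited result.

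Three details need tightening.
\begin{enumerate}
\item For the Mat\'ern kernel the native (semi)norm is $\int|\widehat{\mu}|^2(1+|\xi|^2)^{-(\theta+N)/2}\,\diff\xi$. This is smaller than the homogeneous quantity on the right of your core inequality. So for $m=0$, and for Mat\'ern with $m\ge 1$, you need the inhomogeneous lower bound $\int|\widehat{\mu}|^2(1+|\xi|^2)^{-(\theta+N)/2}\,\diff\xi\ge c\,q^{\theta}\|a\|_{\ell_2}^2$. It follows from the same bump argument because $q\le\mathrm{diam}(\M)$ is bounded. Boundedness of $q$ is also what gives $(1+|\xi|^2)^{s}\le Cq^{-2s}$ on $|\xi|\le 1/q$.
\item If $\widehat{g}$ is supported in the unit ball, the Gram matrix $\bigl(g((x_j-x_l)/q)\bigr)_{j,l}$ is not automatically diagonally dominant for $q$-separated points. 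Either dilate the frequency support to radius $M/q$ with $M$ large, or choose $g$ supported in the open unit ball with $0\le\widehat{g}(\eta)\le C\min(1,|\eta|^{-\theta-N})$. The latter makes the Gram matrix diagonal.
\item In your fallback via Lemma \ref{L_TPS_seminorm}, a crude bound of the $\LF$ term through $\sum_j|a_j|$ would lose negative powers of $q$. Bound it instead by duality with the native seminorm, or simply use (\ref{TPS_mapping}) as in your main line.
\end{enumerate}
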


 To treat the CPD case, we use the following lemma, which permits us
 to treat the quotient spaces of the form $W_2^{t}/\mathcal{P}_{m-1}$.
  We note that this quotient space was also used in the original version of the Bramble-Hilbert theorem, see \cite{Bramble:Hilbert:1970}.
%
%
%
\begin{lemma}\label{L_equiv}
 If $Y$ and $X$ are Hilbert spaces with continuous embedding $Y\subset X$ and if $\varPi\subset Y$ is finite dimensional, let $P:X\to \varPi$
 be the $X$ orthogonal projector onto $\varPi$. Then  
 $$\langle y,z\rangle_\circ :Y\times Y\to \R:(y,z)\mapsto \langle y,z\rangle_X + \langle y-Py, z-Pz\rangle_Y$$
 induces a norm $\|y\|_\circ$ which satisfies, for constants $0<c_\circ\le C_{\circ}<\infty$ the estimate
  $$c_{\circ} \|y\|_Y\le \|y\|_\circ\le C_{\circ} \|y\|_Y,$$
 and $P:Y\to \varPi$ is orthogonal with respect to this inner product.
 \end{lemma}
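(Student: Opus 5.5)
The plan is to verify that $\langle\cdot,\cdot\rangle_\circ$ is a bounded, coercive inner product on $Y$, and then check orthogonality of $P$ directly. Because $Y\subset X$ continuously, there is a constant $C_E$ with $\|y\|_X\le C_E\|y\|_Y$ for all $y\in Y$. The form is evidently symmetric and bilinear. Since $P$ is linear, $y\mapsto y-Py$ is a linear map from $Y$ into $Y$: indeed $Py\in\varPi\subset Y$.

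\textbf{Upper bound.} I first bound $\|y-Py\|_Y\le \|y\|_Y+\|Py\|_Y$. The space $\varPi$ is finite dimensional, so the norms $\|\cdot\|_X$ and $\|\cdot\|_Y$ are equivalent on it, say $\|\pi\|_Y\le C_\varPi\|\pi\|_X$ for $\pi\in\varPi$. Hence
$$\|Py\|_Y\le C_\varPi\|Py\|_X\le C_\varPi\|y\|_X\le C_\varPi C_E\|y\|_Y .$$
Combining these bounds gives $\|y\|_\circ^2\le \bigl(C_E^2+(1+C_\varPi C_E)^2\bigr)\|y\|_Y^2$.

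\textbf{Lower bound.} Write $y=(y-Py)+Py$. Then
$$\|y\|_Y\le \|y-Py\|_Y+\|Py\|_Y\le \|y-Py\|_Y+C_\varPi\|y\|_X\le (1+C_\varPi)\,\|y\|_\circ\cdot\sqrt2 ,$$
using $a+b\le\sqrt2(a^2+b^2)^{1/2}$. This gives $c_\circ=\bigl(\sqrt2(1+C_\varPi)\bigr)^{-1}$. In particular $\|y\|_\circ=0$ forces $y=0$, so $\langle\cdot,\cdot\rangle_\circ$ is a genuine inner product, and its norm is equivalent to $\|\cdot\|_Y$, so $(Y,\langle\cdot,\cdot\rangle_\circ)$ is complete.

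\textbf{Orthogonality.} For $y\in Y$ and $\pi\in\varPi$ I compute $\langle y-Py,\pi\rangle_\circ$. The $X$-part is $\langle y-Py,\pi\rangle_X=0$, since $P$ is the $X$-orthogonal projector onto $\varPi$. For the $Y$-part, note that $P\pi=\pi$ and $P(y-Py)=Py-Py=0$ because $P^2=P$. Thus the second term is $\langle y-Py,\pi-\pi\rangle_Y=0$. It follows that $y-Py\perp_\circ\varPi$, and the restriction $P|_Y$ is the $\circ$-orthogonal projector onto $\varPi$.

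The only point needing care is the finite-dimensional norm equivalence constant $C_\varPi$; beyond that, no step is a real obstacle.
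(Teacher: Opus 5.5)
Your proof is correct, and it reaches the lower bound $c_\circ\|y\|_Y\le\|y\|_\circ$ by a different route from the paper. The paper normalizes $\|y\|_X\le\|y\|_Y$ and proves only the upper bound directly, via $\|Py\|_Y\le c_B\|y\|_Y$. It then gets the reverse inequality from the open mapping theorem applied to the identity $(Y,\|\cdot\|_Y)\to(Y,\|\cdot\|_\circ)$, and it states the $\circ$-orthogonality of $P$ without checking it.

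You instead split $y=(y-Py)+Py$ and use $\|Py\|_Y\le C_\varPi\|Py\|_X\le C_\varPi\|y\|_X$. This bounds $\|y\|_Y$ directly by the two terms that make up $\|y\|_\circ$. Your route gives explicit constants. More importantly, it does not need completeness of $(Y,\|\cdot\|_\circ)$, which the open mapping theorem requires and the paper does not verify.

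Given the upper bound, that completeness is essentially equivalent to the lower bound you prove. To see it directly, note that a $\circ$-Cauchy sequence is Cauchy in $X$, so its $\varPi$-components are Cauchy in $Y$ by finite dimensionality. That is your triangle-inequality argument in another form. Your explicit computation $\langle y-Py,\pi\rangle_\circ=0$ for $\pi\in\varPi$, using $P^2=P$ and $P\pi=\pi$, also supplies the orthogonality claim the paper only asserts. The paper's version is shorter but rests on that unstated completeness step; yours is self-contained and elementary.
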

%
%
%
 \begin{proof} Assume without loss that $\|y\|_X\le \|y\|_Y$.
 There exists a  constant $c_B$ so that $\|p\|_Y\le c_B\|p\|_X$ for all $p\in \varPi$. Then $P$ is bounded on $Y$,
 with $\|P y\|_Y \le c_B \| y\|_Y$. It follows that  $\langle y,z\rangle_{\circ}$ is an inner product on $Y$.
Because $\|y\|_\circ \le \sqrt{2+c_B}\|y\|_Y$, the open mapping theorem guarantees $\|y\|_Y\le \Gamma \|y\|_\circ$.
 \end{proof}
 %
 %
 %
 %
 %
 \begin{proof}[Proof of Theorem \ref{T_Bernstein}]
 For $\ord=0$, we have $u=\sum_{x_j\in\Xi} a_j \rbf(\cdot-x_j) \in V_X(\rbf)$ and $U=\mathrm{Tr}\,u$. In that case,
$$\|U\|_{W_2^{(\theta+d)/2+s}(\M)} \lesssim   \|u\|_{W_2^{(\theta+N)/2+s}(\R^N)}\lesssim  q^{-s} \|u\|_{\mathcal{N}(\Phi_{\theta})} 
=  q^{-s}\|U\|_{\mathcal{N}(\rbf)}$$
follows where the first inequality is the trace theorem, the second inequality is the Euclidean Bernstein estimate
\cite[Theorem 3.1]{HR-Extending}, and the final equality follows because the two native space norms, squared, equal
 $\sum a_j a_k \rbf(x_j-x_k) = \sum a_j a_k \kernel(x_j,x_k)$.
 
 Let $u=\sum_{x_j\in\Xi} a_j \rbf(\cdot-x_j) +P\in V_{X,m}(\rbf)$.
 It follows that 
$U=\mathrm{Tr}_{\M}\,u.$
Thus 
$$\|U \|_{W_2^{(\theta+d)/2+s}(\M)}\le C_1 \|u\|_{W_2^{(\theta+N)/2+s}(B)},$$
where $B\subset \R^N$ is a ball sufficiently large that $\M\subset B$. 
Let $Q\in \mathcal{P}_{m-1}(\R^N)$ denote the 
average Taylor polynomial of $u$ over $B$;  then by
 Bramble-Hilbert,
$\|u-Q\|_{W_2^{(\theta+N)/2+s}(B)}\le C_2 |u|_{W_2^{(\theta+N)/2+s}(B)}$.
Applying the trace estimate followed by 
the 
 Euclidean Bernstein estimate \cite[Theorem 3.1]{HR-Extending},
 gives, for $C_3=C_1C_2$ and an enlarged constant $C_4$, the estimate
$$
\|U-Q \|_{W_2^{(\theta+d)/2+s}(\M)}\le 
 C_3 |u|_{W_2^{(\theta+N)/2+s}(\R^N)}
\le  C_4 q^{-s} |u|_{\mathcal{N}_{\ord}(\rbf)}.
$$
Since $ \sum a_j a_k \rbf(x_j - x_k)=   \sum a_j a_k \kernel(x_j,x_k)$,
it follows that $\|U-Q \|_{W_2^{(\theta+d)/2+s}(\M)}\le C_4 q^{-s} |U|_{\mathcal{N}_{\ord}(\kernel)}$.

Apply Lemma \ref{L_equiv} with $X=L_2(\M)$, $Y=W_2^{(\theta+d)/2+s}$
and $\|f\|_{\circ}^2 := \|f\|_{L_2(\M)}^2 + \|f-Pf\|_{W_2^{(\theta+d)/2+s}(\M)}^2$.
It then follows that 
$ \|U-P U\|_{\circ} \le \|U-Q\|_{\circ}$,
since $P$ is the orthogonal projector.
By metric equivalence,
$\|U-PU\|_{W_2^{(\theta+d)/2+s}(\M)} \le C_4 \|U-Q\|_{W_2^{(\theta+d)/2+s}(\M)}$
holds, and so
$$
\|U\|_{W_2^{(\theta+d)/2+s}(\M)}
\le C_4( \|PU\|_{L_2(\M)} + \|U-PU\|_{W_2^{(\theta+d)/2+s}(\M)})
\le C_5( \|U\|_{L_2(\M)} +  q^{-s} |U|_{\mathcal{N}_{\ord}(\kernel)})
$$
holds and the theorem follows.
\end{proof}

%
%
\subsection{Interpolation error}
The Bernstein inequality allows novel interpolation error estimates for error in Sobolev norms $W_2^{t}(\M)$ with
$t>(\theta+d)/2$.
We begin by recalling the standard doubling result:
%
%
\begin{proposition}
\label{P_doubling}
For  $Q\in\mathcal{P}_{m-1}(\R^N)$ and $\nu\in L_2(\M)$  with $\nu\perp \mathcal{P}_{m-1}(\M)$,
let $f=\int_{\M} \nu(y)\rbf(\cdot-y)\diff y+Q$. For $t<(\theta+d)/2$ and a sufficiently dense $\Xi\subset\M$ (i.e., $h=\max_{x\in\M} \dist(x,\Xi)$ is small enough), we have
\begin{eqnarray*}
\|f-I_{\Xi} f\|_{W_2^{t}(\M)}&\le& C h^{\theta+d-t} \|\nu\|_{L_2(\M)}\\
|f-I_{\Xi} f|_{\mathcal{N}_m(\rbf)}&\le& C h^{(\theta+d)/2} \|\nu\|_{L_2(\M)}
\end{eqnarray*}
\end{proposition}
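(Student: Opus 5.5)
We follow Schaback's doubling argument on the manifold, using the native space of the restricted kernel $\kernel$. The key identity comes first. Write $F=\mathrm{Tr}_{\M} f = \int_{\M}\kernel(\cdot,y)\nu(y)\diff\sigma(y)+Q$. Since $\nu\perp\mathcal{P}_{m-1}(\M)$, for every $g\in\mathcal{N}_m(\kernel)$ we claim
\[
\langle F, g\rangle_{\mathcal{N}_m(\kernel)}=\int_{\M} g(y)\nu(y)\diff\sigma(y).
\]
We verify this first for $g\in V_{\Xi',m}(\kernel)$ using the representation formula (\ref{rep_formula}) together with the symmetry of $\kernel$; the polynomial part of $g$ is annihilated by $\nu$. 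We then pass to all of $\mathcal{N}_m(\kernel)$ by density. That the pairing is bounded follows from $|\int g\nu\,\diff\sigma|\le\|\nu\|_{L_2}\|g-\Pi g\|_{L_2}$ for a suitable polynomial projection $\Pi$, combined with $\|g-\Pi g\|_{L_2(\M)}\lesssim |g|_{\mathcal{N}_m(\kernel)}$. Alternatively, we can approximate $\nu\,\diff\sigma$ by the discrete measures $\tilde\nu_{\Xi'}$ of Section~\ref{S_potentials}, which lie in $\mathcal{P}_{m-1}^\perp$.

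Next we set $e=F-I_\Xi F$. By the standard orthogonality of the interpolant in the native semi-inner product, $|e|^2_{\mathcal{N}_m(\kernel)}=\langle F,e\rangle_{\mathcal{N}_m(\kernel)}=\int_{\M} e\,\nu\,\diff\sigma\le \|\nu\|_{L_2(\M)}\|e\|_{L_2(\M)}$. We then apply the basic native-space sampling inequality on $\M$ with $t=0$. The error $e$ vanishes on $\Xi$, so the zeros lemma on manifolds (as in \cite{FW}) gives $\|e\|_{L_2(\M)}\le Ch^{(\theta+d)/2}\|e\|_{W_2^{(\theta+d)/2}(\M)}$. We need this in seminorm form, $\|e\|_{L_2(\M)}\le Ch^{(\theta+d)/2}|e|_{\mathcal{N}_m(\kernel)}$, which we obtain using $\mathcal{N}_m(\kernel)\sim W_2^{(\theta+d)/2}(\M)$ modulo $\mathcal{P}_{m-1}$ and the fact that $h$ is small, so that polynomials are controlled by their values on $\Xi$ (unisolvency). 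Combining, $|e|^2_{\mathcal{N}}\le C h^{(\theta+d)/2}\|\nu\|_{L_2}|e|_{\mathcal{N}}$, which gives the second estimate.

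For the first estimate we use the sampling inequality for general $0\le t<(\theta+d)/2$: since $e|_\Xi=0$, we have $\|e\|_{W_2^t(\M)}\le Ch^{(\theta+d)/2-t}|e|_{\mathcal{N}_m(\kernel)}$. Inserting the seminorm bound yields $h^{\theta+d-t}\|\nu\|_{L_2}$.

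The main obstacle is the CPD case. There we must justify the seminorm version of the sampling inequality, i.e.\ control the $\mathcal{P}_{m-1}$ component of $e$ via its zeros on a dense $\Xi$. We must also verify the reproduction identity for $F$ in the native space of the \emph{restricted} kernel. For the latter we note that the native-space pairing is computed through the double sums $\sum a_ja_k\kernel(x_j,x_k)$, which are identical for $\rbf$ and for $\kernel$.
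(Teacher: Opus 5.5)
Your proposal is correct and is essentially the paper's argument: Schaback's doubling trick, which combines the reproduction identity $\langle f,g\rangle_{\mathcal{N}_m}=\int_{\M} g\,\nu\,\diff\sigma$ and the orthogonality of the interpolation error (giving $|e|_{\mathcal{N}_m}^2\le\|\nu\|_{L_2(\M)}\|e\|_{L_2(\M)}$) with a zeros/sampling inequality in seminorm form, $\|e\|_{W_2^t(\M)}\le Ch^{(\theta+d)/2-t}|e|_{\mathcal{N}_m}$. The differences are minor: you work in $\mathcal{N}_m(\kernel)$ rather than the Euclidean $\mathcal{N}_m(\rbf)$ (the seminorms coincide, as you note), and you control the $\mathcal{P}_{m-1}$ component through unisolvency of the dense set $\Xi$, whereas the paper uses Lemma~\ref{L_equiv} with Bramble--Hilbert and then absorbs the resulting $h^{(\theta+d)/2-t}\|e\|_{L_2(\M)}$ term into the left-hand side for small $h$.
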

%
\begin{proof}
Because the proof for the Mat{\'e}rn case $\rbf=\tphi$ is substantially easier, we assume $\rbf=\tps$.

By the zeros estimate (\cite[Lemma 10]{FW} \cite[Theorem A.11]{HNW-poly}) we have
$$\|f-I_\Xi f\|_{W_2^{t}(\M)} \le Ch^{(\theta+d)/2-t} \|f-I_\Xi f\|_{W_2^{(\theta+d)/2}(\M)}.$$
Let $B \subset \R^{N}$ contain $\M$ and let $Q \in\mathcal{P}_{m-1}(\R^N)$ be the orthogonal projection with
respect to $W_2^{(\theta+N)/2}(B)$.
By applying Lemma \ref{L_equiv}  with $X=L_2(\M)$ and $Y= W_2^{(\theta+d)/2}(\M)$ 
and Bramble-Hilbert, as in the proof of
Theorem \ref{T_Bernstein}, we have
$$\|f-I_\Xi f\|_{W_2^{(\theta+d)/2}(\M)}\le C\left(\|f-I_{\Xi} f\|_{L_2(\M)} +|f-I_{\Xi} f|_{W_2^{(\theta+N)/2}(\R^N)}\right).$$
By rearranging terms, using $h^{(\theta+d)/2 -t} \|f-I_{\Xi}f\|_{L_2(\M)} \le \frac12 \|f-I_{\Xi}f\|_{W_2^t(\M)}$, we have
$$\|f-I_{\Xi} f\|_{W_2^t(\M)} 
\le 
Ch^{(\theta+d)/2-t} |f-I_{\Xi} f|_{\mathcal{N}_m(\rbf)}.$$
Note that for a finite set $Z\subset \R^N$
and any suitable finite measure
$\mu= \sum_{\zeta \in Z}  a_z \delta_{z}$ the semi-inner product $\langle f, \rbf*\mu\rangle_{\mathcal{N}_m(\rbf)} = \sum_{z\in Z} a_z\int_{\M} \nu(y)\rbf(z-y)\diff \sigma(y) =\int_{\M}  \bigl(\rbf*\mu\bigr)(y)\nu(y)\diff \sigma(y)$.
By continuity, for any $g\in \mathcal{N}_m(\rbf)$, $\langle f,g\rangle_{\mathcal{N}(\rbf)} = \int_{\M} \nu(y)g(y)\diff \sigma(y)$.
Thus $|f-I_{\Xi} f|_{\mathcal{N}_m(\rbf)}^2 \le \|\nu\|_{L_2(\M)} \|f-I_{\Xi}f\|_{L_2(\M)}$.
This implies $\|f-I_{\Xi} f\|_{W_2^{t}(\M)}
\le  C\|\nu\|_{L_2(\M)}^{1/2}  h^{(\theta+d)/2-t}  \|f-I_{\Xi}f\|_{L_2(\M)}^{1/2}$
and  
$\|f-I_{\Xi} f\|_{L_2(\M)}
\le h^{\theta+d} \|\nu\|_{L_2(\M)}$. 
From this estimate, the proposition follows.
\end{proof}
We point out that Proposition \ref{P_doubling} with $t=0$ provides, for a quasi-uniform point set $\Xi \subset \M$ with $h\sim |\Xi|^{-\frac{1}{d}}$, the optimal approximation rate for all even non-linear approximation methods using function evaluations on $\Xi$, see \cite{Krieg:Sonnleitner:2025}.

\begin{theorem}\label{main}
Suppose
$\Omega\subset\M$ 
is a compact subset of $d$-dimensional submanifold $ \M\subset \R^N$, 
and that 
Assumption \ref{A_LPR}   holds on $\Omega$. 
Let $\rbf  =  \tphi$ or $\tps$ so that $\rbf$ is CPD with respect to $\ord \in\N_0$.
For a target function 
 $f =  \int_{\Omega} \Phi_{\theta}(\cdot,z) \nu(z)\diff \sigma(z) +P$ 
 where
$\Phi_{\theta}:\M\times \M \to \R:(x,y)\mapsto \rbf(x-y)$, 
$\nu\in L_2(\M)$ satisfies $\int_{\Omega} \nu(z) Q(z) \diff \sigma(z)=0$ for all $Q\in \mathcal{P}_{\ord-1}(\M)$,
and    $P\in\mathcal{P}_{\ord-1}(\M)$, if
 $0\le s < \theta/2$, 
 the  interpolation estimate
 $$\|f- I_{\Xi} f\|_{W_2^{(\theta+d)/2+s}(\M)}\le C (\rho_{\Xi})^s h^{(\theta+d)/2-s} \|\nu\|_{L_2(\M)}$$
 holds. Here $\rho_{\Xi} := h/q$ is the
{\em mesh ratio} of $\Xi$.
 \end{theorem}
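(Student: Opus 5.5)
The plan is the standard ``approximate, then use Bernstein'' argument. Write $f-I_\Xi f = (f-U) + (U - I_\Xi f)$, where $U\in V_{\Xi,\ord}(\Phi_\theta)$ is the quasi-interpolant $U=\mathrm{Tr}_{\M}T_\Xi f$ from Theorem \ref{T_manifold_approximation}. Set $t=(\theta+d)/2+s$. Because $s<\theta/2$, we have $t<\theta+d/2$, so Theorem \ref{T_manifold_approximation} with $p=q=2$ applies and gives
$$\|f-U\|_{W_2^{t}(\M)}\le C h^{\theta+d-t}\|\nu\|_{L_2(\M)}=Ch^{(\theta+d)/2-s}\|\nu\|_{L_2(\M)}.$$
This bound is already of the required form, since $\rho_\Xi\ge 1$. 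It remains to treat the difference $W:=U-I_\Xi f$, which lies in $V_{\Xi,\ord}(\Phi_\theta)$ because both $U$ and $I_\Xi f$ do.

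Apply the Bernstein inequality, Theorem \ref{T_Bernstein}, to $W$:
$$\|W\|_{W_2^t(\M)}\le C\bigl(\|W\|_{L_2(\M)}+q^{-s}|W|_{\mathcal{N}_\ord(\Phi_\theta)}\bigr),$$
with the $L_2$ term absent when $\ord=0$. Bound each term by the triangle inequality through $f$.

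For the $L_2$ term, Proposition \ref{P_doubling} with $t=0$ gives $\|f-I_\Xi f\|_{L_2}\le Ch^{\theta+d}\|\nu\|$. Theorem \ref{T_manifold_approximation} with $s\to0$ (or any small positive order, which dominates $L_2$) gives a bound of order $h^{\theta+d-\epsilon}$. Both are far smaller than $h^{(\theta+d)/2-s}$.

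For the native space term, Proposition \ref{P_doubling} gives $|f-I_\Xi f|_{\mathcal{N}_\ord}\le Ch^{(\theta+d)/2}\|\nu\|$. For $|f-U|_{\mathcal{N}_\ord}$, use the extension/trace bound $|g|_{\mathcal{N}(\Phi_\theta)}\le C\|g\|_{W_2^{(\theta+d)/2}(\M)}$ stated before Theorem \ref{T_Bernstein}. Combined with Theorem \ref{T_manifold_approximation} at smoothness $(\theta+d)/2$, this yields $Ch^{(\theta+d)/2}\|\nu\|$.

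For CPD kernels this step needs care, because the native seminorm is only controlled modulo polynomials. The cleanest route is to work directly in $\R^N$. There $f-T_\Xi f=\rbf*(\tilde\nu-\tilde\nu_\Xi)$ with $\tilde\nu-\tilde\nu_\Xi\perp\mathcal{P}_{\ord-1}$, so its native seminorm equals $\|\tilde\nu-\tilde\nu_\Xi\|_{\mathcal{N}'}$. By the Fourier characterization of $\mathcal{N}_\ord(\rbf)$ and Lemma \ref{L_TPS_seminorm}, this is bounded by a $B_{2,2}^{-(\theta+N)/2}$-type norm of $\tilde\nu-\tilde\nu_\Xi$, and the appendix estimate (Lemma \ref{L_measure_approx}) bounds that by $Ch^{(d-N)/2+(\theta+N)/2}\|\nu\|=Ch^{(\theta+d)/2}\|\nu\|$. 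Restriction to $\M$ does not increase the native seminorm.

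Combining the pieces,
$$\|W\|_{W_2^t}\le C\bigl(h^{\theta+d}+q^{-s}h^{(\theta+d)/2}\bigr)\|\nu\|\le C\rho_\Xi^s h^{(\theta+d)/2-s}\|\nu\|,$$
and adding the bound on $f-U$ finishes the proof.

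The main obstacle is the native space estimate for $f-U$ in the conditionally positive definite case. There one must confirm that the measure-approximation lemma applies at the smoothness $-(\theta+N)/2$, which requires $L$ large enough; this is consistent with the hypothesis $L>\theta+N$. One must also confirm that the native seminorm of a convolution against a polynomial-annihilating compactly supported measure is controlled by its negative Besov norm; the low-frequency part is handled by the vanishing moments, as in Lemma \ref{L_side_condition}.
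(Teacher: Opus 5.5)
Your proposal is correct and follows the paper's proof: approximate by $U=\mathrm{Tr}_{\M}T_\Xi f$, apply the Bernstein inequality (Theorem \ref{T_Bernstein}) to $U-I_\Xi f$, and bound its $L_2$ and native-space parts by the triangle inequality through $f$, using Theorem \ref{T_manifold_approximation} and Proposition \ref{P_doubling}. The detour through $\R^N$ for $|f-U|_{\mathcal{N}_\ord}$ in the CPD case is unnecessary: you only need an upper bound of the seminorm by the full norm $\|f-U\|_{W_2^{(\theta+d)/2}(\M)}$, which is the extension/trace inequality stated before Theorem \ref{T_Bernstein}, and that involves no quotient issue.
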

\begin{proof}
Under conditions of Theorem \ref{T_manifold_approximation}, 
the approximant
$U_f = T_{\Xi} f \in V_{\Xi,\ord}(\Phi_{\theta})$  satisfies
\begin{eqnarray}
\|f-U_f\|_{W_2^{(\theta+d)/2+s}(\M)} 
&\le& C h^{(\theta+d)/2-s}\|\nu\|_{L_2(\M)},\label{strong}\\
\|f-U_f\|_{L_2(\M)} 
&\le& C h^{\theta+d}\|\nu\|_{L_2(\M)}.\label{weak}
\end{eqnarray}
It follows that
\begin{eqnarray*}
\|f-I_{\Xi} f\|_{W_2^{(\theta+d)/2+s}(\M)} &\le & \|f-U_f\|_{W_2^{(\theta+d)/2+s}(\M)} +  \|U_f-I_{\Xi} f\|_{W_2^{(\theta+d)/2+s}(\M)} \\
&\le& 
C \left(h^{(\theta+d)/2-s} \|\nu\|_{L_2(\M)} + \|U_f-I_{\Xi}  f\|_{L_2(\M)}+ q^{-s} |U_f-I_{\Xi} f|_{\mathcal{N}(\kernel)}\right).
\end{eqnarray*}
The second inequality follows by applying  Theorem \ref{T_Bernstein}  to $U_f-I_{\Xi}f\in V_{\Xi,\ord}(\kernel)$.
The theorem follows by applying
 the triangle inequality
$$\| U_f-I_{\Xi} f\|_{X}\le \|f-U_f\|_{X} +\|f-I_{\Xi} f\|_{X}$$
(with $X=L_2(\M)$ or $\mathcal{N}(\kernel)$) followed by
inequalities (\ref{strong}) and (\ref{weak}) and  Proposition \ref{P_doubling}.
\end{proof}
 
%
%
%
\subsection{Numerical results on spheres\label{sec:numerics_higher_norms}}
\begin{figure}[htb]
\centering
\includegraphics[width=0.5\textwidth]{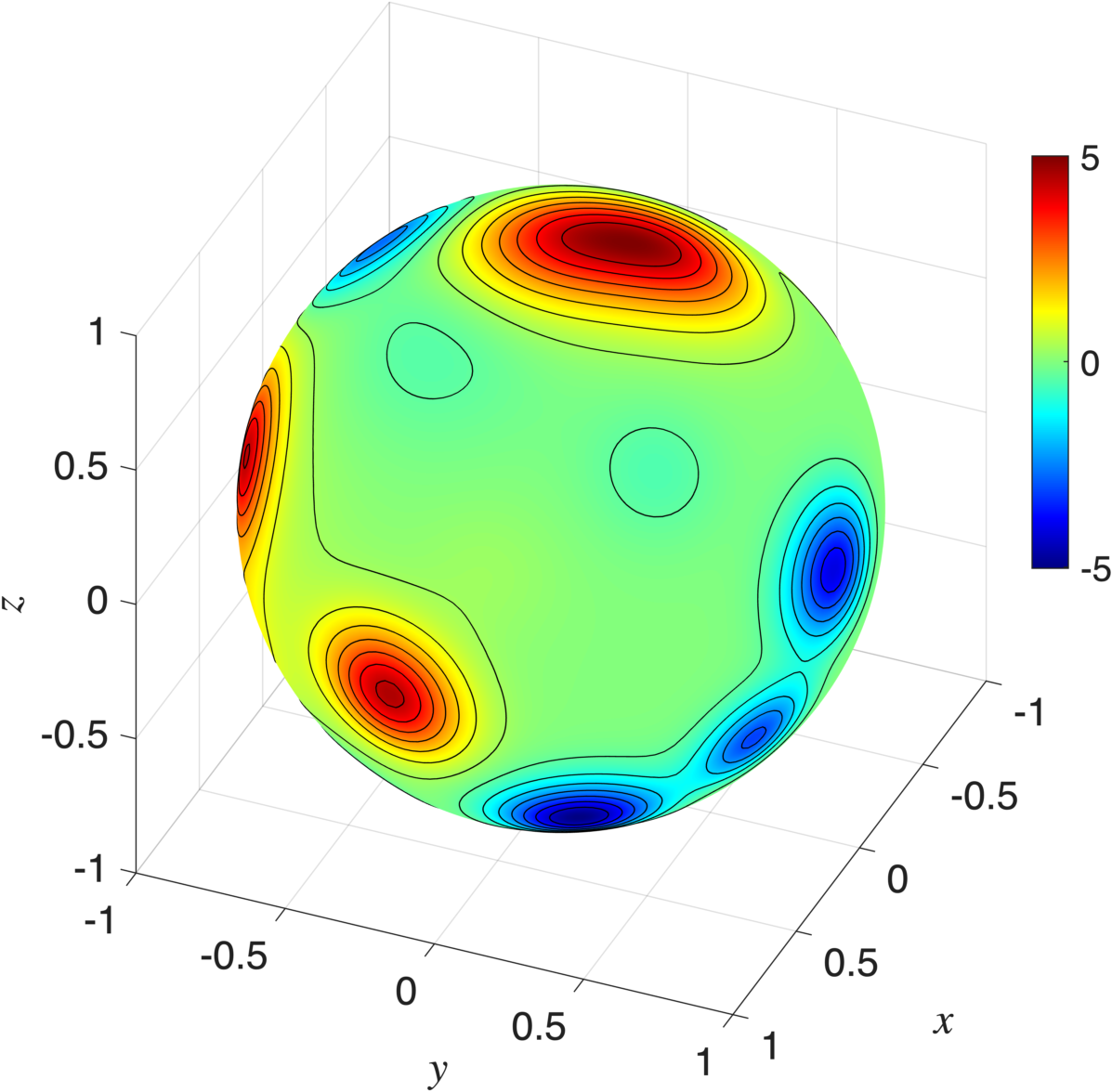} 
\caption{Target function \eqref{eq:target} used in the numerical experiments. \label{fig:target_function}}
\end{figure}
In this section, we carry out some numerical experiments illustrating the results Theorem \ref{main}.  We focus on $\M=\Sph^2$ and interpolants constructed from the restriction of the $\theta=2$ surface spline kernel, which for $x,y\in\Sph^2$ simplifies to $\Phi(x,y) = (1-x\cdot y) \log (1-x\cdot y)$. For a given target function $f:\Sph^2\rightarrow\R$ sampled at $\Xi\subset \Sph^2$, the interpolant takes the form
\begin{align}
    I_{\Xi}f = \sum_{\xi\in\Xi} a_{\xi} \Phi(\cdot,\xi) + \sum_{\ell\leq 1}\sum_{\mu=-\ell}^{\ell} c_{\ell,\mu} Y_{\ell}^{\mu}(\cdot),
    \label{eq:sbf_interp}
\end{align}
where $Y_{\ell}^{\mu}$ denotes the degree $\ell$ order $\mu$ spherical harmonic; since $\Phi$ is CPD of order 2, $\ell = 1$.  

For the interpolation points $\Xi$, we use maximum determinant points of different cardinalities $n=\#\Xi$.  These point sets are quasi-uniform, with a fill-distance that satisfies $h \sim n^{-1/2}$ and a bounded mesh ratio $\rho_{\Xi}$~\cite{Sloan_Womersley_2004}\footnote{These are available from the \texttt{spherepts} package \url{https://github.com/gradywright/spherepts}}.

We use a linear combination of randomly shifted restricted Gaussian kernels $G_{\ep}(x,y) = e^{2\ep^2(1-x\cdot y)}$ on the sphere to construct the target function:
\begin{align}
f = \sum_{y\in Y} \alpha_y G_{\ep_y}(\cdot,y),
\label{eq:target}
\end{align}
where $Y\subset\Sph^2$ are 20 scattered points, and the coefficients and shape parameters are chosen randomly as $\alpha_y\sim\mathcal{N}(0,3)$ and $\ep_y\sim\mathcal{N}(4,1)$, respectively; see Figure \ref{fig:target_function} for a visualization.  Since each shift $G_{\ep_y}(\cdot,y)$ can be expressed as $\int_{\Sph^2} \nu(\omega) \Phi(x,\omega) \diff \mu(\omega)+P$ with $P$ a spherical harmonic of degree 1, $f$ can also be expressed in a similar integral form.

\begin{figure}[htb]
\centering
\includegraphics[width=0.65\textwidth]  {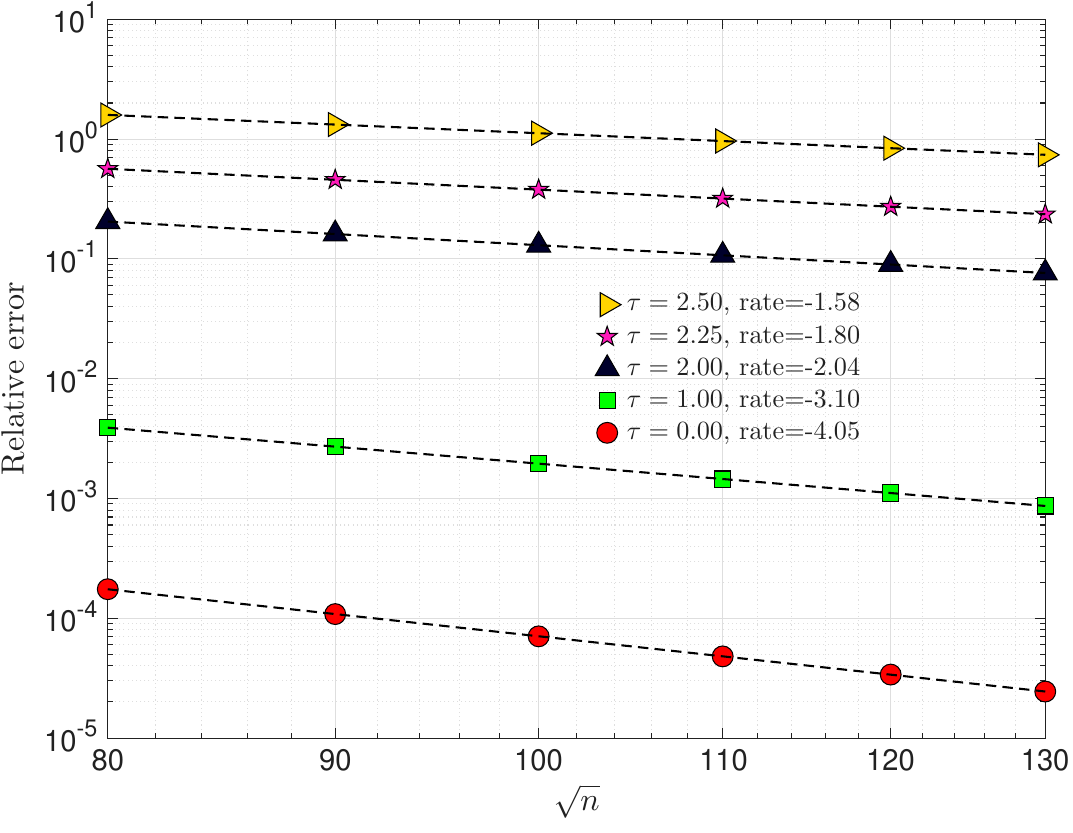}
\caption{Convergence results for the interpolant \eqref{eq:sbf_interp} of \eqref{eq:target} measured in different Sobolev norms $H^{\tau}(\Sph^2)$ \label{fig:sobolev_errors}}
\end{figure}

The Sobolev norm of order $\tau$ of a function $g:\Sph^2\rightarrow\mathbb{R}$ can be expressed as
\begin{align}
    \|g\|_{H^{\tau}(\Sph^2)}^2 = \sum_{\ell\geq 0} \sum_{\mu=-\ell}^{\ell} (1+\ell(\ell+1))^{\tau}|\widehat{g}(\ell,\mu)|^2,
    \label{eq:soblev_norm_sph}
\end{align}
where $\widehat{g}(\ell,\mu)$ are the spherical harmonic coefficients of $g$.
One approach to computing $\|I_{\Xi} f - f\|_{H^{\tau}(\Sph^2)}$ is to approximate the spherical harmonic coefficients of the error $f - I_{\Xi}$ up to some degree $L$ using a spherical harmonic transform (e.g.,\cite{schaeffer2013efficient}) and substitute them in \eqref{eq:soblev_norm_sph}. This approach requires sampling the error at a dense grid to accurately resolve the spherical harmonic coefficients up to degree $L$. 

Alternatively, we can exploit that the kernels $\Phi$ and $G_{\ep}$ are zonal and compute their spherical harmonic coefficients from their Gegenbauer expansions in the scalar variable $t = x\cdot y$~\cite{hubbert2015spherical}.  For both $\Phi$ and $G_{\ep}$, these expansions are 
known~\cite{Baxter}, 
allowing us to derive the following \textit{exact} expressions for the spherical harmonic coefficients of both \eqref{eq:sbf_interp} and \eqref{eq:target}:
\begin{align}
    \widehat{I_{\Xi}f}(\ell,\mu) &=  \begin{cases} c_{\ell,\mu} \phantom{\displaystyle \sum_{\xi\in\Xi}} & \ell \leq 1, \\ \hat{\Phi}(\ell) \displaystyle \sum_{\xi\in\Xi} a_{\xi} Y_{\ell,\mu}(\xi) & \ell > 1,
\end{cases} \label{eq:sbf_sph_coeffs}\\
\widehat{f}(\ell,\mu) &= \sum_{y\in Y}  \hat{G}_{\ep_y}(\ell) \alpha_y Y_{\ell,\mu}(y), \label{eq:target_sph_coeffs}
\end{align}
where
\begin{align*}
\hat{\Phi}(\ell) = \frac{4\pi}{(\ell+2)(\ell+1)\ell(\ell-1)}, \quad \text{and} \quad
\hat{G}_{\ep}(\ell) = \frac{2\pi^{3/2}}{\ep}\exp(-2\ep^2)I_{\ell+\frac12}(2\ep^2).
\end{align*}
Here, $I_{\ell+\frac12}$ denotes the modified Bessel function of the first kind of order $\ell+\frac12$.

Figure \ref{fig:sobolev_errors} shows the convergence results for the interpolation error in the (approximate) $H^{\tau}(\Sph^2)$ norms for different $\tau$, using \eqref{eq:sbf_sph_coeffs} and \eqref{eq:target_sph_coeffs} (see discussion below for details on this computation). The estimated rates of convergence for each $\tau$ are given in the legend. These were computed using lines of best fit to the corresponding errors. Convergence is measured against $\sqrt{n}$, since this is inversely proportional to the fill-distance $h$ of the point sets.  We see from the figure that the rates for $\tau = 0, 1, 2$ match the expected rates from 
Proposition 
\ref{P_doubling}\footnote{  
See also \cite[Theorem 3.8]{Hubbert:Morton:2004} for error estimates in $L_p$ in the doubling regime for classical interpolation on the sphere and \cite{mhaskar:etal:2009} for a different approximation scheme.},
and the rates for $\tau = 2.25$ and $2.5$ are close to those predicted by the new estimates from Theorem \ref{main}.  We note that similar convergence rates were observed as those in the figure over several different realizations of the parameters defining $f$.

\begin{figure}[htb]
\begin{tabular}{cc}
\includegraphics[width=0.45\textwidth]{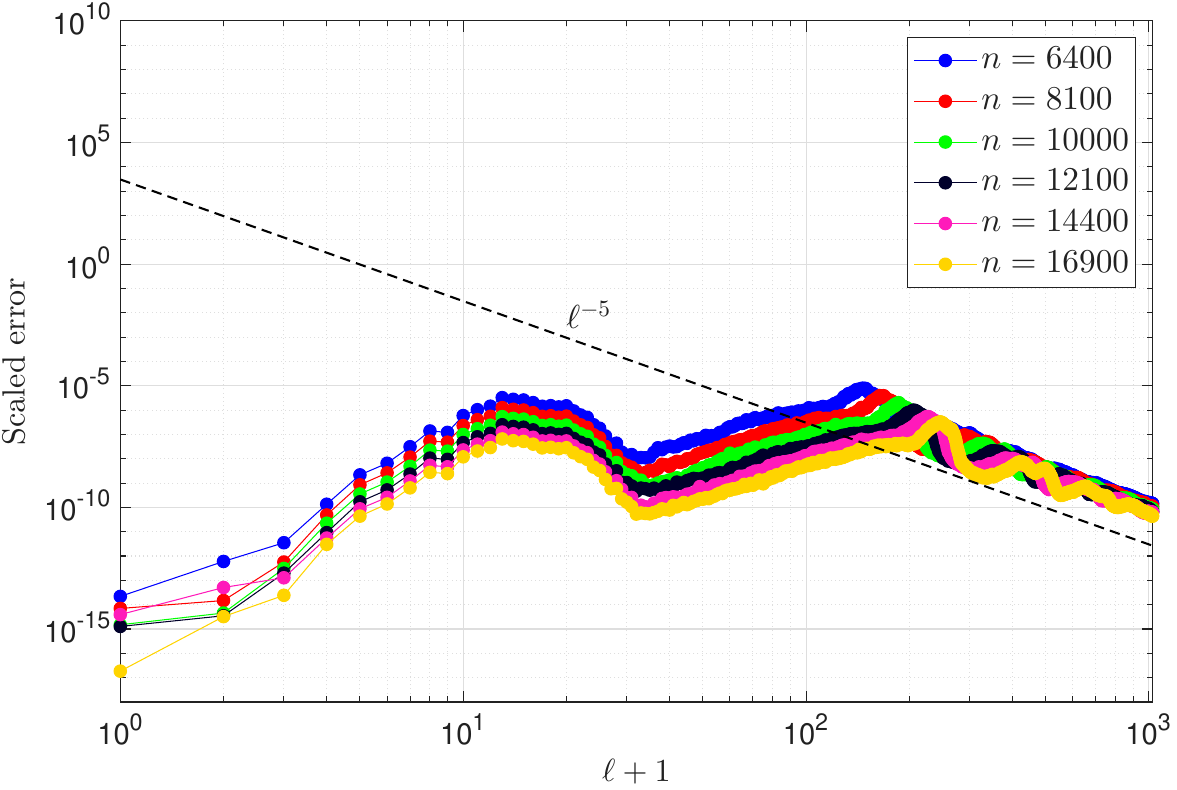} & \includegraphics[width=0.45\textwidth]{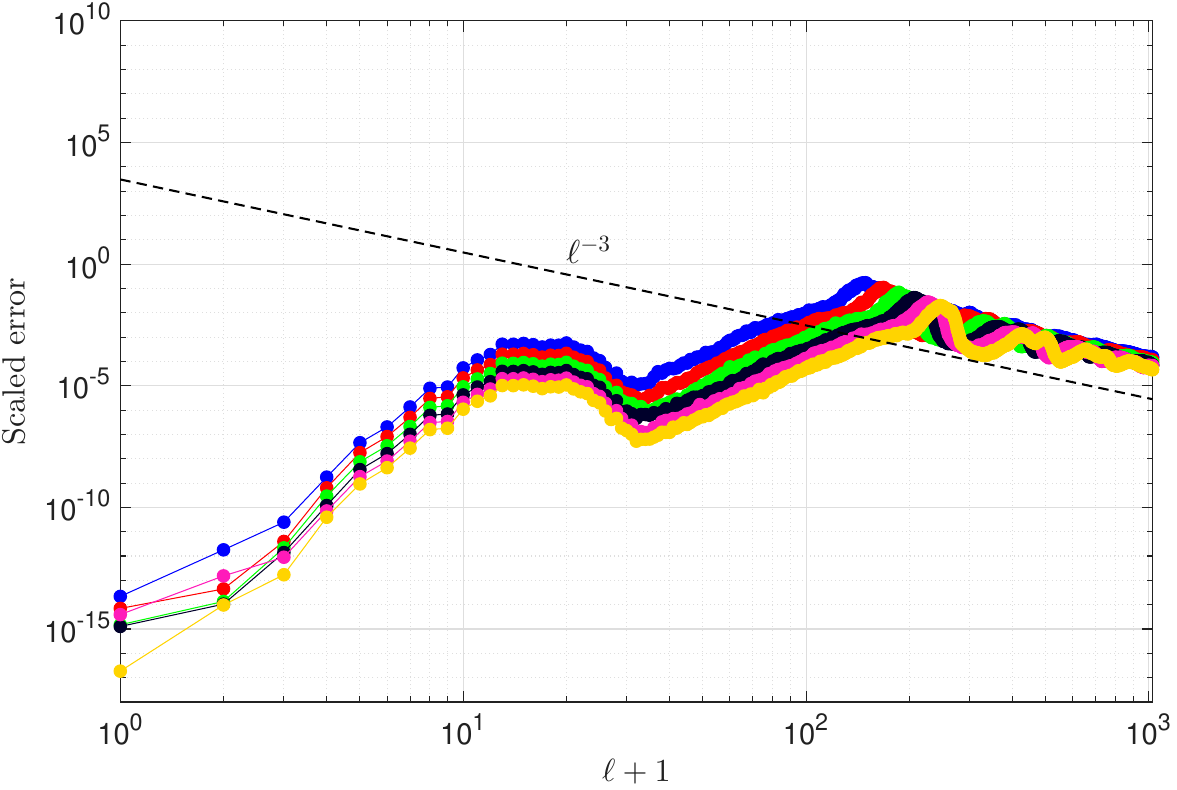} \\
(a) $\tau = 1$ & (b) $\tau = 2$ \\
\includegraphics[width=0.45\textwidth]{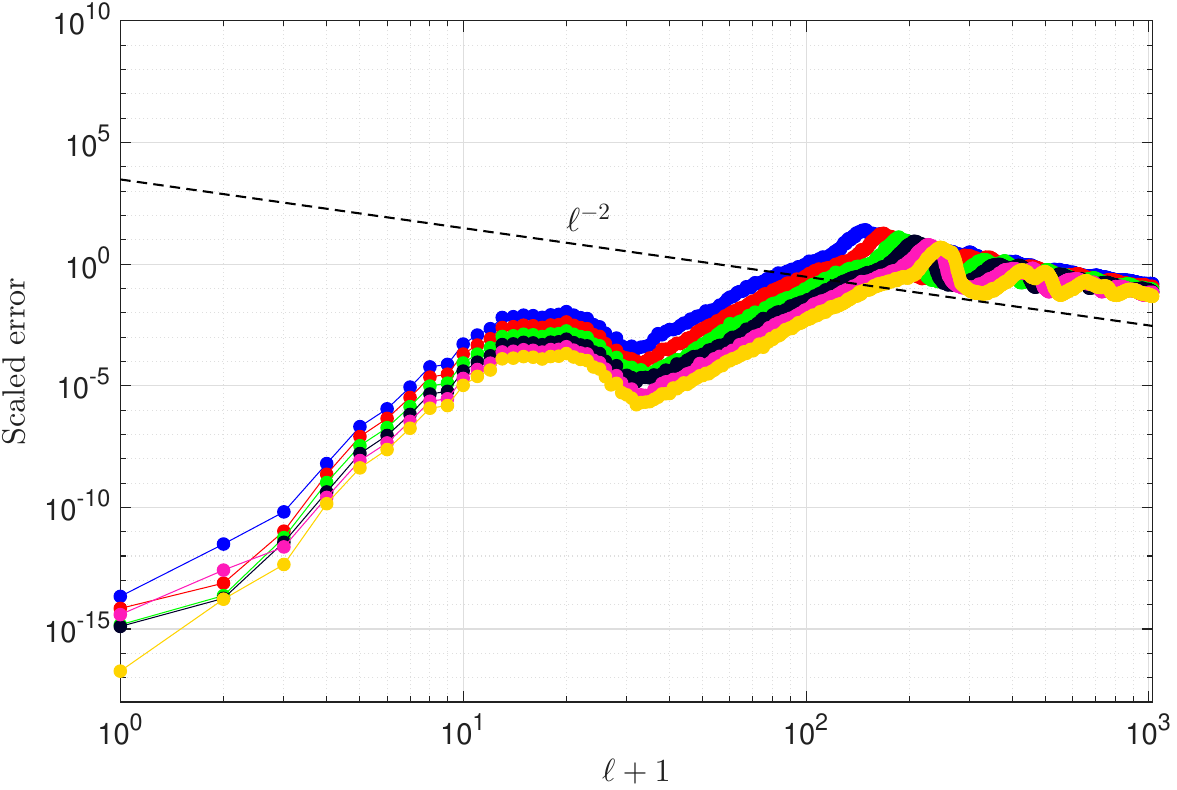} & \includegraphics[width=0.45\textwidth]{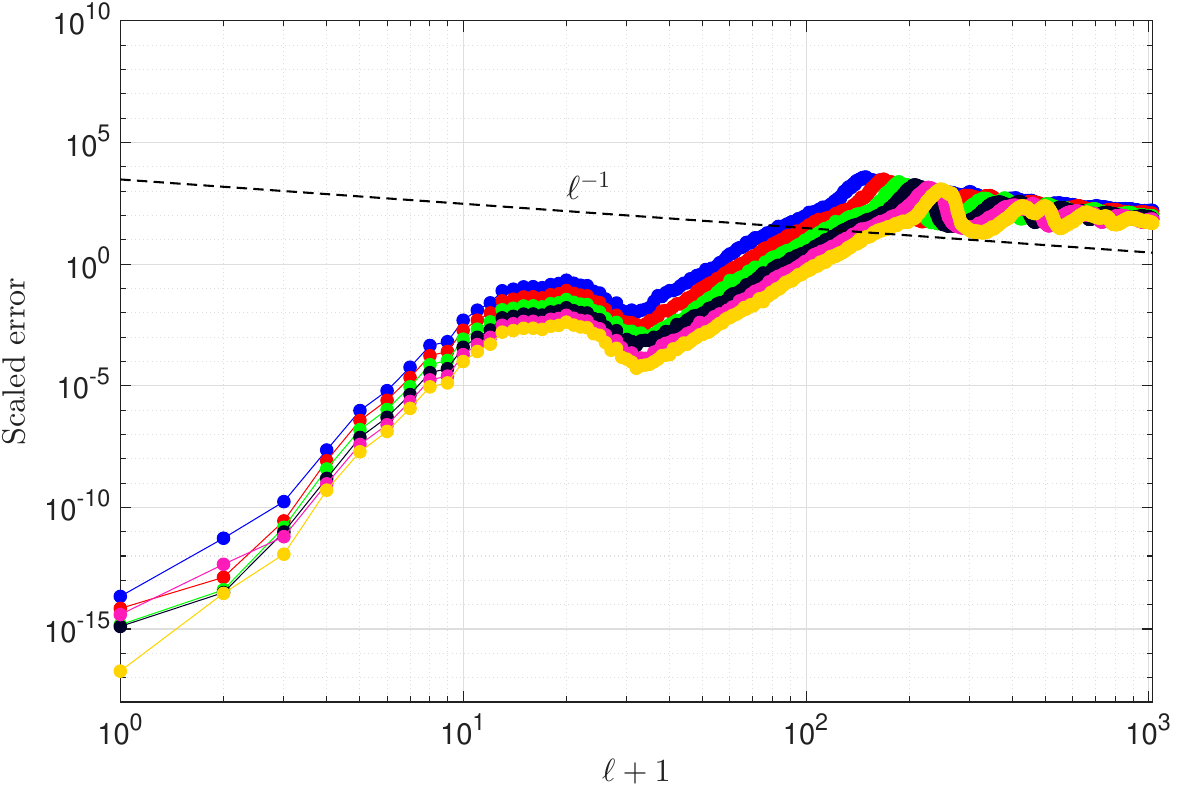} \\
(c) $\tau = 2.5$ & (d) $\tau = 3$ \\
\end{tabular}
\caption{Plots of $(1 + \ell(\ell+1))^{\tau}\sum_{\mu=-\ell}^{\ell}|\widehat{I_{\Xi}f}(\ell,\mu)-\widehat{f} (\ell,\mu)|^{2}$ for various $n=\#\Xi$.  (a)--(d) Show the results using different $\tau$. The dashed black lines corresponds to the line $\ell^{-2(4-\tau)+1}$ and give a measure of the slopes of the tails of the coefficients that are summed to approximate the $H^{\tau}(\Sph^2)$ norm \eqref{eq:soblev_norm_sph}\label{fig:norm_results2_mod}}
\end{figure}

In computing these results, we approximated the Sobolev norm \eqref{eq:soblev_norm_sph} by truncating the sum to degree $L=1024$.  This raises the question of whether sufficiently many terms were included to accurately capture the convergence rates. In Figure \ref{fig:norm_results2_mod}, we plot the scaled quantities $(1 + \ell(\ell+1))^{\tau}\sum_{\mu=-\ell}^{\ell}|\widehat{I_{\Xi}f}(\ell,\mu)-\widehat{f} (\ell,\mu)|^{2}$ for various $\tau$, along with estimates of the decay rates in the tails. The plots show that for $\tau=1,2,$ and 2.5, the tails decay at rates consistent with the convergence of the series as $L\rightarrow\infty$, while the decay rate for the tail with $\tau=3$ indicates that the series does not converge.  Although the $\tau=0$ results are omitted for brevity, we note that in this case the tail decays at the expected rate of $\ell^{-7}$.

 \section{Approximation of eigenvalues by kernel differentiation matrices}
\label{S_eigenvalues}
Considering error measured in (relatively) high order Sobolev spaces is motivated by using 
 kernel based collocation methods to solve PDEs on manifolds.
To this end, consider the differential operator $\opL:C^{\infty}(\M) \to C^{\infty}(\M)$,
which is bounded from $W_2^{t}(\M)$ to $W_2^{t-2}(\M)$.

Central to this approach is
to develop numerical differentiation by forming a 
matrix -- the differentiation matrix (DM) --
which
uses the 
kernel $(x,y)\mapsto \Phi(x,y)=\phi(x-y)$ 
and a point set $\Xi\subset \M$,
with $\#\Xi =n$,
 to represent $\opL$ as follows:
the $(\#\Xi)\times (\#\Xi)$ matrix  $\MM$ satisfies
$$(\forall u\in V_{\Xi}) \quad \MM u|_{\Xi} = (\opL u)|_{\Xi}.$$ 
The entries of $\MM$ can be succinctly described with the help of the Lagrange basis $(\chi_{\xi})_{\xi\in \Xi}$ 
of $V_{\Xi}$: 
since $u= \sum_{\xi\in\Xi} u(\xi) \chi_{\xi}$, the $(\zeta,\xi)$ entry 
of $\MM$ is
$\MM_{\zeta,\xi} = \opL\chi_{\xi}(\zeta)$.

We consider the problem of approximating spectral information (namely, eigenvalues and eigenvectors)
from the operator $\opL$ by corresponding, but more readily computed, spectral information from the differentiation matrix $\MM$.

The DM can then be used to treat time dependent parabolic problems like  $u_t +\opL u=f$ by a variety of methods (see the introduction of  \cite{HRW})
based on the semi-continuous formulation $\vec{u}_t + \MM \vec{u}=\vec{f}$  
(where $\vec{u}:[a,b]\times X\mapsto \R$). A natural assumption is that
$\opL$ has positive spectrum.
A fundamental
challenge for this problem is to ensure 
$\MM$, or its nearby perturbation $\MM^{\epsilon}$,  
have positive spectrum, too. (This is sometimes called {\em Hurwitz}stability).
A basic complication is that the matrix $\MM$ is generally not symmetric even 
when the operator $\opL$ is formally self-adjoint.

This setup has
been considered 
in \cite{HRW} in the case of spheres $\M= \Sph^{d}$, using zonal kernels, i.e.,
kernels having a 
Mercer-like
series expansion
$\Phi(x,y)=\sum_{\ell,m} c_{\ell,m} Y_{\ell}^m(x)Y_{\ell}^m(y)$   in terms of spherical harmonics,
and for operators
$\opL$   in the algebra generated by the Laplace-Beltrami operator $\Delta$.
The last condition ensures that the kernel $(x,y)\mapsto \opL_x\Phi(x,y)$ also has a Mercer-like expansion,
and that $\opL$ is symmetric on the native space of the kernel $\Phi$.
This particular sphere-based setup,
with strong and specifically sphere-based analytic properties on the kernel and operator,
guarantee positivity of the spectrum of $\MM$ along with a perturbation theory for eigenvalues; 
however, it does not explain why the spectrum of $\MM$ approximates that of $\opL$.

The issue of perturbation of eigenvalues holds in much broader generality than spheres -- for this, one can apply a version
of the Bauer-Fike theorem, along with known bounds on the condition number of the collocation matrix to get precise estimates
on the perturbation error (this is described below).
The issue of positivity of $\MM$ is less straightforward.
In this section, we give conditions  which guarantee approximation of eigenvalues and eigenvectors of $\opL$ by those of $\MM$;
these hold on spheres, but also more general manifolds, namely algebraic manifolds which are boundaries of compact regions. This provides a more general explanation of why $\MM$ has positive spectrum (without relying on the Mercer-like expansion of $\Phi$).

\subsection{Kernel differentiation matrix - positive definite case}
\label{SS_KDM_PD}
In keeping with previous sections, in the positive definite case we focus on the Mat{\'e}rn kernel:
 $(x,y) \mapsto \kernel(x,y):= \tphi(x-y)$.
Because
 $\kernel$ is positive definite,  
 the DM has factorization
$$\MM = \KK \AA,$$ 
where $\KK_{\xi,\zeta} = \opL_{\xi} \Phi(\xi,\zeta)$
is the Kansa (or asymmetric collocation) matrix
(the operator is applied to the first entry)
and $\AA=\PhiB^{-1}$ 
is  the inverse of the kernel collocation matrix  
$\PhiB_{\xi,\zeta} =\kernel(\xi,\zeta)$.

 In this case, a perturbation  $\MM \to \MM^{\epsilon}$ yields a perturbation of eigenvalues
 $|\lambda_k -\lambda_k^{\epsilon}|\le \kappa \|\MM -\MM^{\epsilon}\|$, where
 $\kappa$ is the condition number of the positive definite matrix $\PhiB^{1/2}$;
 this is a straightforward consequence of \cite[Proposition 3.1]{HRW}. 
 Furthermore, if $q$ is the separation radius, then $\kappa \le C q^{\frac{N-\theta}{2}}$ for the kernel $\tphi$ by \cite[Thm. 12.3]{Wendland_book}.
 
 In short, for computed eigenvalues $\lambda_k^{\epsilon}$, although we 
 know they are close to ideal eigenvalues $\lambda_k$ from the discretization $\MM$, we 
 do not know how these relate to the original eigenvalues $\lambda$ of $\opL$.

 \subsubsection*{Galerkin formulation} 
 To treat the spectrum of $\MM$, we recast this eigenvalue problem  as a Galerkin problem on $\Nn(\kernel)$, 
which can be treated by the method of Babu{\v s}ka and Osborn in \cite{BO}. 
 We replace $\opL u = \lambda u$ by an equivalent Galerkin problem:
 to find $u\in \Nn(\kernel),\ \lambda\in \C$ so that for all $v\in\Nn(\kernel)$
 \begin{equation}
 \label{Galerkin}
 \langle \opL u, v\rangle_{\Nn(\kernel)} = \lambda \langle u,v\rangle_{\Nn(\kernel)}
 \end{equation}
 holds.
 The right hand side uses the native space inner product, which is equivalent to 
 that of $W_2^{(\theta+d)/2}(\M)$,
  but the left involves  a bilinear form 
   $$ 
      B(u,v):=\langle \opL u, v\rangle_{\Nn(\kernel)} .
         $$
    \begin{assumption}
    \label{A_op}
$B$ is continuous, symmetric and coercive on the Hilbert space
 $H:=W_2^{(\theta+d)/2+1 }(\M)$.
 \end{assumption}
If $(\lambda,u)$ solve the variational problem (\ref{Galerkin}), then $u$ and $\opL u$ are continuous, and $\opL u = \lambda u$ holds pointwise;
 conversely, if $u$ is smooth and $\opL u = \lambda u$ holds pointwise, then (\ref{Galerkin}) holds. 
 We note that the variational problem (\ref{Galerkin}) may fail to capture eigenvalues/vectors if $\opL$ 
 does not enjoy a degree of
 elliptic regularity. Such examples are not hard to produce, see for instance the example  on $\Sph^1$
 given  in the introduction of \cite{BO2}.
 
Under this assumption, 
   the $k$th eigenvalue of $\opL$ is determined by a min-max principle as
    \begin{equation}
     \label{minimax}
    \lambda_k =\min_{\substack{W\subset {H} \ \\ \dim(W) =k} }\max_{w\in W} 
   \frac{B(w,w) 
   }{\langle w,w\rangle_{\Nn(\kernel)}}.
   \end{equation} 
 A discrete  version takes place over $V_{\Xi}$, which happens to be contained in the higher order space 
 ${H}$: 
 find $u^{(h)}\in V_{\Xi}$ and $\lambda^{(h)}\in \C$
  so that for all $ v\in V_{\Xi}$,
 $$
 \langle \opL u^{(h)}, v\rangle_{\Nn(\kernel)} = \lambda^{(h)} \langle u^{(h)},v\rangle_{\Nn(\kernel)}.
 $$
 By the minimax principle, the $k$th eigenvector $\lambda_k^{(h)}$ of the discrete problem 
 satisfies 
 \begin{equation}
 \label{discrete_minimax}
  \lambda_k^{(h)}=\min_{\substack{W\subset V_{\Xi} \ \\ \dim(W) =k} }\max_{w\in W} 
   \frac{ B(w,w) 
   }{\langle w,w\rangle_{\Nn(\kernel)}}
   \end{equation}
   and must therefore satisfy $\lambda_k^{(h)}\ge \lambda_k$. 
   For each $k\le \dim(V_{\Xi})$, there is a minimizing 
   eigenfunction $u_k^{(h)}\in V_{\Xi}$.
   \begin{lemma}
   \label{L_spectrum_equivalence}
   If $\opL$ satisfies Assumption \ref{A_op} then 
   the $k$th eigenvalue of $\MM$ is given by (\ref{discrete_minimax}).
   \end{lemma}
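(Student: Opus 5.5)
The plan is to show that the eigenvalue problem for $\MM$ is the same as the Galerkin problem (\ref{Galerkin}) restricted to $V_{\Xi}$. The tool is the native space reproducing property (\ref{rep_formula}) for the positive definite kernel $\kernel$, together with the Lagrange basis $(\chi_\xi)$ of $V_\Xi$. Then (\ref{discrete_minimax}) follows from the Courant–Fischer min–max principle for the symmetric form $B$ on the finite dimensional space $V_\Xi$.

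\textbf{Step 1: rewrite the bilinear form through point values.} For $u\in V_\Xi$ and $v=\sum_{\zeta} b_\zeta\kernel(\cdot,\zeta)\in V_\Xi$, the reproducing property gives
\[
\langle g,v\rangle_{\Nn(\kernel)}=\sum_\zeta b_\zeta\, g(\zeta)
\]
for every $g\in\Nn(\kernel)$. Assumption \ref{A_op} requires $B$ to be continuous on $H=W_2^{(\theta+d)/2+1}(\M)$. Since $\opL$ maps $H$ into $W_2^{(\theta+d)/2-1}$, I must check that $\opL u\in\Nn(\kernel)$, or at least that $B(u,v)$ equals the pairing of $\opL u$ against the representer. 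The Bernstein inequality, Theorem \ref{T_Bernstein}, puts $V_\Xi$ inside $W_2^{(\theta+d)/2+s}$ for $s<\theta/2$. Hence $\opL u\in W_2^{(\theta+d)/2+s-2}$, which lies in $\Nn(\kernel)$ when $\theta>4$. In general I would instead interpret $B(u,v)$ via continuous extension, approximating $\opL u$ by smooth functions in the relevant norm and using that point evaluation of $\opL u$ is continuous. With either justification,
\[
B(u,v)=\sum_\zeta b_\zeta\,(\opL u)(\zeta)=b^T\,\MM\, u|_\Xi .
\]
Likewise $\langle u,v\rangle_{\Nn(\kernel)}=b^T u|_\Xi$.

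\textbf{Step 2: translate the Galerkin problem into a matrix problem.} Write $v$ through its values, $v|_\Xi=\PhiB b$, so $b=\AA\, v|_\Xi$. The discrete Galerkin problem, required for all $v\in V_\Xi$, becomes
\[
b^T\MM\vec u=\lambda\, b^T\vec u \quad\text{for all } b\in\R^n,
\]
where $\vec u=u|_\Xi$. Since $u\mapsto u|_\Xi$ is a bijection from $V_\Xi$ onto $\R^n$ (as $\PhiB$ is invertible), this says exactly $\MM\vec u=\lambda\vec u$. So the eigenpairs of $\MM$ coincide with the discrete Galerkin eigenpairs.

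\textbf{Step 3: apply the min–max principle.} By Step 1, $B(u,v)=\vec v^T\AA\MM\vec u=\vec v^T\AA\KK\AA\vec u$, so symmetry of $B$ means $\AA\MM$ is symmetric. Then $\MM=\PhiB(\AA\MM)$ is similar to $\PhiB^{-1/2}(\AA\MM)\PhiB^{-1/2}$, which is symmetric. Its eigenvalues are real and are the stationary values of the Rayleigh quotient
\[
\frac{\vec u^T\AA\MM\vec u}{\vec u^T\AA\vec u}=\frac{B(u,u)}{\langle u,u\rangle_{\Nn(\kernel)}} .
\]
Courant–Fischer over subspaces of $\R^n\cong V_\Xi$ then yields (\ref{discrete_minimax}). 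Coercivity of $B$ ensures the eigenvalues are positive, although this is not needed for the characterization itself.

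\textbf{Main obstacle.} The delicate point is Step 1: justifying that $B(u,v)=\langle\opL u,v\rangle_{\Nn(\kernel)}$ is realized by point evaluations of $\opL u$ at $\Xi$ when $\opL u$ need not lie in $\Nn(\kernel)$. I would handle this with a density and continuity argument resting on the continuity part of Assumption \ref{A_op}.
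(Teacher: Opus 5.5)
Your proposal is correct and follows essentially the paper's route: the reproducing property identifies the Galerkin stiffness and mass matrices. The paper works in coefficient coordinates, obtaining $\KK\vec a=\lambda\PhiB\vec a$ and then noting that $\PhiB^{-1}\KK$ and $\KK\PhiB^{-1}=\MM$ share a spectrum, while you work in nodal coordinates $\vec u=\PhiB\vec a$ and land directly on $\MM\vec u=\lambda\vec u$; your Step~1 justification is also more careful than the paper, which simply uses $\langle\opL u,\kernel(\cdot,\xi)\rangle_{\Nn(\kernel)}=\opL u(\xi)$. One small slip in Step~3: $\MM=\PhiB(\AA\MM)$ is similar to $\PhiB^{1/2}(\AA\MM)\PhiB^{1/2}$ (conjugate by $\PhiB^{1/2}$), not to $\PhiB^{-1/2}(\AA\MM)\PhiB^{-1/2}$, though your Rayleigh quotient and the min--max conclusion are unaffected.
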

\begin{proof}
In this case, the stiffness matrix for the Galerkin problem on $V_{\Xi}$ using the standard basis 
is precisely $\KK$: 
since 
$u=\sum_{\xi\in X} a_{\zeta} \kernel(\cdot, \zeta)$ 
is determined from the vector 
$\vec{a} =(a_{\zeta})_{\zeta\in X}$ 
by  
$u|_{\Xi}= \PhiB \vec{a} $ 
we have
$$
B(u,\kernel(\cdot,\xi)) = 
\langle 
  \opL u, \kernel(\cdot,\xi)
\rangle_{\Nn(\kernel)}
=\opL u(\xi) = (\KK \vec{a})_{\xi}.
$$
Similarly, the mass matrix is $\PhiB$,  
and the Galerkin eigenvalue problem 
(\ref{Galerkin}) is   
$$\KK \vec{a} = \lambda \PhiB \vec{a}.$$
Since $\PhiB^{-1} \KK$ 
has the same spectrum as 
$\MM=\KK \PhiB^{-1}$,
the eigenvalues determined by 
(\ref{discrete_minimax}) 
are precisely those of the differentiation matrix 
$\MM$.
\end{proof}
  
\subsubsection*{Approximation of eigenvalues and eigenvectors
}

Letting $\EE{k}$ denote the eigenspace for the eigenvalue $\lambda_k$,
\cite{BO} shows that the quantity
$$\epsilon_{k} 
:= 
\sup_{
    \substack{u\in \EE{k}\\ \|u\|_{H}=1}
    }
\inf_{v\in V_{\Xi}} 
\|u-v\|_{H}
$$
can be used to control the 
 error between $\lambda_k$ and $\lambda_{k}^{(h)}$. 
  In particular,  \cite[Theorem 3.1]{BO} shows that
 $\lambda_{k}^{(h)}-\lambda_k\le C \epsilon_k^2$
 for some $\Xi$-independent constant $C$. We would like to point out that the result from \cite{BO} naturally forces the error of an approximation to be measured in a higher norm than the classical native space norm.
 Specifically, since 
 $H = 
 W_2^{(\theta+d)/2+1}(\M)
 $, 
 \cite[Theorem 3.1]{BO} implies
 that for each $k$, there
 exist constants $C,h_0>0$
 so that if $h<h_0$,
 \begin{equation}
\label{eq:epsk}
\lambda_{k}^{(h)}-\lambda_k\le 
C   
\inf \{
\|u-v\|_{W_2^{(\theta+d)/2+1}(\M)}^2\mid v\in V_{\Xi},
 \  u\in \EE{k}, \  \|u\|_{W_2^{(\theta+d)/2+1}}=1
 \}.
\end{equation}
Note that $C$ is independent of $h$, but may depend on $\lambda_k$.
 
 The quantity $\epsilon_{k}(h) $ also controls the error between eigenfunctions  in $\EE{k}$ and computed eigenfunctions 
 in $V_{\Xi}$ corresponding to $\lambda_k^{(h)}$ in
 that a basis $(u_j)$ can be chosen for $\EE{k}$ so that $\|u_k- u_{k}^{(h)}\|_{{H}}\le C \epsilon_k$.
In that case,
 $$
 \|u_k- u_{k}^{(h)}\|_{W_2^{(\theta+d)/2+1}(\M)}\le
 C   
\inf \{
    \|u-v\|_{W_2^{(\theta+d)/2+1}(\M)}\mid v\in V_{\Xi},
    \  u\in \EE{k}, \  \|u\|_{W_2^{(\theta+d)/2+1}}=1
 \}.
 $$
\begin{proposition}
\label{P_PD}
Suppose $\M\subset \R^N$ is a compact, closed manifold and that Assumption \ref{A_LPR} holds on $\M$. 
Suppose also that $\kernel:\M\times \M \to \R: (x,y) \mapsto  \tphi(x-y)$ is the restricted Mat{\'e}rn kernel. 
If  $\opL$ satisfies Assumption \ref{A_op}
and if $\EE{k}\subset \mathcal{A}_2$
 then the $k$th eigenvalue $\lambda_k^{(h)}$ of the differentiation matrix $\MM$ 
satisfies, for a constant $C$ 
$$0\le \lambda_{k}^{(h)}-\lambda_k\le C h^{\theta +d-2}.$$
\end{proposition}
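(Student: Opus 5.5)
The argument has two parts. Nonnegativity comes from the minimax principle. The upper bound comes from the Babu\v{s}ka--Osborn estimate (\ref{eq:epsk}), where we bound the approximation error of eigenfunctions in $H=W_2^{(\theta+d)/2+1}(\M)$ using Theorem \ref{T_manifold_approximation} with $p=q=2$.

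\emph{Step 1 (lower bound).} By Lemma \ref{L_spectrum_equivalence}, the $k$th eigenvalue of $\MM$ is $\lambda_k^{(h)}$ given by (\ref{discrete_minimax}). Since $V_\Xi\subset H$, the minimum in (\ref{discrete_minimax}) runs over a subset of the subspaces allowed in (\ref{minimax}). Hence $\lambda_k^{(h)}\ge\lambda_k$, which gives the left inequality.

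\emph{Step 2 (reduction).} By (\ref{eq:epsk}), for $h<h_0$ we have $\lambda_k^{(h)}-\lambda_k\le C\epsilon_k^2$. It therefore suffices to show $\epsilon_k\le Ch^{(\theta+d)/2-1}$, since squaring this gives $h^{\theta+d-2}$. The eigenspace $\EE{k}$ is finite dimensional, because $B$ is coercive on $H$ and compactly related to the native space inner product. So all norms on $\EE{k}$ are equivalent.

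\emph{Step 3 (approximating eigenfunctions).} Let $u\in\EE{k}$ with $\|u\|_H=1$. By hypothesis $u\in\mathcal{A}_2$, so $u=\int_\M\kernel(\cdot,z)\nu(z)\diff\sigma(z)$ with $\nu\in L_2(\M)$. Here $\ord=0$ in the Mat\'ern case, so there is no polynomial part and no side condition. We apply Theorem \ref{T_manifold_approximation} with $p=q=2$ and $s=(\theta+d)/2+1$. We need $0<s<\theta+d/2$, i.e. $\theta>2$. This should be implicit: for $\theta\le 2$ the claimed bound is not a decaying rate, and Assumption \ref{A_op} already requires $V_\Xi\subset H$, which forces $(\theta+d)/2+1<\theta+d/2$. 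The theorem gives $v\in V_\Xi$ with
$$\|u-v\|_{W_2^{(\theta+d)/2+1}(\M)}\le Ch^{\theta+d-(\theta+d)/2-1}\|\nu\|_{L_2(\M)}=Ch^{(\theta+d)/2-1}\|\nu\|_{L_2(\M)}.$$

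\emph{Step 4 (uniform control of $\nu$; the main obstacle).} We must bound $\|\nu\|_{L_2}\le C\|u\|_H$ uniformly over the unit sphere of $\EE{k}$. The map $u\mapsto\nu$ is linear on $\mathcal{A}_2$, because $\mathcal{V}$ is injective there. Injectivity follows as in the proof of Lemma \ref{lem:Apboundary}: if $\mathcal{V}\nu=0$, then the native space norm of $(\nu\otimes\delta_\M)*\tphi$ is $\int(\mathcal{V}\nu)\nu=0$, so $\nu=0$. Restricting to the finite-dimensional space $\EE{k}$, the map $u\mapsto\nu$ is a linear map between finite-dimensional normed spaces. It is therefore bounded, with constant $C_k$ depending only on $k$, which is allowed. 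If injectivity needed more care, I would fall back on choosing a basis $u_1,\dots,u_J$ of $\EE{k}$ with densities $\nu_j$ and expanding $u=\sum c_ju_j$. The coefficient vector is controlled by $\|u\|_H$ through norm equivalence, so $\|\nu\|\le\sum|c_j|\|\nu_j\|\le C\|u\|_H$ regardless. Taking the supremum over $u$ gives $\epsilon_k\le Ch^{(\theta+d)/2-1}$, and combining this with Step 2 completes the proof.
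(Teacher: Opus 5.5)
Your proposal is correct and follows the paper's proof: minimax gives $\lambda_k^{(h)}\ge\lambda_k$, and a $k$-dependent equivalence between $\|\nu\|_{L_2(\M)}$ and $\|u\|_{W_2^{(\theta+d)/2+1}(\M)}$ on the finite-dimensional $\EE{k}$ lets you apply Theorem \ref{T_manifold_approximation} with $p=q=2$ and $s=(\theta+d)/2+1$, after which (\ref{eq:epsk}) finishes. Your extra points (the tacit requirement $\theta>2$, and that $u\mapsto\nu$ is well defined by injectivity of $\mathcal{V}$ or can be bypassed via a basis of $\EE{k}$) fill in things the paper leaves implicit, though your side claim that $h^{\theta+d-2}$ does not decay when $\theta\le 2$ is wrong in general, since the exponent is positive whenever $\theta+d>2$.
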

\begin{proof}
Since $\EE{k}\subset \mathcal{A}_2$ is finite dimensional, there 
 exist constants $\gamma_A(k)$ and $\gamma_B(k)$ such that
 $$\gamma_A(k) \|u\|_{\mathcal{A}_2}\le \|u\|_{W_2^{(\theta+d)/2+1}(\M)} \le \gamma_B(k)\|u\|_{\mathcal{A}_2}, \quad \text{ for all } u\in \EE{k},$$
 where $\|u\|_{\mathcal{A}_2}= \|\nu\|_{L_2(\M)}$ for $u = \int_{\M}\Phi(\cdot, y) \nu(y) \diff \sigma(y)$.
 Hence, we can replace the normalization $\|u\|_{H}=1$ by $\|u\|_{\mathcal{A}_2}=1$ by allowing $k$ dependent constants.
 In other words, for any $u\in $ with $ \|u\|_{W_2^{(\theta+d)/2+1}(\M)}=1$, Theorem \ref{T_manifold_approximation} implies
 $$ \inf_{\nu\in V_{\Xi}} \|u-\nu\|_{W_2^{(\theta+d)/2+1}}\le C h^{(\theta+d)/2-1} \|u\|_{\mathcal{A}_2} \le C h^{(\theta+d)/2-1} /\gamma_A(k).$$
 By enlarging the constant, (\ref{eq:epsk}) gives
$$ \lambda_{k}^{(h)}-\lambda_k\le C_k  h^{\theta+d-2} $$
and the proposition follows with $C_k= C  /\gamma_A(k)$.
\end{proof}
\subsubsection*{Stronger results}
Of course if $\mathcal{A}_2=W_2^{\theta+d}(\M)$, then we may be able to say more,
specifically in case $\EE{k} \subset \mathcal{A}_2$ holds.
This can be guaranteed with a stronger conditions on $\opL$. 
\begin{assumption}
\label{A_elliptic}
The solutions to (\ref{Galerkin})  belong to $W_2^{\theta+d}(\M)$.
\end{assumption}
 This follows if
 $\opL$ is an elliptic 
pseudo-differential 
operator  with symbol in the H{\"o}rmander class $S_{1,0}^{2}$.
 Indeed,  following the discussion in 
  \cite[Chapter 7, Section 10]{Taylor2},
  if $\opL u = \lambda u$ for $u\in W_2^{s}(\M)$, then $u\in W_2^{s+2}(\M)$. 
 Thus Assumption \ref{A_elliptic} holds if $\opL$ is an elliptic differential operator with 
smooth coefficients. 
This case can be further extended
to treat operators in  local coordinates 
$$\opL u = \sum_{\alpha\le 2} a_{\alpha} D^{\alpha} u$$
with coefficients $a_{\alpha}\in C^{\theta+d-1}$
by applying \cite[Chapter 7, Theorem 2]{Evans}.

In other words,  $\EE{k}\subset W_2^{\theta+d}(\M)$.
This allows for stronger results in case of spheres and for boundaries of regions in $\R^N$.
\begin{cor}
\label{C_PD_sphere}
 Suppose
$\M=\Sph^d$
and that $\kernel:\M\times \M \to \R: (x,y) \mapsto  \tphi(x-y)$ is the restricted Mat{\'e}rn kernel.
If  $\opL$ satisfies Assumption \ref{A_elliptic}, then the $k$th eigenvalue $\lambda_k^{(h)}$ of the differentiation matrix $\MM$ 
satisfies
$$0\le \lambda_{k}^{(h)}-\lambda_k\le C_k h^{\theta +d-2}.$$
\end{cor}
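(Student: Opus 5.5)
The plan is to reduce Corollary \ref{C_PD_sphere} to Proposition \ref{P_PD}. That proposition already gives $0\le \lambda_k^{(h)}-\lambda_k\le C h^{\theta+d-2}$ once three things are known: Assumption \ref{A_LPR} holds on $\M$, Assumption \ref{A_op} holds, and $\EE{k}\subset\mathcal{A}_2$. So the work is to verify these hypotheses for $\M=\Sph^d$ with the restricted Mat\'ern kernel.

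\emph{Local polynomial reproduction.} The sphere is a compact closed algebraic manifold of dimension $d$. Assumption \ref{A_LPR} therefore holds on $\M$ by the result of \cite{HRW_LPR} quoted after that assumption, for any fixed $L$. I would choose $L$ large enough for Theorem \ref{T_manifold_approximation} to apply at smoothness $s=(\theta+d)/2+1$ with $p=q=2$.

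\emph{The inclusion $\EE{k}\subset\mathcal{A}_2$.} By Assumption \ref{A_elliptic}, every eigenfunction $u\in\EE{k}$ lies in $W_2^{\theta+d}(\Sph^d)$. Remark \ref{R_Spheres} then applies with $\ord=0$, since the Mat\'ern kernel is positive definite and no polynomial part is needed. That remark shows $\mathcal{V}:L_2(\Sph^d)\to W_2^{\theta+d}(\Sph^d)$ is invertible, with $\|\nu\|_{L_2}\sim\|u\|_{W_2^{\theta+d}}$. Hence $u=\mathcal{V}\nu$ with $\nu\in L_2$, so $u\in\mathcal{A}_2$. This uses that the Mercer coefficients of the restricted Mat\'ern kernel are positive and decay like $\ell^{-(\theta+d)}$, which is the input behind Remark \ref{R_Spheres} via \cite{Baxter,Odell}.

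\emph{The Galerkin hypothesis (Assumption \ref{A_op}).} This is the step I expect to be the main obstacle, because Assumption \ref{A_elliptic} alone does not obviously give continuity, symmetry and coercivity of $B$ on $H=W_2^{(\theta+d)/2+1}$. I see two options.
\begin{itemize}
\item Read the corollary as implicitly keeping the standing hypothesis of Proposition \ref{P_PD}; its wording (\lq\lq stronger results\rq\rq) suggests only the inclusion $\EE{k}\subset\mathcal{A}_2$ is new.
\item Note that on the sphere, for $\opL$ in the algebra generated by the Laplace--Beltrami operator, symmetry of $\opL$ on $\Nn(\kernel)$ follows from the zonal expansion as in \cite{HRW}. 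Coercivity then comes from positivity of the spectrum, and continuity from $\opL:W_2^{t}\to W_2^{t-2}$ together with the norm equivalence $\Nn(\kernel)\sim W_2^{(\theta+d)/2}$.
\end{itemize}
I would adopt the first reading and mention the second as justification in the spherical setting.

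With the three hypotheses in hand, Proposition \ref{P_PD} gives $\lambda_k^{(h)}-\lambda_k\le C_k h^{\theta+d-2}$. Lower nonnegativity comes from the min-max characterizations (\ref{minimax}) and (\ref{discrete_minimax}), since $V_\Xi\subset H$. Lemma \ref{L_spectrum_equivalence} identifies these discrete eigenvalues with those of $\MM$, which concludes the proof.
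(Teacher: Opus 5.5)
Your proposal is correct and is exactly the argument the paper intends; the paper gives no separate proof. Assumption \ref{A_elliptic} puts $\EE{k}$ in $W_2^{\theta+d}(\Sph^d)$, Remark \ref{R_Spheres} with $\ord=0$ identifies this space with $\mathcal{A}_2$, and Proposition \ref{P_PD} then gives the bound, with nonnegativity coming from the min-max principle and Lemma \ref{L_spectrum_equivalence}. You are also right that Assumption \ref{A_op} is meant to be inherited from Proposition \ref{P_PD} rather than derived from Assumption \ref{A_elliptic}, as is the tacit restriction $\theta>2$, which is needed both for $V_\Xi\subset H$ and for applying Theorem \ref{T_manifold_approximation} at smoothness $(\theta+d)/2+1$.
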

\begin{cor}
\label{C_PD_boundary}
Suppose $\Upsilon$ is a bounded open subset of $\R^N$ with smooth boundary so that
$\M=\partial \Upsilon$  is an $N-1$ dimensional algebraic manifold. 
Suppose also that $\kernel:\M\times \M \to \R: (x,y) \mapsto  \tphi(x-y)$ is the restricted Mat{\'e}rn kernel with
$\theta +N\in 2\N$.
If  $\opL$ satisfies Assumption \ref{A_elliptic}, then the $k$th eigenvalue $\lambda_k^{(h)}$ of the differentiation matrix $\MM$ 
satisfies
$$0\le \lambda_{k}^{(h)}-\lambda_k\le C_k h^{\theta +N-3}.$$
\end{cor}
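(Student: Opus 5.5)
The plan is to reduce Corollary \ref{C_PD_boundary} to Proposition \ref{P_PD}, whose bound $h^{\theta+d-2}$ becomes $h^{\theta+N-3}$ once we substitute $d=N-1$ (codimension $k=1$). Two hypotheses of Proposition \ref{P_PD} must be verified. First, $\M=\partial\Upsilon$ is a compact, closed manifold on which Assumption \ref{A_LPR} holds; since $\M$ is algebraic this is supplied by \cite{HRW_LPR}. Second, $\EE{k}\subset \mathcal{A}_2$; this is the substantive step. We also have to check Assumption \ref{A_op}, which Proposition \ref{P_PD} requires. I read Assumption \ref{A_elliptic} as stated in addition to Assumption \ref{A_op} (it is described as a strengthening), so we may use both.

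To show $\EE{k}\subset\mathcal{A}_2$, we invoke Lemma \ref{lem:Apboundary} with $p=2$. Because $\theta+N\in 2\N$, set $\DiffOrd=(\theta+N)/2$; since $\theta>0$, this gives $\DiffOrd>N/2$ and $\theta=2\DiffOrd-N$. In the Mat\'ern case we take $\ord=0$, so no polynomial augmentation is needed. Lemma \ref{lem:Apboundary} then gives that
$$\mathcal{V}:L_2(\M)\to W_2^{2\DiffOrd-1}(\M)=W_2^{\theta+N-1}(\M)=W_2^{\theta+d}(\M)$$
is boundedly invertible, so $\mathcal{A}_2=W_2^{\theta+d}(\M)$ with $\|u\|_{\mathcal{A}_2}=\|\mathcal{V}^{-1}u\|_{L_2(\M)}\sim\|u\|_{W_2^{\theta+d}(\M)}$. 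Assumption \ref{A_elliptic} places every solution of (\ref{Galerkin}), and hence each eigenspace $\EE{k}$, in $W_2^{\theta+d}(\M)$. Therefore $\EE{k}\subset\mathcal{A}_2$.

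With both hypotheses in hand, Proposition \ref{P_PD} yields $0\le\lambda_k^{(h)}-\lambda_k\le C_k h^{\theta+d-2}$. Substituting $d=N-1$ gives the exponent $\theta+N-3$. The lower bound $0\le\lambda_k^{(h)}-\lambda_k$ comes from the min--max comparison of (\ref{minimax}) and (\ref{discrete_minimax}), since $V_\Xi\subset H$; Lemma \ref{L_spectrum_equivalence} identifies $\lambda_k^{(h)}$ with the eigenvalues of $\MM$.

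I expect the main obstacle to be the compatibility of parameters rather than any new estimate. Proposition \ref{P_PD} relies on Theorem \ref{T_manifold_approximation} in $W_2^{(\theta+d)/2+1}(\M)$, which needs $(\theta+d)/2+1\le\theta+d/2$, that is $\theta\ge2$. Lemma \ref{lem:Apboundary} is used only at $s=0$, $p=2$, where it applies directly. If $\theta<2$, the rate $h^{\theta+d-2}$ would anyway be weaker than trivial, so I would note implicitly that $\theta\ge 2$ is assumed, exactly as in Proposition \ref{P_PD}. The finite dimensionality of $\EE{k}$, which is needed to trade the $H$-normalization for the $\mathcal{A}_2$-normalization, follows from coercivity of $B$ together with compactness of the embedding.
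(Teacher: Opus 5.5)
Your proposal is correct and is exactly the argument the paper leaves implicit for this corollary. Lemma \ref{lem:Apboundary} with $p=2$, $s=0$, $\DiffOrd=(\theta+N)/2$ and $\ord=0$ gives $\mathcal{A}_2=W_2^{\theta+d}(\M)$. Assumption \ref{A_elliptic} places $\EE{k}$ in that space, and Proposition \ref{P_PD} with $d=N-1$ then yields $h^{\theta+N-3}$.

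One small correction to your side remark on parameters. Since $W_2^t(\M)=B_{2,2}^t(\M)$, Theorem \ref{T_manifold_approximation} is needed at $s=(\theta+d)/2+1<\theta+d/2$ strictly, because the endpoint is only allowed for $q=\infty$. The implicit requirement is therefore $\theta>2$, which is also what puts $V_{\Xi}$ inside $H$. For $\theta\le 2$ it is this framework that fails, not that the rate becomes trivial; for example, $N=3$ and $\theta=1$ would give the nontrivial exponent $\theta+N-3=1$.
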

We point out that the growth of $k \mapsto C_k$ limits the range of eigenvalues which can be approximated with high rates. To illustrate this, we consider $\opL=-\Delta_{\M}$ the Laplace-Beltrami operator.
We consider $(\theta+d)/2+1\in \N$ and use the (equivalent) description of a Sobolev-norm $\|u\|_{W^{(\theta+d)/2+1}_{2}(\M)} \cong \|(\operatorname{id}-\Delta_{\M})^{(\theta+d)/4+1/2}u\|_{L_{2}(\M)}$. Let $\lambda_k\ge 0$ denote the eigenvalues of $-\Delta_{\M}$. In this case, we obtain for $s\ge 0$
\begin{equation*}
    \|u\|^2_{W^{s}_{2}}\cong (u, (\operatorname{id}-\Delta_{M})^s u)_{L_{2}}=\left(1+\lambda_k\right)^s\|u\|^2_{L_{2}(\M)}, \quad u \in \EE{k}.
\end{equation*}
Using for $\lambda_k\ge 1$, we obtain that $ \lambda^{s}_k \le (1+\lambda_k)^s \le 2^s \lambda^{s}_k$.
Hence, if $\mathcal{A}_2= W_2^{\theta+d}(\M)$, then we obtain from \eqref{eq:epsk}
\begin{align*}
    \lambda_{k}^{(h)}-\lambda_k&\le C_k \inf \{
    \|u-v\|^2_{W_2^{(\theta+d)/2+1}(\M)}\mid v\in V_{\Xi},
    \  u\in \EE{k}, \  \|u\|_{W_2^{(\theta+d)/2+1}}=1
 \}\\
 &\le C_k \inf \{
     h^{\theta+d-2} \| u\|^2_{W^{\theta+d}_{2}(\M)}\mid v\in V_{\Xi},
    \  u\in \EE{k}, \  \|u\|_{W_2^{(\theta+d)/2+1}}=1
 \}\\
 &\le  C_k \inf \{
     h^{\theta+d-2} 2^{\theta+d} \lambda^{(\theta+d)/2-1}_{k}\|u\|^2_{W_2^{(\theta+d)/2+1}} \mid v\in V_{\Xi},
    \  u\in \EE{k}, \  \|u\|_{W_2^{(\theta+d)/2+1}}=1
 \}\\&\le C_k h^{\theta+d-2} 2^{\theta+d} \lambda^{(\theta+d)/2-1}_{k}=C_k  2^{\theta+d} (h^2 \lambda_k)^{(\theta+d)/2-1}.
\end{align*}
This suggest a coupling of $h$ to $k$ at least for those values of $k\le K$, where  $C=\max_{1\le k \le K}C_k$ is bounded independently of $N$ or $h$.
  
 \subsection{Conditionally positive definite case} 
 In case  $\Phi$ is conditionally positive definite, with auxiliary space $\varPi$,
having dimension $J:= \dim( \varPi)$ 
and basis 
$\{\varphi_j\mid j\le J\}$ for $\varPi$,
 the differentiation matrix has the form
$\MM = \KK \AA  
  + 
  \mathbf{ Q}
  \BB$
where 
   $\mathbf{Q} = \bigl(\opL \varphi_{j}(\xi)\bigr)_{j\le J,\xi\in {\Xi}}$
   is a Vandermonde matrix and
   the matrices 
 $\AA$ and $\BB$
 consist of coefficients of the Lagrange functions: $\chi_{\xi} = \sum_{\zeta\in {\Xi}} \AA_{\zeta,\xi} \Phi(\cdot,\zeta)+
 \sum_{j\le J} \BB_{j,\xi} \varphi_j$.
  These occur as solutions 
 to the augmented kernel collocation problem described in \cite{HRW}.
This  extra complication may be simplified somewhat  if the auxiliary space  
 is invariant under $\opL$.
 If so, then $\mathbf{Q} =\mathbf{P} \mathbf{\Lambda}$ for some block-diagonal
 matrix $\mathbf{\Lambda}$ and Vandermonde matrix
 $\mathbf{P} = \bigl(\varphi_{j}(\xi)\bigr)_{j\le J,\xi\in {\Xi}}$.
 Thus,
 \begin{align}
 \MM = \KK \AA  
  + 
\mathbf{P}\LLambda \BB
\label{eq:CPDDM}
\end{align}
 This scenario is common in practice, even when $\Phi$ is strictly positive definite,
  because $\MM$ is exact on elements of 
  $\varPi$.

\begin{example} If $\M=\mathbb{S}^d$ and  $\opL$ is a suitable function of the Laplace-Beltrami operator
and the kernel $\Phi$ 
is a standard spherical basis function (SBF),
then 
$\varPi=\mathcal{P}_{M-1} (\mathbb{S}^d) $ consists of spherical harmonics, and is thus invariant under $\opL$.
\end{example}

\begin{example} If $\M$ is a generic compact manifold and $\mathrm{Null}(\opL) = \mathcal{P}_0(\M)$, we may
wish to consider $\Phi$ to be CPD of order $0$ (even if it is strictly positive definite). 
Note that in this case, 
$\MM = \KK \AA  $, since $ \LLambda =0$
and  so $\mathbf{ Q}   \BB = \mathbf{P} \LLambda   \BB = 0$.
This has the advantage of consistency:   $\MM$ has $0$ as its smallest eigenvalue (matching that of $\opL$).
\end{example}
In the case of conditional positive definiteness, we thus make an assumption which has no analog in the positive definite case.
\begin{assumption}\label{ass:diag}
We assume that   
$$\opL|_{\varPi}:\varPi\to \varPi$$
is diagonalizable, and 
$\{\varphi_j\mid j\le J\}$ is an eigenbasis for $\opL|_{\varPi}$.
\end{assumption}
As a consequence, we obtain that matrix $\LLambda $ is diagonal in the differentiation matrix $\MM = \KK \AA  + \mathbf{P}\LLambda \BB$ .

\subsubsection*{Galerkin formulation} 
 Consider the Hilbert spaces 
 $$ \Nn_{\ord}(\Phi)/\mathcal{P}_{\ord-1} (\M)
 \qquad
 \text{ and }
 \qquad 
 H := W_2^{(\theta+d)/2+1}(\M)/\mathcal{P}_{\ord-1} (\M).$$
 Define
 $$B(u,v) = \langle \opL u,v\rangle_{\Nn_{\ord}(\Phi)}.$$ 
 Next, we need the analogue of Assumption \ref{A_op} for the conditionally positive definite case.
 \begin{assumption}
 \label{A_op_cond}
 $B:H\times H \to \R $ is continuous, symmetric and coercive on $H$.
 \end{assumption}
 Recall that $\mathcal{P}_{\ord-1}(\M)$ is the nullspace of the semi-inner product on $\Nn_{\ord}(\Phi)$, 
 so it is also the nullspace of $B$.
  Find $u\in \Nn_{\ord}(\Phi),\ \lambda\in \C$ so that for all $v\in\Nn_{\ord}(\Phi)$
 \begin{equation}\label{Galerkin_CPD}
 \langle \opL u, v\rangle_{\Nn_{\ord}(\Phi)} = \lambda \langle u,v\rangle_{\Nn_{\ord}(\Phi)}
 \end{equation}
 holds. The discrete version over $V_{\Xi}$ reads now: 
 find $u^{(h)}\in V_{\Xi}$ and $\lambda^{(h)}\in \C$
  so that for all $ v\in V_{\Xi}$
 \begin{equation}\label{discrete_Galerkin}
 \langle \opL u^{(h)}, v\rangle_{\Nn_{\ord}(\Phi)} = \lambda^{(h)} \langle u^{(h)},v\rangle_{\Nn_{\ord}(\Phi)}
 \end{equation}
 holds.

By Assumption \ref{A_op_cond}, we have as in the positive definite case that
   the $k$th eigenvalues for both the continuous and discrete problem
   are determined by a min-max principle as
    \begin{equation}
    \label{discrete_minimax_CPD}
    \lambda_k =\min_{\substack{W\subset {H} \ \\ \dim(W) =k} }\max_{w\in W} 
   \frac{B(w,w) 
   }{\langle w,w\rangle_{\Nn_{\ord}(\Phi)}} \quad \text{and} \quad 
   \lambda_k^{(h)}=\min_{\substack{W\subset V_{\Xi} \ \\ \dim(W) =k} }\max_{w\in W} 
   \frac{ B(w,w) 
      }{\langle w,w\rangle_{\Nn_{\ord}(\Phi)}}.
\end{equation}
Therefore we automatically have $\lambda_k^{(h)}\ge \lambda_k$. 
We use the Lagrange basis $(\chi_{\xi})$ for $V_{\Xi}$ to describe the stiffness and mass matrices.
The stiffness matrix 
has $\xi,\eta$ entry
$B(\chi_{\xi} ,\chi_{\zeta} ) 
= \sum \sum \AA_{\nu,\zeta} 
\opL \Phi(\nu,\eta) \AA_{\eta,\xi}
$; 
i.e.,
it is $\AA^T \KK\AA = \AA\KK\AA$, since $\AA$ is symmetric. 
Similarly, the mass matrix has $\xi,\eta$ entry
$\langle \chi_{\xi} ,\chi_{\zeta} 
\rangle_{\Nn_{\ord}(\Phi)} = \sum \sum \AA_{\nu,\zeta}  \Phi(\nu,\eta) \AA_{\eta,\xi}$;
i.e., the mass matrix is 
$\AA^T \PhiB \AA 
=
\AA \PhiB \AA = \AA$,
since  $\mathbf{I}_{\Xi} = \bigl(\chi_{\xi}(\zeta)\bigr)_{\xi,\zeta} = \PhiB \AA + \mathbf{P}\BB$, 
and 
$\AA\mathbf{P} =\mathbf{0}_{J,M}$.

In short, the solutions $(\lambda^{(h)}, u^{(h)})\in \R\times V_{\Xi}$ 
to (\ref{discrete_Galerkin}) are in one to one correspondence  to the eigenvalues which satisfy
\begin{equation}
\label{eq:discrete_GalerkinMat}
\AA \KK\AA \vec{u} = \lambda^{(h)} \AA \vec{u}
\end{equation}
via $u^{(h)} = \sum \vec{u}_{\xi} \chi_{\xi}$.

\subsection*{Comparison to eigenvalues of the differentiation matrix}
Finally, we aim at a variant of 
Lemma 
\ref{L_spectrum_equivalence} in 
the conditionally positive 
definite case which is much 
harder due the the more 
complicated structure of the 
differentiation matrix $\MM$.
The numerical linear algebra is 
taken from \cite{HRW} and we 
recall it for the reader's 
convenience.

\begin{lemma}
 Given Assumptions \ref{ass:diag} and \ref{A_op_cond}, the differentiation matrix $$\MM
 \sim\begin{pmatrix}
\mathbf{\Lambda}& \mathbf{R}\\\mathbf{0}&\mathbf{\Theta}\end{pmatrix}$$
is similar to an upper block triangular matrix where the diagonal blocks are themselves  diagonal matrices 
$\mathbf{\Theta}$ and $\mathbf{\Lambda}$.

If $\vartheta \in \sigma(\mathbf{\Theta}) \setminus \sigma(\mathbf{\Lambda})$, then $\vartheta$ is an eigenvalue of $\MM$
and appears as a solution of \eqref{eq:discrete_GalerkinMat}. On the other hand,
if $\vartheta\in \sigma(\mathbf{\Lambda})\cap \sigma(\mathbf{\Theta})$, then it is a possibly 
generalized eigenvalue of $\MM$, with $\MM u = \vartheta u +v$ and $v\in N_{\AA}$.
Also in that case, $(u,\vartheta)$ is an eigenpair of \eqref{eq:discrete_GalerkinMat}. 
\end{lemma}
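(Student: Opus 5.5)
We will work entirely with the matrix identities recorded above for the Lagrange coefficients: $\PhiB\AA+\mathbf{P}\BB=\mathbf{I}$, $\AA\mathbf{P}=\mathbf{0}$ (equivalently $\mathbf{P}^T\AA=\mathbf{0}$ by symmetry of $\AA$), and $\BB\mathbf{P}=\mathbf{I}_J$. The last identity holds because the interpolant of $\varphi_j$ is $\varphi_j$ itself. The plan is to exhibit an explicit invariant subspace. Since $\MM$ is exact on $\varPi$, we have $\MM\mathbf{P}=\mathbf{P}\LLambda$. Indeed, $\AA\mathbf{P}=0$ gives $\MM\mathbf{P}=\mathbf{P}\LLambda\BB\mathbf{P}=\mathbf{P}\LLambda$. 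So the column space of $\mathbf{P}$, which is $J$-dimensional, is $\MM$-invariant, and $\MM$ acts on it by the diagonal matrix $\LLambda$ (Assumption \ref{ass:diag}).

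Next we complete a basis. We will choose $\mathbf{S}=(\mathbf{P}\ \mathbf{W})$, where the columns of $\mathbf{W}$ span a complement of $\mathrm{Range}(\mathbf{P})$. The natural choice is $\mathrm{Range}(\mathbf{W})=\mathrm{Range}(\AA)$, and we will check that it is a complement. From the identities, $\mathbf{I}-\mathbf{P}\BB=\PhiB\AA$ is a projector with range $\mathrm{Range}(\PhiB\AA)$ and kernel $\mathrm{Range}(\mathbf{P})$. This suggests working in coordinates adapted to the splitting $\R^{\Xi}=\mathrm{Range}(\mathbf{P})\oplus\mathrm{Range}(\PhiB\AA)$. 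In these coordinates $\mathbf{S}^{-1}\MM\mathbf{S}$ is block upper triangular with upper-left block $\LLambda$. The lower-right block is the compression of $\MM$ to the quotient $\R^{\Xi}/\mathrm{Range}(\mathbf{P})$, which is represented by $\AA\KK\AA$ restricted to $\mathrm{Range}(\AA)$ relative to the mass form $\AA$.

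To diagonalize that block, we use the Galerkin problem \eqref{eq:discrete_GalerkinMat}. On $\mathrm{Range}(\AA)\cong V_\Xi/\varPi$, the matrix $\AA$ is positive definite: it is the mass matrix of the native space inner product on the quotient. By Assumption \ref{A_op_cond}, $\AA\KK\AA$ is symmetric. The generalized symmetric-definite eigenproblem therefore has real eigenvalues and an $\AA$-orthonormal eigenbasis. Choosing $\mathbf{W}$ to consist of (the $\PhiB\AA$-images of) these eigenvectors makes the lower block a diagonal $\mathbf{\Theta}$. We must verify that the quotient action of $\MM$ agrees with $\AA^{-1}(\AA\KK\AA)$ on $\mathrm{Range}(\AA)$. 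This amounts to computing $\AA\MM=\AA\KK\AA+\AA\mathbf{P}\LLambda\BB=\AA\KK\AA$: left-multiplication by $\AA$ kills $\mathrm{Range}(\mathbf{P})$ and intertwines $\MM$ with the Galerkin pencil. We expect this intertwining, together with careful bookkeeping of the similarity, to be the main technical obstacle.

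The eigenvalue statements then follow from the triangular structure. Take $\vartheta\in\sigma(\mathbf{\Theta})\setminus\sigma(\LLambda)$. Then $\LLambda-\vartheta$ is invertible, so the block system can be solved for a genuine eigenvector of $\MM$ of the form $(\mathbf{x},\mathbf{e})$. Applying $\AA$ shows that it corresponds to a solution of \eqref{eq:discrete_GalerkinMat}. If instead $\vartheta\in\sigma(\LLambda)\cap\sigma(\mathbf{\Theta})$, the solvability of $(\LLambda-\vartheta)\mathbf{x}=-\mathbf{R}\mathbf{e}$ may fail. In that case one only gets $\MM u=\vartheta u+v$ with $v\in\mathrm{Range}(\mathbf{P})=N_{\AA}$, a possibly generalized eigenvector. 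Multiplying by $\AA$ still yields $\AA\KK\AA\vec u=\vartheta\AA\vec u$, so $(u,\vartheta)$ is again a Galerkin eigenpair.
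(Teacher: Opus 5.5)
Your proposal is correct and agrees with the paper's proof: the eigenvalue claims rest on the same identity $\AA\MM=\AA\KK\AA$ (from $\AA\mathbf{P}=\mathbf{0}$) combined with $N_{\AA}=\mathrm{Range}(\mathbf{P})$, applied to $\MM\vec{u}=\vartheta\vec{u}+\vec{v}$. The only difference is that the paper simply cites \cite[Lemma 4.1]{HRW} for the block triangularization, whereas you derive it directly from the invariance $\MM\mathbf{P}=\mathbf{P}\LLambda$ and the symmetric-definite pencil $(\AA\KK\AA,\AA)$ on $\mathrm{Range}(\AA)$; this has the merit of showing explicitly that the symmetry in Assumption \ref{A_op_cond} is what makes $\mathbf{\Theta}$ real and diagonal.
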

\begin{proof}
The block triangularization  is presented in  \cite[Lemma 4.1]{HRW}.
If $\vartheta\in\sigma(\mathbf{\Theta})$ and $\vec{u}$  satisfies $\MM \vec{u} = \vartheta \vec{u} +\vec{v}$
with $\vec{v} \in N_{\AA} $,
then 
$\MM=\KK\AA +\mathbf{P} \mathbf{\Lambda} \BB$ and
 $N_\AA=R_\mathbf{P}$ imply
$$\AA \KK\AA \vec{u}  = \AA(\MM - \mathbf{P} \mathbf{\Lambda} \BB)\vec{u} = \AA \MM\vec{u} = \vartheta \AA \vec{u},$$
so $(\vartheta,\vec{u})$ is a solution to  (\ref{eq:discrete_GalerkinMat}).
\end{proof}
It follows that the $n-J = \dim (V_{\Xi}/\varPi)$ solutions to \eqref{eq:discrete_GalerkinMat}
correspond to the generalized eigenpairs of $\MM$ which are not in $\varPi(\Xi)$.

We are now in a position to give error estimates for the $\lambda_k^{(h)} - \lambda_k$ as in section \ref{SS_KDM_PD}.
\begin{proposition}
\label{P_CPD}
Suppose  the following assumptions hold:
\begin{itemize}
\item $\M\subset \R^N$ is a compact, closed manifold and that Assumption \ref{A_LPR} holds on $\M$, 
\item  $\kernel:\M\times \M \to \R: (x,y) \mapsto  \rbf(x-y)$ is 
conditionally positive definite with respect to  $\mathcal{P}_{m-1}(\M)$,
with $m\ge 0$ in case of  the restricted Mat{\'e}rn kernel
and $m\ge \lfloor \theta/2\rfloor+1$ for the surface spline kernel,
\item  $\opL$ satisfies  Assumption \ref{A_op_cond} 
with $H =W_2^{(\theta+d)/2+1}(\M)/\mathcal{P}_{\ord-1} (\M)$
as well as
 Assumption \ref{ass:diag} 
 with $\varPi = \mathcal{P}_{m-1}(\M)$.
 \end{itemize}
If $\lambda_k$ is the $k$th solution 
to (\ref{Galerkin_CPD}) and 
 if $\EE{k}\subset \mathcal{A}_2$,
 then there are constants $C$ and $h^*>0$ so that if 
 $\Xi $ is sufficiently dense in $\M$, namely with $h(\Xi,\M) \le h^*$,
  then there is
 an eigenvalue $\lambda_k^{(h)}$  of $\MM$,
 specifically a diagonal element of $\mathbf{\Theta}$, %
so that 
$$0\le \lambda_{k}^{(h)}-\lambda_k\le C  h^{\theta +d-2}.$$
\end{proposition}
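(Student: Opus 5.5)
The plan is to mirror the proof of Proposition \ref{P_PD}, working throughout on the quotient space $H=W_2^{(\theta+d)/2+1}(\M)/\mathcal{P}_{\ord-1}(\M)$. There are three steps: apply the Babu{\v s}ka--Osborn theory to the symmetric coercive form $B$ from Assumption \ref{A_op_cond}, bound the approximation quantity $\epsilon_k$ using Theorem \ref{T_manifold_approximation}, and then identify the discrete Galerkin eigenvalue with an eigenvalue of $\MM$ lying in $\sigma(\mathbf{\Theta})$.

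\textbf{Step 1 (Galerkin theory on the quotient).} Since $\mathcal{P}_{\ord-1}(\M)$ is the nullspace of both $B$ and the native semi-inner product, problem (\ref{Galerkin_CPD}) is a well-posed symmetric eigenproblem on $H$. Its discrete counterpart lives on $V_{\Xi,\ord}(\kernel)/\mathcal{P}_{\ord-1}(\M)\subset H$. The min--max characterization (\ref{discrete_minimax_CPD}) already gives $\lambda_k^{(h)}\ge\lambda_k$. Applying \cite[Theorem 3.1]{BO} on the quotient yields, for $h$ below some threshold $h^*$,
$$
\lambda_k^{(h)}-\lambda_k\le C\,\epsilon_k^2,\qquad \epsilon_k=\sup_{u\in\EE{k},\ \|u\|_H=1}\ \inf_{v\in V_{\Xi,\ord}}\|u-v\|_H.
$$

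\textbf{Step 2 (bounding $\epsilon_k$).} Because $\EE{k}\subset\mathcal{A}_2$ is finite dimensional, the quotient norm $\|u\|_{\mathcal{A}_2}:=\inf\|\nu\|_{L_2(\M)}$ is equivalent to $\|u\|_H$ on $\EE{k}$. Here the infimum runs over representations $u=\int_{\M}\kernel(\cdot,z)\nu(z)\,\diff\sigma(z)+P$ with $\nu\perp\mathcal{P}_{\ord-1}(\M)$. I would take a near-minimizing representation and, to fix it linearly, choose a basis of $\EE{k}$ together with representing densities. Theorem \ref{T_manifold_approximation} with $p=q=2$ and $s=(\theta+d)/2+1$ then gives
$$
\inf_{v\in V_{\Xi,\ord}}\|u-v\|_{W_2^{(\theta+d)/2+1}(\M)}\le Ch^{(\theta+d)/2-1}\|\nu\|_{L_2(\M)}.
$$
This requires $(\theta+d)/2+1\le\theta+d/2$, i.e.\ $\theta\ge2$, which I will assume is implicit (otherwise $H$ does not contain $V_{\Xi}$ suitably). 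Passing to the quotient only decreases the norm, so $\epsilon_k\le C_kh^{(\theta+d)/2-1}$, and hence $\lambda_k^{(h)}-\lambda_k\le C h^{\theta+d-2}$.

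\textbf{Step 3 (linking to $\MM$) — the main obstacle.} The discrete Galerkin solutions are exactly the eigenpairs of (\ref{eq:discrete_GalerkinMat}), of which there are $n-J$ nontrivial ones modulo $\varPi$. By the preceding lemma, each eigenvalue of $\mathbf{\Theta}$ is either an honest eigenvalue of $\MM$ (if it is not in $\sigma(\mathbf{\Lambda})$) or a generalized one, and in either case it solves (\ref{eq:discrete_GalerkinMat}). What must be checked is the converse counting: the $n-J$ diagonal entries of $\mathbf{\Theta}$ should exhaust, with multiplicity, the $n-J$ min--max values in (\ref{discrete_minimax_CPD}). I would argue this from the block triangular form. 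Since $\AA$ has nullspace $R_{\mathbf{P}}$, the pencil $\AA\KK\AA-\lambda\AA$ restricted to a complement of $R_{\mathbf{P}}$ has the same characteristic polynomial as $\mathbf{\Theta}$, because $\MM$ induces $\mathbf{\Theta}$ on $\R^n/R_{\mathbf{P}}$. With the counting established, $\lambda_k^{(h)}$ is a diagonal entry of $\mathbf{\Theta}$ and the estimate from Step 2 completes the proof.
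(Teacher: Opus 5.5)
Your proposal is correct and is essentially the argument the paper intends (it gives no separate proof, only ``as in Section~\ref{SS_KDM_PD}''): Babu{\v s}ka--Osborn on the quotient $H$, Theorem~\ref{T_manifold_approximation} with $p=q=2$ and $s=(\theta+d)/2+1$ together with finite-dimensional norm equivalence on $\EE{k}$, and the block-triangular lemma to place $\lambda_k^{(h)}$ in $\sigma(\mathbf{\Theta})$; your multiplicity count via $\AA\MM=\AA\KK\AA$ and $N_{\AA}=R_{\mathbf{P}}$ usefully makes explicit the correspondence the paper only asserts. One small correction: Theorem~\ref{T_manifold_approximation} with $q=2$ excludes the endpoint $s=\theta+d/2$, so the implicit requirement is $\theta>2$, not $\theta\ge 2$.
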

\begin{figure}[htb]
    \centering
    \begin{tabular}{c}
    \includegraphics[width=0.6\linewidth]{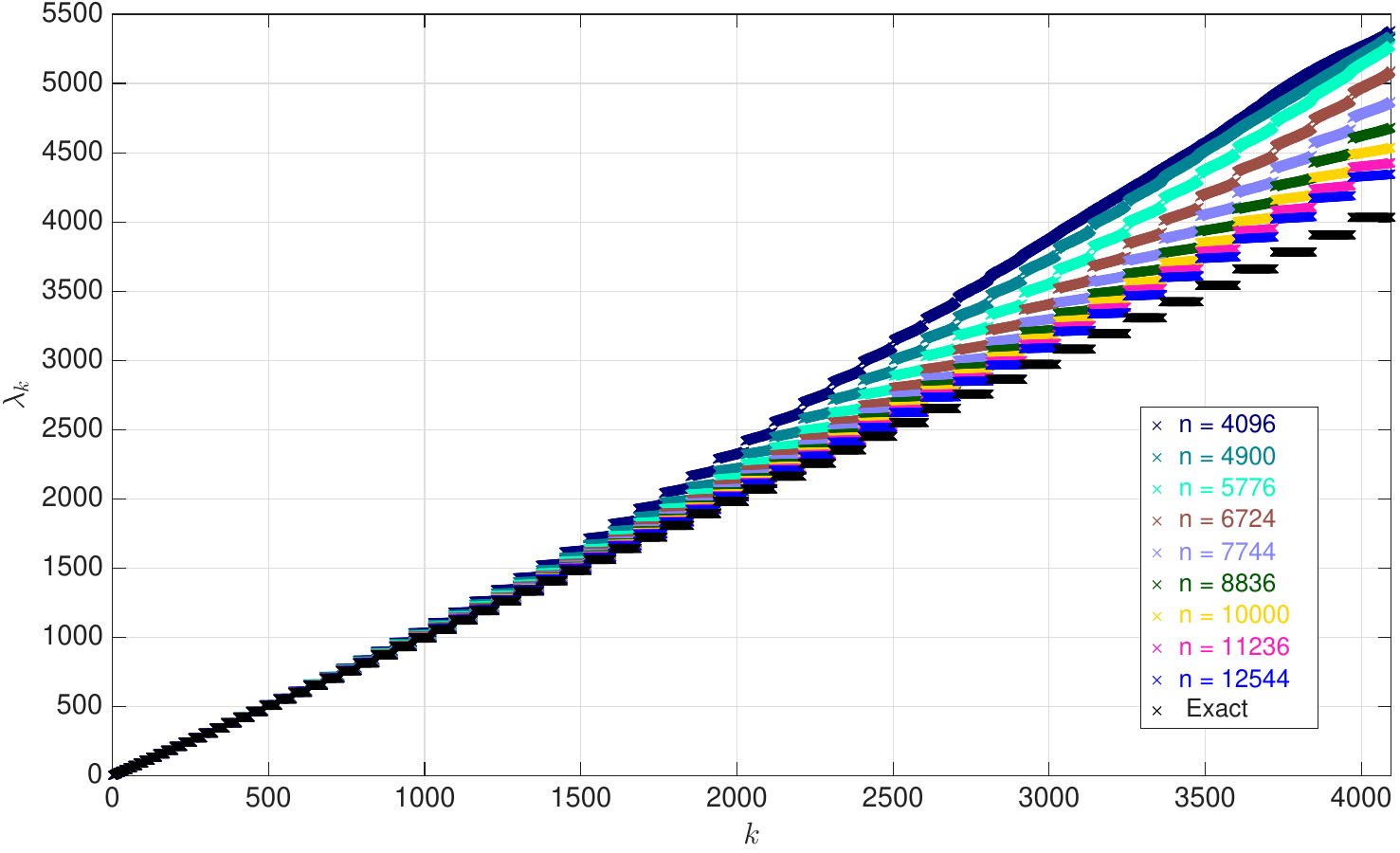} \\ (a) $\theta=4$ restricted surface spline \\
    \includegraphics[width=0.6\linewidth]{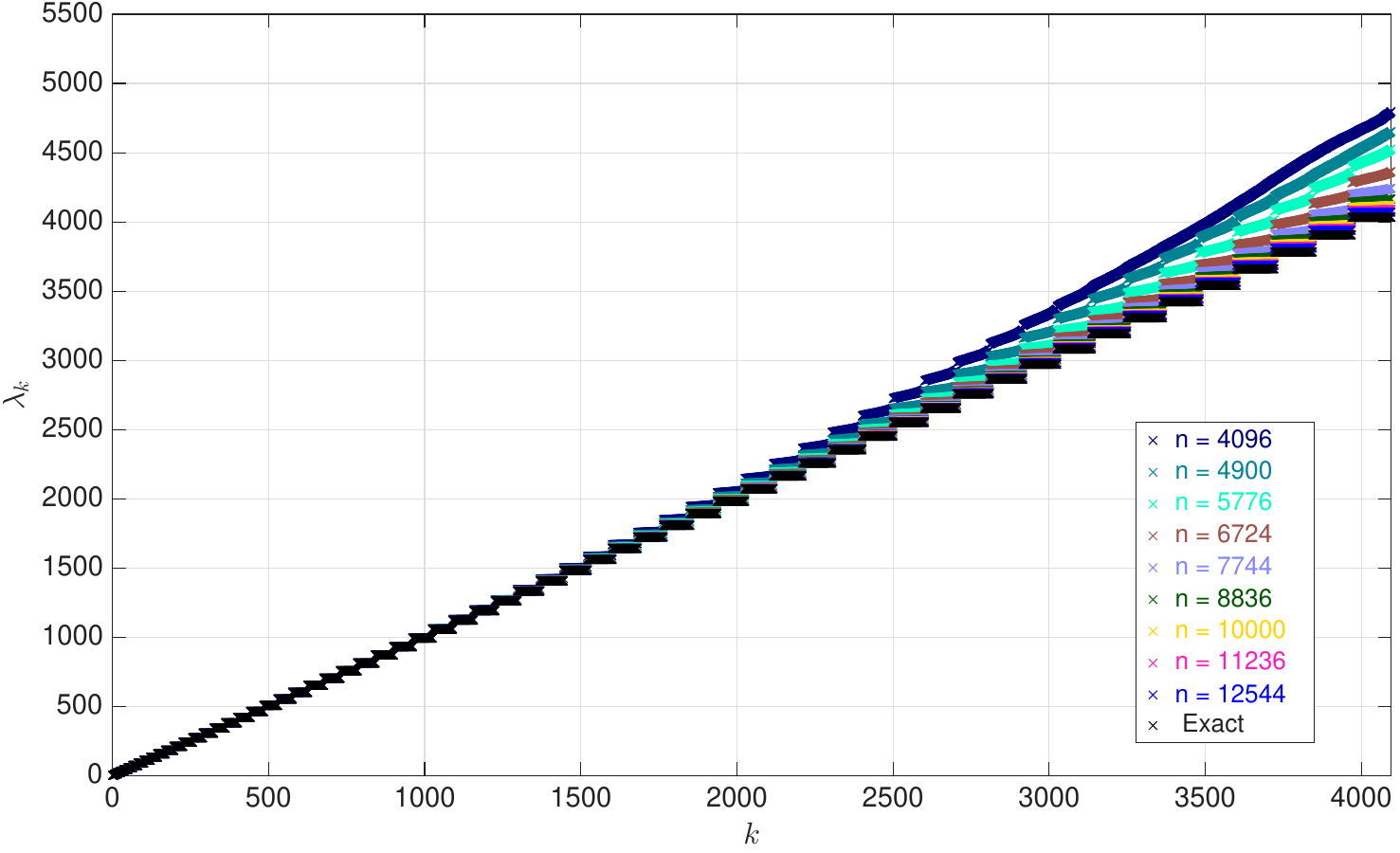} \\ (b) $\theta = 6$ restricted surface spline
    \end{tabular}
    \caption{Comparison of of the first 4096 eigenvalues, $\lambda_k^{(h)}$, of the differentiation matrix for different $N$, computed using the restricted surface splines of orders (a) $\theta=4$ and (b) $\theta = 6$. The solid black $\times$'s mark the exact eigenvalues $\lambda_k$ (with correct multiplicity) of the Laplace-Beltrami operator on the sphere}
    \label{fig:spectrumvsN}
\end{figure} 
As in the positive definite case, we have the following two corollaries for which
$\EE{k}\subset \mathcal{A}_2$ holds automatically and 
so this hypothesis 
can be  
omitted. 
\begin{cor}
\label{Sphere_CPD}
 Suppose
the hypotheses of Proposition \ref{P_CPD} hold, and that $\M=\Sph^d$.
Then
if $\lambda_k$ is the $k$th solution 
to (\ref{Galerkin_CPD})  
  there are constants $C$ and and $h^*>0$ so that if 
 $h=h(\Xi,\Sph^d)\le h^* $
  there is
 an eigenvalue $\lambda_k^{(h)}$  of $\MM$,
so that 
$$0\le \lambda_{k}^{(h)}-\lambda_k\le  C  h^{\theta +d-2}.$$
\end{cor}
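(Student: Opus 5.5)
The argument is a reduction to Proposition \ref{P_CPD}. The only hypothesis of that proposition not stated in Corollary \ref{Sphere_CPD} is $\EE{k}\subset \mathcal{A}_2$, so the work is to verify this inclusion on $\Sph^d$. It follows from two facts: eigenfunctions of (\ref{Galerkin_CPD}) are smooth, or at least lie in $W_2^{\theta+d}(\Sph^d)$, and Remark \ref{R_Spheres} identifies $\mathcal{A}_2$ with $W_2^{\theta+d}(\Sph^d)$, modulo $\mathcal{P}_{\ord-1}(\Sph^d)$.

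\emph{Regularity of eigenfunctions.} Let $u\in\EE{k}$. I would invoke the elliptic regularity discussion following Assumption \ref{A_elliptic}, which gives $u\in W_2^{\theta+d}(\Sph^d)$ when $\opL$ is elliptic with suitably smooth coefficients (for instance, a function of the Laplace--Beltrami operator). If the corollary is read as including Assumption \ref{A_elliptic} among its hypotheses, as in Corollary \ref{C_PD_sphere}, this step is immediate. A subtlety in the CPD setting is that (\ref{Galerkin_CPD}) is posed modulo $\mathcal{P}_{\ord-1}(\Sph^d)$. By Assumption \ref{ass:diag} this space is invariant under $\opL$, so a representative can be chosen for which $\opL u=\lambda u$ holds up to an element of $\varPi$. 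Such a $u$ still has the same Sobolev regularity, because polynomials (spherical harmonics) are smooth.

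\emph{Identification of $\mathcal{A}_2$.} By Remark \ref{R_Spheres}, the map $\nu\mapsto \mathrm{Proj}_{\ord-1}\nu+\mathcal{V}(1-\mathrm{Proj}_{\ord-1})\nu$ is an isomorphism from $L_2(\Sph^d)$ onto $W_2^{\theta+d}(\Sph^d)$. The reason is that the Mercer coefficients $c_{\ell,\mu}$ are positive for $\ell\ge \ord$ and decay like $(1+\ell)^{-(\theta+d)}$; this decay rate is the point to check, using \cite{Baxter,Odell} for the restricted Mat\'ern and surface spline kernels. Hence every $f\in W_2^{\theta+d}(\Sph^d)$ can be written as $f=\int_{\Sph^d}\nu(y)\kernel(\cdot,y)\diff\sigma(y)+P$ with $\nu\perp\mathcal{P}_{\ord-1}(\Sph^d)$, $P=\mathrm{Proj}_{\ord-1}f$, and $\|\nu\|_{L_2}\lesssim \|f\|_{W_2^{\theta+d}}$. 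This is exactly the statement $f\in\mathcal{A}_2$ from (\ref{eq:Ap}), and therefore $\EE{k}\subset\mathcal{A}_2$.

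\emph{Conclusion.} The sphere is a compact closed algebraic manifold, so Assumption \ref{A_LPR} holds by \cite{HRW_LPR}. Proposition \ref{P_CPD} then applies and yields an eigenvalue $\lambda_k^{(h)}$ of $\MM$ with $0\le\lambda_k^{(h)}-\lambda_k\le Ch^{\theta+d-2}$ whenever $h\le h^*$. The main obstacle is the regularity step: ensuring that the Galerkin eigenfunctions of (\ref{Galerkin_CPD}) lie in $W_2^{\theta+d}$. This needs either Assumption \ref{A_elliptic}, which I would treat as implicit in the hypotheses, or a bootstrapping argument: $u\in H=W_2^{(\theta+d)/2+1}$ together with $\opL u-\lambda u\in\varPi$ gives $u\in W_2^{s+2}$ whenever $u\in W_2^s$, and iterating yields $u\in W_2^{\theta+d}(\Sph^d)$.
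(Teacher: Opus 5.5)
Your proposal is correct and follows the paper's route. The paper justifies this corollary only by remarking that $\EE{k}\subset\mathcal{A}_2$ holds automatically on $\Sph^d$: eigenfunctions lie in $W_2^{\theta+d}(\Sph^d)$ by the elliptic-regularity Assumption \ref{A_elliptic}, Remark \ref{R_Spheres} gives $\mathcal{A}_2=W_2^{\theta+d}(\Sph^d)$, and Proposition \ref{P_CPD} then applies. You are also right that this regularity step is not literally among the stated hypotheses; even your bootstrap alternative needs ellipticity of $\opL$, so, as in the paper, Assumption \ref{A_elliptic} has to be taken as implicitly in force.
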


\begin{figure}[thb!]
    \centering
    \begin{tabular}{c}
    \includegraphics[width=0.6\linewidth]{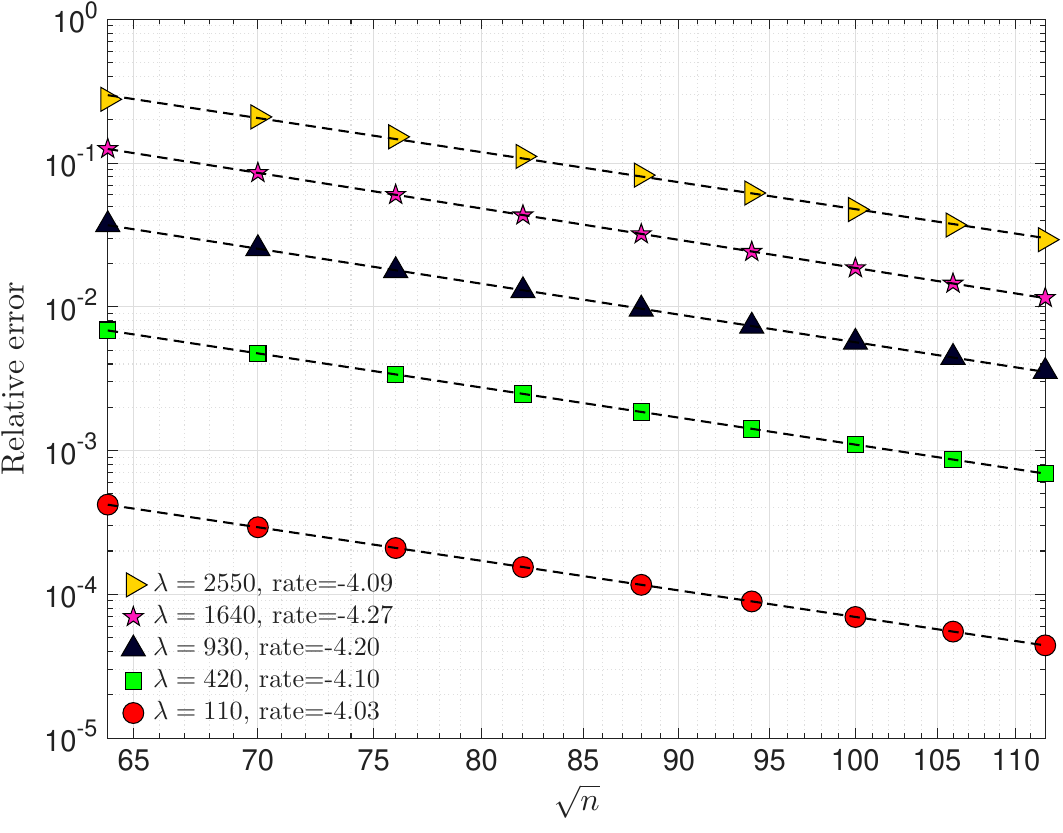} \\ (a) $\theta=4$ restricted surface spline \\
    \includegraphics[width=0.6\linewidth]{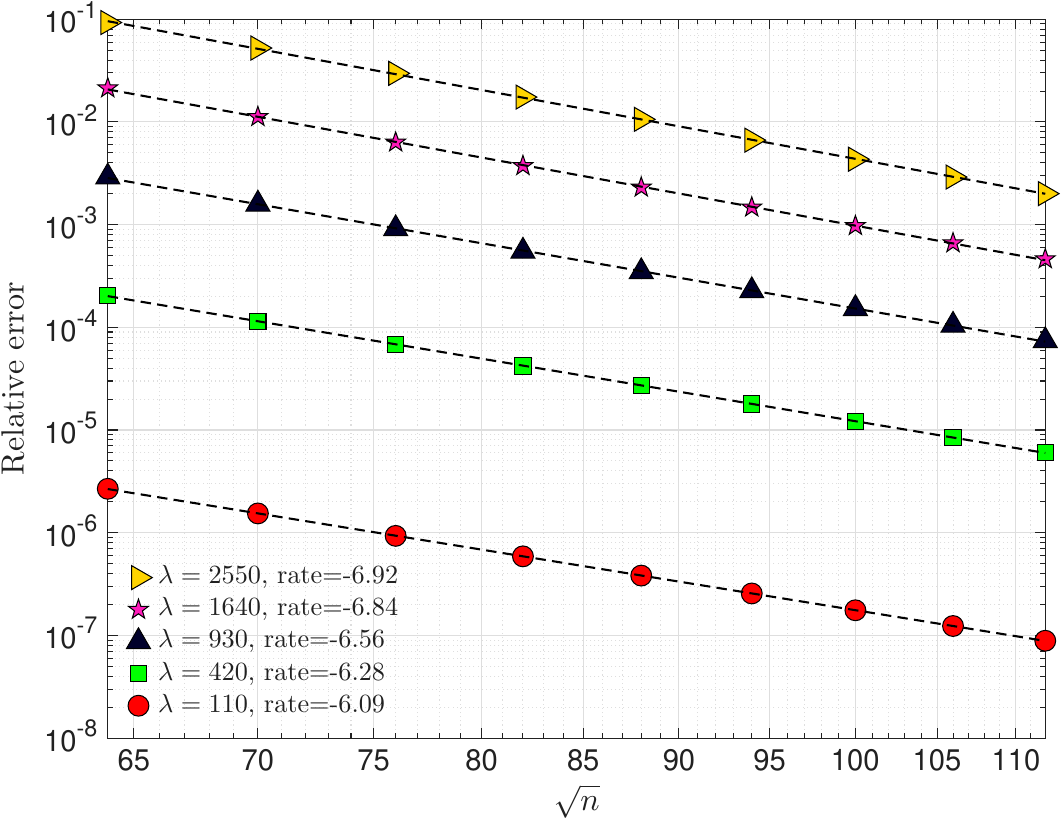} \\ (b) $\theta = 6$ restricted surface spline
    \end{tabular}
    \caption{Convergence results for approximating various eigenvalues of the Laplace-Beltrami operator on $\Sph^2$ using DMs computed from restricted surface splines of orders 2 (a) and 3 (b). The dashed black lines are the lines of best fit to the data, and the estimated rates of convergence from these lines are given in the legend.}
    \label{fig:EgvlConvgvsN}
\end{figure} 

\begin{cor}
\label{Boundary_CPD}
 Suppose
 the hypotheses of Proposition \ref{P_CPD} hold, and moreover
that $\M=\partial \Upsilon$ 
for a  bounded open subset  $\Upsilon$ of $\R^N$
 and $\theta +N\in 2\N$.
Then
if $\lambda_k$ is the $k$th solution 
to (\ref{Galerkin_CPD})  
   there are constants $C$ and and $h^*>0$ so that if 
 $h=h(\Xi,\Sph^d)\le h^* $
  there is
 an eigenvalue $\lambda_k^{(h)}$  of $\MM$,
so that 
$$0\le \lambda_{k}^{(h)}-\lambda_k\le  C  h^{\theta +N-3}.$$
\end{cor}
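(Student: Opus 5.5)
The plan is to reduce Corollary \ref{Boundary_CPD} to Proposition \ref{P_CPD}, with the exponent $\theta+d-2$ rewritten using $d=N-1$. Since $\M=\partial\Upsilon$ has codimension $k=1$, we have $d=N-1$, and therefore
$$\theta+d-2=\theta+N-3.$$
Once the hypothesis $\EE{k}\subset\mathcal{A}_2$ of Proposition \ref{P_CPD} is verified, the stated bound is exactly the one that proposition delivers. All other hypotheses of Proposition \ref{P_CPD} are assumed in the corollary. So the whole task is to show that the eigenspace $\EE{k}$ lies in the range $\mathcal{A}_2$ of the single layer potential augmented by polynomials.

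The key step is to identify $\mathcal{A}_2$ with a Sobolev space via Lemma \ref{lem:Apboundary}. Because $\theta+N\in 2\N$, we may write $\theta+N=2\DiffOrd$ with $\DiffOrd$ an integer. Moreover $\theta>0$ gives $\DiffOrd>N/2$, so $\rbf$ is the fundamental solution of an elliptic homogeneous operator of order $2\DiffOrd$. This is the setting of Lemma \ref{lem:Apboundary}, and the order conditions on $\ord$ in that lemma match those in Proposition \ref{P_CPD}.

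We apply the lemma with $p=2$ and $s=0$. It says that $\mathcal{V}^\sharp$ is boundedly invertible from $L_2(\M)\times\R^{L_{\ord-1}}$ onto $W_2^{2\DiffOrd-1}(\M)\times\R^{L_{\ord-1}}$. Here $2\DiffOrd-1=\theta+N-1=\theta+d$.

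Now take any $u\in W_2^{\theta+d}(\M)$. Solve $\mathcal{V}^\sharp(\nu,\vec c)=(u,\vec 0)$. This gives $\nu\in L_2(\M)$ with $\nu\perp\mathcal{P}_{\ord-1}(\M)$ and $u=\mathcal{V}\nu+\sum_j c_jp_j$, so $u\in\mathcal{A}_2$. The same inversion gives $\|\nu\|_{L_2}\lesssim\|u\|_{W_2^{\theta+d}}$, which is the norm comparison used inside the proof of Proposition \ref{P_CPD}. Hence $W_2^{\theta+d}(\M)\subset\mathcal{A}_2$.

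It then remains to show $\EE{k}\subset W_2^{\theta+d}(\M)$. This is the elliptic regularity of Assumption \ref{A_elliptic}, in the CPD Galerkin formulation (\ref{Galerkin_CPD}). It holds, as discussed before the positive definite corollaries, when $\opL$ is an elliptic operator with sufficiently smooth coefficients. The polynomial part causes no difficulty, since $\mathcal{P}_{\ord-1}(\M)$ consists of smooth functions. I expect this regularity step to be the main subtlety, because it relies on assumptions about $\opL$ beyond those stated in Proposition \ref{P_CPD}. I would handle it by invoking Assumption \ref{A_elliptic}, exactly as in Corollary \ref{C_PD_boundary}.

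With $\EE{k}\subset\mathcal{A}_2$ established, Proposition \ref{P_CPD} gives constants $C$ and $h^*$, and an eigenvalue $\lambda_k^{(h)}$ of $\MM$ among the diagonal entries of $\mathbf{\Theta}$, such that
$$0\le\lambda_k^{(h)}-\lambda_k\le Ch^{\theta+d-2}=Ch^{\theta+N-3},$$
which is the claim.
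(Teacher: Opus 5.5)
Your proposal is correct and matches the argument the paper indicates (it gives no separate proof, only the remark that $\EE{k}\subset\mathcal{A}_2$ ``holds automatically''): with $d=N-1$ and $\theta+N=2\DiffOrd$, Lemma \ref{lem:Apboundary} at $p=2$, $s=0$ gives $W_2^{\theta+d}(\M)\subset\mathcal{A}_2$, elliptic regularity gives $\EE{k}\subset W_2^{\theta+d}(\M)$, and Proposition \ref{P_CPD} then yields $h^{\theta+d-2}=h^{\theta+N-3}$. You are also right that Assumption \ref{A_elliptic} is not among the listed hypotheses; the paper's phrase ``as in the positive definite case'' tacitly imports it, exactly as in Corollary \ref{C_PD_boundary}.
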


\subsection{Numerical Experiments\label{sec:egvl_numerics}}

In this section, we present numerical experiments illustrating the convergence results from the previous section. While the spectrum of the Laplace-Beltrami operator on $\M=\Sph^2$ is known explicitly, it provides an ideal setting for validating the spectral convergence predicted by our analysis. To this end, we compare the exact eigenvalues of the Laplace-Beltrami operator with those of the DMs constructed from the restricted surface spline kernel \eqref{eq:CPDDM}. As in section~\ref{sec:numerics_higher_norms}, we use maximum determinant point sets in all experiments.

Figure \ref{fig:spectrumvsN} compares the exact eigenvalues for $1\leq k \leq 4096$ with those of the DMs constructed for point sets of various sizes $n$. Results for both the  $\theta = 4$ and $\theta = 6$ surface spline kernels (which are CPD of orders 3 and 4, respectively) are included to illustrate the improved approximation with higher-order kernels.  In both cases, we see that the lower portion of the spectrum is approximated more accurately, consistent with the convergence theory.

The convergence of the eigenvalue approximations is examined in more detail in Figure \ref{fig:EgvlConvgvsN}, which displays the relative errors vs $\sqrt{n}$ for a select number of eigenvalues.  For each eigenvalue, we compute the maximum relative error in approximation, taking into account the eigenvalues algebraic multiplicity.  For the $\theta=4$ restricted surface spline results (Figure \ref{fig:EgvlConvgvsN}(a)), the estimated convergence rates closely match the predicted  fourth-order rate.  For $\theta=6$ (Figure~\ref{fig:EgvlConvgvsN}(b)), the smaller eigenvalues exhibit the expected sixth-order convergence, whereas the larger eigenvalues appear to converge at a higher rate. This behavior is consistent with the theory in~\cite{BO}, which predicts that the pre-asymptotic regime for approximating $\lambda_k$ grows with $k$.  Consequently, for the larger eigenvalues considered here, the computations may not yet have reached the asymptotic regime in which the predicted sixth-order convergence is observed.
 
\appendix
\section{Discrete approximation of \texorpdfstring{$d$}{}-measures}
\label{S_measures}
In this section we consider, 
for a set $\Omega$ and  measure $\sigma$ satisfying Assumption \ref{ball_comparison}
and a function  $\nu\in L_p(\sigma)$ defined on $\Omega$,
a method for approximating the push-forward $\tilde{\nu}$ of $\nu\,\diff \sigma$ 
(considered as a measure on $\R^N$) by using a local polynomial reproduction of $\Omega$.
The approximant is the discrete measure
$$\tilde{\nu}_{\Xi} = \sum_{\xi\in \Xi} \Bigl(\int_{\Omega} a(\xi,y) \nu(y)\diff \sigma(y)\Bigr) \delta_{\xi}$$
supported on $\Xi$.
The error $\tilde{\nu} -\tilde{\nu}_{\Xi}$ measured in Besov norms
can be directly estimated using  the norms and  semi-norms given by the spectral partition of unity
in the definition in section \ref{SS_smoothness}.

There are two main tools used in this section. The first is a consequence of Assumption \ref{ball_comparison} is that we can estimate certain integrals over $\Omega$ by
way of a series test.
In particular, if $f:\Omega\to [0,\infty)$ satisfies $f(x) = g(a|x-y|)$ for some $y\in \R^N$ and $a>0$ and 
decreasing function $g:[0,\infty)\to[0,\infty)$ for which $\sum 2^{jd} g(2^j)<\infty$, then by decomposing 
$\Omega$ en annuli, with $A_j = \{x\in \Omega\mid 2^{j-1}\le a |x-y| \le2^ j \}$, we obtain
\begin{equation}
\label{integral_test}\int_{\Omega} f(x)\diff \sigma(x) \le \omega_{\Omega} \sum_{j=1}^{\infty}  \left(\frac{2^j}{a}\right)^d g(2^{j-1})\le C\ a^{-d}  \bigl(g(0)+\sum_{j=1}^{\infty}  2^{jd}g(2^{j-1})\bigr),
\end{equation}
with $C = 2^d \omega_{\Omega}$.

The second tool is the following observation:
for a general Schwartz function $f$, we
have $(\tilde{\nu} - \tilde{\nu}_{\xi})*f = \int_{\Omega} \nu(y) \left[ f(x-y) - \sum_{\xi\in\Xi} a(\xi,y) f(x-\xi) \right] \diff \sigma(y).$
Convolving the error with a function $g$  leads to the corresponding approximation problem for $g$ using the LPR.
This is treated by \cite[Lemma 4]{HR-Extending}, which states that for any $m\le L$, that
\begin{equation}
\label{LPR_error}
\Bigl| f(x-y) - \sum_{\xi\in\Xi} a(\xi,y) f(x-\xi) \Bigr| \le  C h^{m}  \max_{|\beta|=m} \|D^{\beta} f(x-\cdot)\|_{B(y,Kh)}
\end{equation}
holds. The freedom to choose $m$ less than $L$ is important (the LPR also reproduces $\mathcal{P}_{m-1}$ for $m\le L$). 

We can use this to treat the $L_p$  (low frequency) component of the Besov norm by the following theorem.
\begin{lemma}
There exists a constant $C$ (depending on the family $\HF, \LF$, the constant $K$ in Assumption \ref{A_LPR}), 
 and the constant $\omega_{\Omega}$ in Assumption \ref{ball_comparison}) 
so that
for $1\le p\le \infty$ there 
$$\| \LF*(\tilde{\nu}- \tilde{\nu}_{\Xi})\|_{L_p(\R^N)}< C h^{L} \|\nu\|_{L_p(\sigma)}
$$
\end{lemma}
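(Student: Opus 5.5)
We write the low-frequency component pointwise with the second tool, namely the identity
$$\LF*(\tilde{\nu}-\tilde{\nu}_{\Xi})(x)=\int_{\Omega}\nu(y)\Bigl[\LF(x-y)-\sum_{\xi\in\Xi}a(\xi,y)\LF(x-\xi)\Bigr]\diff\sigma(y),$$
and apply (\ref{LPR_error}) with $m=L$. This yields the kernel bound
$$|E(x,y)|:=\Bigl|\LF(x-y)-\sum_{\xi}a(\xi,y)\LF(x-\xi)\Bigr|\le C h^{L}\max_{|\beta|=L}\|D^{\beta}\LF(x-\cdot)\|_{L_\infty(B(y,Kh))}.$$

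Since $\LF\in\mathcal{S}(\R^N)$, for any $M$ we have $|D^{\beta}\LF(z)|\le C_M(1+|z|)^{-M}$ for $|\beta|=L$. Assuming $h\le h_0$ with $Kh_0\le 1$, the point $w\in B(y,Kh)$ satisfies $1+|x-w|\ge \tfrac12(1+|x-y|)$. Hence
$$|E(x,y)|\le C h^{L}G(x-y),\qquad G(z):=(1+|z|)^{-M},$$
where $M>N+d$ is fixed and $C$ depends only on $\LF$, $K$ and $L$.

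The pointwise bound now reduces the claim to an $L_p$ bound for the integral operator $\nu\mapsto\int_{\Omega}G(\cdot-y)|\nu(y)|\diff\sigma(y)$ from $L_p(\sigma)$ to $L_p(\R^N)$. This is the only nontrivial step, and I will handle it by Schur's test (interpolating between $p=1$ and $p=\infty$). Two estimates are needed.

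\textbf{Row bound.} We need $\sup_{x\in\R^N}\int_{\Omega}G(x-y)\diff\sigma(y)\le C$. This follows from the series estimate (\ref{integral_test}) with $a=1$ and $g(t)=(1+t)^{-M}$: the sum $\sum_j 2^{jd}2^{-(j-1)M}$ converges because $M>d$. The resulting constant is $C\omega_\Omega$.

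\textbf{Column bound.} We need $\sup_{y}\int_{\R^N}G(x-y)\diff x=\|G\|_{L_1(\R^N)}<\infty$, which holds since $M>N$.

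With these two bounds, H\"older's inequality gives, for $1<p<\infty$,
$$\Bigl|\int G(x-y)\nu(y)\diff\sigma(y)\Bigr|^{p}\le\Bigl(\int G(x-y)\diff\sigma(y)\Bigr)^{p-1}\int G(x-y)|\nu(y)|^{p}\diff\sigma(y).$$
Integrating in $x$ and using Fubini together with the column bound gives $\|\cdot\|_{L_p(\R^N)}\le C\|\nu\|_{L_p(\sigma)}$. The endpoint cases are direct: for $p=\infty$ use the row bound, and for $p=1$ use Fubini with the column bound.

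Combining this operator bound with the factor $h^{L}$ from the kernel estimate gives
$$\|\LF*(\tilde\nu-\tilde\nu_\Xi)\|_{L_p(\R^N)}\le C h^{L}\|\nu\|_{L_p(\sigma)}.$$
The constant depends only on $\LF$, $K$, $L$ and $\omega_\Omega$, as claimed. No other step is delicate; the only care needed is the uniform comparison $1+|x-w|\sim 1+|x-y|$ for $w\in B(y,Kh)$, which holds once $h$ is bounded.
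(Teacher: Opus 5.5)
Your proposal is correct and follows essentially the same route as the paper. Both arguments apply (\ref{LPR_error}) with $m=L$ and use the Schwartz decay of $\LF$ to get a kernel bound $Ch^{L}(1+|x-y|)^{-M}$, then finish with Schur's test: the row bound comes from (\ref{integral_test}) with $a=1$, and the column bound from integrability over $\R^N$.

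The differences are cosmetic. The paper takes the decay exponent $N+1$, which already suffices since $d\le N$. It passes to general $p$ by interpolation, where you use a direct H\"older argument. You also make explicit the boundedness of $h$ needed for the comparison $1+|x-w|\sim 1+|x-y|$, which the paper leaves implicit.
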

\begin{proof}
Applying (\ref{LPR_error})  to $\LF$ with $m=L$ gives
$$\Bigl| 
\LF(x-y) - 
\sum_{\xi\in\Xi} a(\xi,y) \LF(x-\xi) 
\Bigr| 
\le  
C h^{L} (1+ |x-y|)^{-(N+1)}$$
where $C$ incorporates 
the  constant 
in  (\ref{LPR_error}) and the 
Schwartz seminorm of $\LF$.
The result follows by Schur's test, 
first by
integrating this expression with 
respect to $x$ (over $\R^N$) 
and $y$ 
(over $\Omega$,
using (\ref{integral_test}) with 
$g(x) = (1+|x|)^{-(N+1)}$ and $a= 1$)
to get endpoint bounds
and then using interpolation to get the result for general $p$.
\end{proof}
To treat the semi-norms
$\| \HF_k*(\tilde{\nu}- \tilde{\nu}_{\Xi})\|_{L_p(\R^N)}$,
 we need 
a more precise, $k$-dependent 
estimate  on
$\left| \HF_k(x-y) - \sum_{\xi\in\Xi} a(\xi,y) \HF_k(x-\xi) \right|$. The challenge is that this involves two 
spatial resolutions:the support radius $2Kh$ of the LPR and the 
scaling $2^{-k}$ of the test function $\psi_k$.
\begin{lemma}
There is $C$ so that for every $k\in\N$, the inequality
 \begin{equation}
 \label{ptwise_convo_bound}
 \Bigl| \HF_k(x-y) - \sum_{\xi\in\Xi} a(\xi,y) \HF_k(x-\xi) \Bigr| 
 \le
 C2^{kN} (h2^{k})^{\omega} 
 \begin{cases}
 1 & \text{for all } x,y, \\
\left(2^k{|x-y|}\right)^{-(N+1)} 
& \text{if } |x-y|>2Kh
\end{cases}
 \end{equation}
 holds for any real $ \omega \in[0, L]$.
\end{lemma}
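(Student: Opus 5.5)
Since $\HF_k = 2^{kN}\HF(2^k\cdot)$, we have $D^\beta \HF_k(z) = 2^{kN}2^{k|\beta|}(D^\beta\HF)(2^k z)$. The plan is to apply (\ref{LPR_error}) to $f=\HF_k$ with an integer order $m\le L$, and then interpolate between integer orders to reach a real $\omega\in[0,L]$. For integer $m\in\{0,1,\dots,L\}$, (\ref{LPR_error}) gives
$$\Bigl| \HF_k(x-y) - \sum_{\xi\in\Xi} a(\xi,y) \HF_k(x-\xi) \Bigr| \le C h^m 2^{kN}2^{km}\max_{|\beta|=m}\sup_{z\in B(y,Kh)}|(D^\beta\HF)(2^k(x-z))|.$$
For $m=0$ we do not need (\ref{LPR_error}) at all: the bound $\sum|a(\xi,y)|\le 2$ together with the support condition $\mathrm{supp}\,a(\cdot,y)\subset B(y,Kh)$ gives the same form directly. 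Since $\HF$ is Schwartz, each $D^\beta\HF$ with $|\beta|\le L$ is bounded by $\Gamma(1+|w|)^{-(N+1)}$. This yields the first case of (\ref{ptwise_convo_bound}) with $\omega=m$, with the bracket simply bounded by $1$.

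For the second case we use the geometry of the supports. If $|x-y|>2Kh$ and $z\in B(y,Kh)$, then $|x-z|\ge |x-y|-Kh\ge |x-y|/2$. Hence $(1+2^k|x-z|)^{-(N+1)}\le 2^{N+1}(2^k|x-y|)^{-(N+1)}$, which gives the decaying factor in (\ref{ptwise_convo_bound}) for integer $\omega=m$.

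To pass to real $\omega\in[0,L]$, write $\omega=m+\vartheta$ with $m=\lfloor\omega\rfloor$ and $\vartheta\in[0,1)$; if $\omega=L$ we are already in the integer case. The left-hand side $E$ satisfies $E\le A_m(h2^k)^m$ and $E\le A_{m+1}(h2^k)^{m+1}$, where $A_j$ denotes the common right-hand factor $C2^{kN}\cdot\{1 \text{ or } (2^k|x-y|)^{-(N+1)}\}$, which is the same for both orders. Then
$$E=E^{1-\vartheta}E^{\vartheta}\le C 2^{kN}(h2^k)^{m(1-\vartheta)+(m+1)\vartheta}\cdot\{\cdots\}=C2^{kN}(h2^k)^{\omega}\{\cdots\}.$$
This geometric-mean step is valid in both regimes $h2^k\le1$ and $h2^k>1$. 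The constants are uniform in $k$ because only finitely many derivatives of $\HF$, up to order $L$, are involved.

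The main point to check is that the constant in (\ref{LPR_error}) is independent of $k$ and $y$. That lemma's constant depends only on $K$, $L$, $m$ and the dimension. The whole $k$-dependence enters through the chain-rule scaling of the derivatives, so this uniformity holds.
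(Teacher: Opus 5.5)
Your proof is correct and follows essentially the same route as the paper. The paper applies (\ref{LPR_error}) with the chain rule for the uniform bound, and for the decay bound it rescales the local polynomial reproduction to $2^k\Xi\times 2^k\Omega$, applies (\ref{LPR_error}) to $\HF$ itself and changes variables back; this is equivalent to your keeping the supremum of $(D^\beta\HF)(2^k(x-z))$ over $z\in B(y,Kh)$ and using $|x-z|\ge |x-y|/2$. Your geometric-mean step makes explicit the passage to non-integer $\omega\in[0,L]$, which the paper only says ``extends naturally.''
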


\begin{proof}We begin with the observation: $\left| \HF_k(x-y) - \sum_{\xi\in\Xi} a(\xi,y) \HF_k(x-\xi) \right| \le
 C2^{kN}(h2^{k})^{m} $,
 which follows from a direct application of (\ref{LPR_error}) using the chain rule, the fact that
each  function $D^{\beta}\HF$ is bounded, and $\sum_{\xi\in\Xi}| a(\xi,y)|\le 2$. 
This extends naturally to non-integer cases $\omega$ which lie in the interval $[0,L]$.
In particular, this treats the first case, $|x-y|<2Kh$.

\smallskip
 
 A finer estimate is possible if we consider scaling 
 and applying (\ref{LPR_error}). 
 In that case, consider  the map
 $$\mathring{a}: 2^k\Xi\times 2^k\Omega \to \R : (2^k\xi, 2^k y)\mapsto
  a(\xi, y).$$
  By dilation invariance of $\mathcal{P}_{L}(\R^N)$ this is a local polynomial
  reproduction of degree $L$ on $2^k \Omega$
  with support 
   $\mathrm{supp}\Bigl( \mathring{a}(\cdot,  Y)\Bigr)
 \subset B(Y, 2^k K h)$.
  Indeed, for $p\in \mathcal{P}_{L}(\R^N)$ and $Z\in 2^k \Omega$, the equation
  $$\sum_{2^k\xi\in 2^k\Xi} \mathring{a}(2^k \xi, Z) p(2^k \xi) = \sum_{\xi\in \Xi}  a(\xi, 2^{-k} Z) p(2^k \xi)
   = p\bigl(2^k (2^{-k}Z)\bigr) = p(Z)$$
   holds.

 By (\ref{LPR_error}),
 applying $\mathring{a}$
to $\HF$, reveals
 for $X\in \R^N$ and $Y\in 2^k\Omega$ that
$$ \Bigl| \HF(X-Y) - \sum_{\xi\in\Xi} \mathring{a}(2^k \xi,Y) \HF(X-2^k \xi) \Bigr| \le  C  (h2^{k})^{m} 
 \max_{|\beta|=m} \|D^{\beta} \HF (X-\cdot)\|_{B(Y, 2^k K h)}.
 $$ 
 If $|X-Y|> 2 (2^k K h)$, and if $Z\in  B(Y, 2^k K h)$, then
 $|X -  Z| > \frac{1}{2} |X-Y|$. 
 It follows from $\HF\in \mathcal{S}(\R^N)$  that
 for any $\nu$ there is a constant $C$ so that
 \begin{eqnarray*}
 \Bigl| \HF(X-Y) - \sum_{\xi\in\Xi} \mathring{a}(2^k \xi,Y) \HF(X-2^k \xi) \Bigr| 
& \le&
 C  (h2^{k})^{m} 
 \min_{Z\in B(Y, 2^k Kh)}
 |X-Z|^{-(N+1)}  \\
&\le&
2^{N+1} C  (h2^{k})^{m} 
 |X-Y|^{-(N+1)}.  
 \end{eqnarray*}
The integer value $m$ can be replaced by any real $\omega \in[0, L]$.

Finally, by changing variables $X,Y$ to $x = 2^{-k} X$ and $y = 2^{-k} Y$,
the definition  $\HF_k = 2^{kN} \HF(2^k\cdot)$ gives
$\left| \HF_k(x-y) - \sum_{\xi\in\Xi} a(\xi,y) \HF_k(x-\xi) \right| \le  C 2^{kN} (h2^{k})^{\omega} 
 (2^k {|x-y|})^{-(N+1)}.  $ Combining this with the earlier bound and increasing the constant gives the result.
 \end{proof}
\begin{lemma}
There exists a constant $C$ (depending on the family $\HF, \LF$, the constant $K$ in Assumption \ref{A_LPR}, 
 and the constant $\omega_{\Omega}$ in Assumption \ref{ball_comparison}) 
so that
for $1\le p\le \infty$ there 
$$
\| \HF_k*(\tilde{\nu}-\tilde{\nu}_{\Xi})\|_{L_p(\R^N)}< C \|\nu\|_{L_p(\sigma)}\begin{cases}
2^{k\frac{N-d}{p'}} (h2^k)^{\omega}
&2K h2^k <1\\
2^{k \frac{N}{p'} }   h^{\frac{d}{p'}} 
&2K h2^k \ge 1.\\
\end{cases}$$
\end{lemma}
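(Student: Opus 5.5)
The plan is to write the error as an integral operator acting on $\nu$ and bound it with Schur's test. By the identity recalled at the start of this appendix,
$$\HF_k*(\tilde{\nu}-\tilde{\nu}_{\Xi})(x) = \int_{\Omega} \nu(y)\, E_k(x,y)\diff\sigma(y),\qquad E_k(x,y):=\HF_k(x-y)-\sum_{\xi\in\Xi}a(\xi,y)\HF_k(x-\xi).$$
This operator maps $L_p(\sigma)$ to $L_p(\R^N)$. We first prove the endpoint bounds $p=1$ and $p=\infty$, and then obtain general $p$ from Schur's test, i.e.\ Riesz--Thorin interpolation of the two endpoints. Writing $A:=\sup_{y}\int_{\R^N}|E_k(x,y)|\diff x$ and $B:=\sup_x\int_\Omega|E_k(x,y)|\diff\sigma(y)$, the operator norm is at most $A^{1/p}B^{1/p'}$. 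So it suffices to bound $A$ and $B$ in the two regimes $2Kh2^k<1$ and $2Kh2^k\ge 1$.

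\textbf{Regime $2Kh2^k<1$.} Here we use the pointwise bound (\ref{ptwise_convo_bound}) with exponent $\omega$.
\begin{itemize}
\item For $A$, integrate over $x\in\R^N$. The bound equals $C2^{kN}(h2^k)^\omega$ on the ball $|x-y|\le 2Kh$, which has volume $\lesssim h^N\le 2^{-kN}$. Away from that ball the bound carries the decaying factor $(2^k|x-y|)^{-(N+1)}$, whose integral over $\R^N$ is $\lesssim 2^{-kN}$. Together, $A\lesssim (h2^k)^\omega$.
\item For $B$, integrate over $y\in\Omega$ against $\sigma$. We apply the annular estimate (\ref{integral_test}) with $a=2^k$ and $g(t)=\min(1,t^{-(N+1)})$; this is admissible since $\sum_j 2^{jd}g(2^j)<\infty$ because $d<N+1$. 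This gives $B\lesssim 2^{kN}(h2^k)^\omega 2^{-kd}$.
\end{itemize}
Interpolating, $A^{1/p}B^{1/p'}\lesssim (h2^k)^\omega 2^{k(N-d)/p'}$, which is the first case.

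\textbf{Regime $2Kh2^k\ge 1$.} Here we do not estimate $E_k$ as a difference; we bound the two terms separately.
\begin{itemize}
\item $\|\HF_k*\tilde{\nu}\|_{L_p}$: the kernel $\HF_k(x-y)$ has $A\lesssim 1$ and, by (\ref{integral_test}), $B\lesssim 2^{k(N-d)}$. Hence this term is $\lesssim 2^{k(N-d)/p'}\|\nu\|$, which is at most $2^{kN/p'}h^{d/p'}$ because $(2^kh)^{d/p'}\gtrsim 1$.
\item $\|\HF_k*\tilde{\nu}_\Xi\|_{L_p}$: the kernel is $\sum_\xi a(\xi,y)\HF_k(x-\xi)$. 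Since $\sum_\xi|a(\xi,y)|\le 2$, we get $A\le 2\|\HF_k\|_{L_1}\lesssim 1$. For $B$, use $|a(\xi,y)|\le 2$ and the support condition $|\xi-y|<Kh$ to obtain
$$B\le 2\sum_\xi |\HF_k(x-\xi)|\,\sigma\bigl(B_\Omega(\xi,Kh)\bigr)\lesssim h^d\sum_\xi|\HF_k(x-\xi)|.$$
\end{itemize}

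The main obstacle is the discrete sum $\sum_\xi|\HF_k(x-\xi)|$. The crude bound is $\#\Xi\cdot 2^{kN}$, which is too weak if $\Xi$ has many points. The cleaner route is to bound $B$ without passing through the point set at all. Interchanging sums,
$$B=\sup_x\int_\Omega\sum_\xi|a(\xi,y)||\HF_k(x-\xi)|\diff\sigma(y),$$
and using $|\HF_k(x-\xi)|\le 2^{kN}\min\bigl(1,(2^k|x-\xi|)^{-(N+1)}\bigr)$ with $|\xi-y|<Kh$ gives
$$B\lesssim \int_\Omega 2^{kN}\min\bigl(1,(|x-y|/h)^{-(N+1)}\bigr)\diff\sigma(y)\lesssim 2^{kN}h^d$$
by (\ref{integral_test}) with $a=1/h$. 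Here the decay is taken at scale $h$ (valid since $2^{-k}\lesssim h$), and the factor $\sum_\xi|a(\xi,y)|\le 2$ absorbs the sum over $\xi$. Then $A^{1/p}B^{1/p'}\lesssim 2^{kN/p'}h^{d/p'}$, which is the second case.
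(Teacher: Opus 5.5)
Your argument is essentially the paper's: Schur's test (equivalently Riesz--Thorin between the $p=1$ and $p=\infty$ endpoints), with (\ref{ptwise_convo_bound}) and (\ref{integral_test}) in the regime $2Kh2^k<1$, and a split into $\HF_k*\tilde\nu$ and $\HF_k*\tilde\nu_\Xi$ in the regime $2Kh2^k\ge 1$. The only variation is at the $p=\infty$ endpoint of the second regime. There the paper applies (\ref{ptwise_convo_bound}) with $\omega=0$ to the full error kernel and splits $\Omega$ into $B_\Omega(x,2Kh)$ and dyadic annuli. You instead split off the discrete term and move the decay from $|x-\xi|$ to $|x-y|$ at scale $h$ by hand, which is the same mechanism that produces (\ref{ptwise_convo_bound}), so nothing is gained or lost.

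One step needs correcting. In your bound for $A$ in the first regime you assert that $(2^k|x-y|)^{-(N+1)}$ integrates, off $B(y,2Kh)$, to something $\lesssim 2^{-kN}$. In fact $\int_{|x-y|>2Kh}(2^k|x-y|)^{-(N+1)}\diff x\sim 2^{-kN}(2Kh2^k)^{-1}$, which is unbounded as $h2^k\to 0$ and would cost you a factor $(h2^k)^{-1}$. The fix is to use the uniform bound from (\ref{ptwise_convo_bound}) on the whole ball $|x-y|<2^{-k}$, which contains $B(y,2Kh)$ in this regime. This gives the majorant $C2^{kN}(h2^k)^{\omega}(1+2^k|x-y|)^{-(N+1)}$, exactly as the paper does and as you yourself do for $B$ with $g(t)=\min(1,t^{-(N+1)})$. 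Its $x$-integral is $\lesssim (h2^k)^{\omega}$, as required.
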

\begin{proof}
We consider two cases, depending on the relative size of $h$ and $k$, although both
use inequality (\ref{ptwise_convo_bound}).

{\em  Case 1:} If $2K h<2^{-k}$, then 
we can rewrite the bound (\ref{ptwise_convo_bound}) as
$$ \Bigl| \HF_k(x-y) - \sum_{\xi\in\Xi} a(\xi,y) \HF_k(x-\xi) \Bigr| \le C2^{kN}(h2^{k})^{\omega} (1+2^k|x-y|)^{-(N+1)}.$$
Integrating over $\R^N$ with respect to $x$ gives
\begin{eqnarray*}
\|(\tilde{\nu}-\tilde{\nu}_{\Xi})*\HF_k\|_{L_1(\R^N)}  
&\le&
\int_{\R^N} \int_{\Omega} |\nu(y) | 
\left| \HF_k(x-y) - \sum_{\xi\in\Xi} a(\xi,y) \HF_k(x-\xi) \right| \diff x \diff\sigma(y)\\
&\le& C  \|\nu\|_{L_{1}(\sigma)} \sup_{y\in \Omega} 
\int_{\R^N}  2^{kN}  (h2^{k})^{\omega} (1+ 2^k {|x-y|})^{-(N+1)}\diff x \\
&\le&
C (h2^{k})^{\omega} \|\nu\|_{L_{1}(\sigma)} .
\end{eqnarray*}
Integrating over $\Omega$ with respect to $y$ gives, 
after applying (\ref{integral_test}) with 
$a =2^k$, the estimate
\begin{eqnarray*}
\|(\tilde{\nu}-\tilde{\nu}_{\Xi})*\HF_k\|_{L_{\infty}(\R^N)} 
&\le & \sup_{x\in \R^N}
\int_{\Omega} |\nu(y) | 
\left| \HF_k(x-y) - \sum_{\xi\in\Xi} a(\xi,y) \HF_k(x-\xi) \right| \diff\sigma(y)\\
&\le& 
C 2^{kN} (h2^k)^{\omega}\|\nu\|_{L_{\infty}(\sigma)} \sup_{x\in \R^N} \int_{\Omega}  (1+ 2^k {|x-y|})^{-(N+1)}\diff \sigma(x) \\
&\le&
C  2^{k(N-d)}(h2^k)^{\omega}\|\nu\|_{L_{\infty}(\sigma)} .
\end{eqnarray*}
The estimate
$ \|(\tilde{\nu}-\tilde{\nu}_{\Xi})*\HF_k\|_{L_{p}(\R^N)}
\le C 2^{k\frac{N-d}{p'} } (h2^k)^{\omega} \|\nu\|_{L_{p}(\sigma)} 
$
 for $Kh<2^{-k}$ follows by interpolating between $p=1$ and $p=\infty$  and by applying  Riesz-Thorin 
 to the linear map $\nu\mapsto (\nu-\tilde{\nu}_{\Xi})*\HF_k$  .

 \medskip
 
{\em Case 2:} If $2Kh>2^{-k}$,
the estimate (\ref{ptwise_convo_bound}) is enough to produce the $L_{\infty}(\R^N)$ bound. It works as before.
We can control
$\|(\tilde{\nu}-\tilde{\nu}_{\Xi})*\HF_k\|_{L_{\infty}(\R^N)} $ 
by integrating over $\Omega$,
obtaining the expression
\begin{multline*}
\|(\tilde{\nu}-\tilde{\nu}_{\Xi})*\HF_k\|_{L_{\infty}(\R^N)}
\le\\
C 2^{kN}  \|\nu\|_{L_{\infty}(\sigma)}
 \sup_{x\in \R^N}  
  \left(
\sigma\bigl(B_{\Omega}(y,2Kh)\bigr)
+
\int_{\{y\in\Omega \mid2Kh<|y-x|\}}
(2^k {|x-y|})^{-(N+1)}\diff \sigma(x) 
\right).
\end{multline*}
The measure of the ball can be estimated by the 
assumption on $\sigma$, while the integral can 
be estimated by a series test, similar to (\ref{integral_test}):
decompose $\{x\in\Omega\mid |x-y|>2Kh\}=\bigcup_{j=1}^\infty A_j$
en annuli, with
$A_j := \{x\in \Omega \mid  2Kh 2^{j-1} \le |x-y|\le 2Kh 2^j\}$.
Then $x\in A_j$ satisfies $2^k |x-y| \ge 2^{j-1} 2Kh 2^k$,
so 
$$
\int_{\{y\in\Omega \mid2Kh<|y-x|\}}
(2^k {|x-y|})^{-(N+1)}\diff \sigma(x) 
\le \omega_{\Omega} (2Kh )^d (2Kh 2^k)^{-(N+1)}  \sum_{j=1}^{\infty} (2^{j-1})^{-(N+1)}2^{jd}.$$
The last series is convergent, since $N\ge d$, so we have
\begin{eqnarray}
\|(\tilde{\nu}-\tilde{\nu}_{\Xi})*\HF_k\|_{L_{\infty}(\R^N)} 
&\le &
C2^{k N}    \|\nu\|_{L_{\infty}(\sigma)} 
\bigl(
(2Kh)^d+(2Kh2^k)^{d-(N+1)}
\bigr) 
\nonumber\\
&\le& C2^{k N}   h^{d}  \|\nu\|_{L_{\infty)}(\sigma)} .
\label{inf_mix_scale}
\end{eqnarray}

To handle the $L_1(\R^N)$ norm, consider 
$\int_{\Omega}\nu(y) \psi_k(x-y)\diff \sigma(y)$
 and
$\sum  \int_{\Omega} \nu(y) a(\xi,y) \diff y \psi_k(x-y)$ 
separately.
For the first term, 
$\| \int_{\Omega} \nu(y) \HF_k(\cdot-y)\diff \sigma(y)\|_{L_1(\R^N) }\le C \|\nu\|_{L_1(\sigma)} $,
since a change of variable ensures
 $C=\|\HF_k\|_{L_1(\R^N)}=\|\HF\|_{L_1(\R^N)}$.
 Similarly, $\|\psi_k(x-\xi)\|_{L_1(\R^N)} \le C$, so we have
 $$\left\| 
 \Bigl(\sum_{\xi\in\Xi}  \int_{\Omega} \nu(y) a(\xi,y) \diff y \Bigr) \psi_k(\cdot-\xi)\right\|_{L_1(\R^N)}
 \le C\int_{\Omega}| \nu(y)| \sum |a(\xi,y) | \diff \sigma(y)\le C\|\nu\|_{L_1(\sigma)}.$$
Thus $\|(\nu-\tilde{\nu}_{\Xi})*\HF_k\|_{L_{1}(\R^N)}
\le \|\nu*\HF_k\|_{L_{1}(\R^N)} +\|\tilde{\nu}_{\Xi}*\HF_k\|_{L_{1}(\R^N)}\le C\|\nu\|_{L_1(\sigma)}$.

Interpolating between $p=1$ and the  $p=\infty$ estimate (\ref{inf_mix_scale}) gives the estimate  
$\|\tilde{\nu}_{\Xi}*\HF_k\|_{L_{p}(\R^N)} 
\le
C
  2^{\frac{kN}{p'} }   (h)^{\frac{d}{p'}}  \|\nu\|_{L_p(\sigma)}.
$
when $2Kh>2^{-k}$.
\end{proof}

This permits us to measure the approximation error $\nu-\tilde{\nu}_{\Xi}$ in the relevant Besov spaces $B_{p,q}^s(\R^N)$.
\begin{lemma}
\label{L_measure_approx}
Suppose $L$ is a positive integer
and suppose
$\Omega$ and $\sigma$ satisfy assumption \ref{ball_comparison}.
Then for any $p,q\in [1,\infty]$ and $s\in\R$ satisfying
$$\Bigl(\frac{d-N}{p'}-L,1\Bigr) \prec(s,q)\preceq \Bigl(- \frac{N}{p'},\infty\Bigr)$$  
there is a $C$
so  that if $a:\Xi\times \Omega\to \R$ is a 
local polynomial reproduction of degree 
$L-1$ (i.e., it satisfies Assumption \ref{A_LPR})
and  $\nu\in L_p(\sigma)$, then
$$\|\tilde{\nu}- \tilde{\nu}_{\Xi}\|_{B_{p,q}^s(\R^N)} \le C h^{ (d-N)/p' - s} \|\nu\|_{L_p(\sigma)}.$$
\end{lemma}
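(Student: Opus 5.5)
The plan is to assemble the Besov norm directly from its definition, splitting it into the low-frequency term and the dyadic high-frequency terms:
$$\|\tilde{\nu}-\tilde{\nu}_{\Xi}\|_{B_{p,q}^s(\R^N)}=\|\LF*(\tilde{\nu}-\tilde{\nu}_{\Xi})\|_{L_p(\R^N)}+\Bigl\|k\mapsto 2^{ks}\|\HF_k*(\tilde{\nu}-\tilde{\nu}_{\Xi})\|_{L_p(\R^N)}\Bigr\|_{\ell_q(\N)}.$$
The low-frequency term is handled by the first lemma of this appendix, which gives the bound $Ch^L\|\nu\|_{L_p(\sigma)}$. The lower constraint $(d-N)/p'-L\le s$ yields $L\ge (d-N)/p'-s$, so for $h<h_0\le 1$ we have $h^L\le C h^{(d-N)/p'-s}$. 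This term is therefore harmless.

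For the dyadic terms, I will fix $k^*$ with $2Kh2^{k^*}\sim 1$, so that $2^{k^*}\sim h^{-1}$, and split the sum into $k<k^*$ and $k\ge k^*$. The previous lemma supplies the estimate in each regime.

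\emph{Fine-scale part} ($k\ge k^*$). Here $\|\HF_k*(\cdot)\|_{L_p}\le C2^{kN/p'}h^{d/p'}\|\nu\|_{L_p(\sigma)}$. The sequence $2^{k(s+N/p')}h^{d/p'}$ for $k\ge k^*$ has geometric tail when $s<-N/p'$. Its $\ell_q$ norm is therefore comparable to its first term $h^{-(s+N/p')}h^{d/p'}=h^{(d-N)/p'-s}$, as desired. In the extremal case $s=-N/p'$ the sequence is constant and equal to $h^{d/p'}=h^{(d-N)/p'-s}$, and this is admissible only for $q=\infty$. This is exactly the upper constraint $(s,q)\preceq(-N/p',\infty)$.

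\emph{Coarse-scale part} ($k<k^*$). Here $\|\HF_k*(\cdot)\|_{L_p}\le C2^{k(N-d)/p'}(h2^k)^{\omega}\|\nu\|_{L_p(\sigma)}$ for any real $\omega\in[0,L]$. The summands become $h^{\omega}2^{k(s+(N-d)/p'+\omega)}$. I will choose $\omega$ so that the exponent $\gamma:=s+(N-d)/p'+\omega$ is positive, i.e.\ $\omega>(d-N)/p'-s$. This is possible with $\omega\le L$ precisely when $s>(d-N)/p'-L$. Then the sum over $k<k^*$ is dominated by its last term, giving
$$h^{\omega}2^{k^*\gamma}\sim h^{\omega-\gamma}=h^{(d-N)/p'-s}.$$
The borderline $s=(d-N)/p'-L$ forces $\omega=L$ and $\gamma=0$, which produces a constant sequence of length $\sim\log(1/h)$. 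Its $\ell_q$ norm stays bounded only if $q<\infty$ in a suitable sense. This must be matched carefully against the convention $\prec$ with $q=1$, and I will check which endpoint is actually included. Adding the two parts and the low-frequency term gives the claim.

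The main obstacle I expect is the bookkeeping at the two endpoints of the $\preceq$-interval. On the coarse side, the choice of $\omega$ and the borderline $\gamma=0$ need care, and I will restrict $\omega$ to $[0,L]$ and possibly lose the endpoint. On the fine side, the geometric tail degenerates at $s=-N/p'$. A secondary point is that the previous lemma's two regimes overlap at $2Kh2^k\approx1$; this only affects constants.
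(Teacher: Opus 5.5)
Your proposal is correct and follows essentially the same route as the paper. Both bound the low-frequency term by $h^L$, split the dyadic terms at $2^k\sim(2Kh)^{-1}$, use the two regimes of the preceding lemma, and sum geometric series; the paper takes $\omega=L$ on the coarse scales, reduces general $q$ to $q=1$ by embedding, and treats $(s,q)=(-N/p',\infty)$ separately with $\omega=d/p'$. The lower endpoint you worried about is not included: $((d-N)/p'-L,1)\prec(s,q)$ with $s=(d-N)/p'-L$ would require $q<1$, so choosing $\omega\in((d-N)/p'-s,L]$ always works and nothing is lost.
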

\begin{proof}
The condition that $L\ge (d-N)/p'-s$ ensures
$$
\| \LF*(\tilde{\nu}- \tilde{\nu}_{\Xi})\|_p
\le 
C h^{L}
 \|\nu\|_{L_p(\sigma)}
\le 
C h^{(d-N)/p' - s}  \|\nu\|_{L_p(\sigma)}
$$
for every $s$. Thus it suffices to treat the portion of the Besov
norm determined by the seminorms 
$\|\HF_k*(\tilde{\nu}-\tilde{\nu}_{\Xi})\|_p$.

We consider two cases: the strongest norm $B_{p,\infty}^{-N/p'}(\R^N)$
and the weaker norms $B_{p,q}^{s}(\R^N)$ with $(s,q)\precneqq (-N/p', \infty)$. The latter case
can be handled by considering $q=1$, since the continuous embedding $B_{p,1}^{s}(\R^N) \subset B_{p,q}^{s}(\R^N)$ holds
and the error estimate does not depend on $q$.

{\em Case 1} In this case $(s,q)= (-N/p', \infty)$.
We consider 
$\sup_{k\in \N} 2^{-kN/p'} \|\HF_k*(\tilde{\nu}-\tilde{\nu}_{\Xi})\|_p$, which we 
split over the initial indices $k\le  |\log_2 (2K h)|$ 
and the tail $k>|\log_2 (2K h)|$. 
For the tail,
we have
\begin{eqnarray*}
\sup_{k >|\log_2 (2Kh)|} 2^{-kN/p'} \|\HF_k*(\tilde{\nu}-\tilde{\nu}_{\Xi})\|_p
&\le& 
C
\|\nu\|_{L_p(\sigma)}
\sup_{k >|\log_2 (2Kh)|} 
2^{-kN/p'}  2^{kN/p'} h^{d/p'} \\
&\le& C\|\nu\|_{L_p(\sigma)}  h^{d/p'}  .
\end{eqnarray*}
For the initial part, we use  $\omega = d/p'$,
which gives
$2^{k(N-d)/p'} (h2^k)^{d/p'} = 2^{kN/p'} h^{d/p'}$.
Thus we have
$$\sup_{k \le |\log_2 (2Kh)|} 2^{-kN/p'} \|\HF_k*(\tilde{\nu}-\tilde{\nu}_{\Xi})\|_p
\le C\|\nu\|_{L_p(\sigma)}  h^{d/p'}  $$
as well.

\medskip

{\em Case 2} In this case $s<-N/p'$ and we consider 
$\sum_{k\in \N} 2^{ks } \|\HF_k*(\tilde{\nu}-\tilde{\nu}_{\Xi})\|_p$. Again, we
split this over the initial partial sum with $k\le |\log_2(2K h)| $
and the tail $k>|\log_2 (2Kh)| $. 

Since $s< -N/p'$, the tail is a convergent geometric series,
with 
\begin{eqnarray*}
\sum_{k>|\log_2 h| } 2^{ ks} \|\HF_k*(\tilde{\nu}-\tilde{\nu}_{\Xi})\|_p
&\le& C\|\nu\|_{L_p(\sigma)} \sum_{k>|\log_2 h| } 2^{ k(s +N/p')} h^{d/p'} \\
&\le& C h^{d/p'}\|\nu\|_{L_p(\sigma)} 2^{(s +N/p')\log_2(h)}.
\end{eqnarray*}
After simplifying, this gives
$\sum_{k>|\log_2 h| } 2^{ ks} \|\HF_k*(\tilde{\nu}-\tilde{\nu}_{\Xi})\|_p\le C\|\nu\|_{L_p(\sigma)}h^{(d-N)/p'  -s}$.

In the initial partial sum, we consider 
$  (d-N)/p' -s< L $, which yields a geometric series with positive exponent
\begin{eqnarray*}
\sum_{k\le |\log_2 (2Kh)| } 2^{ ks} \|\HF_k*(\tilde{\nu}-\tilde{\nu}_{\Xi})\|_p
&\le& C\|\nu\|_{L_p(\sigma)} \sum_{k\le |\log_2 (2K h)| } 2^{ k(s +(N-d)/p')} (h2^k)^{L} \\
&\le& 
C h^{L}\|\nu\|_{L_p(\sigma)} 2^{(s +(N-d)/p'+\omega)|\log_2(2Kh)|}\\
&\le& C  \|\nu\|_{L_p(\sigma)} h^{-s -(N-d)/p')}.
\end{eqnarray*}
The lemma follows by combining this with the estimate on the tail.
\end{proof}

\paragraph{Funding statement}
The work of GBW was partially supported by US National Science Foundation grants 2309712 and 2505987.

\paragraph{Data availability statement}
The point sets used in the numerical experiments are available from the \texttt{spherepts} package \url{https://github.com/gradywright/spherepts}.  The code for constructing the differentiation matrices for the numerical experiments from Section \ref{sec:egvl_numerics} is available from the KernelDMSuite package \url{https://github.com/gradywright/kerneldmsuite}.

\bibliographystyle{plain}
\bibliography{literature}

\end{document}